\theoremstyle{plain}
	\newtheorem{theorem}{Theorem}
	\newtheorem{proposition}[theorem]{Proposition}
	\newtheorem{lemma}[theorem]{Lemma}
\theoremstyle{definition}
	\newtheorem{definition}[theorem]{Definition}
	\newtheorem{notation}[theorem]{Notation}
	\newtheorem{example}[theorem]{Example}
\theoremstyle{remark}
	\newtheorem{remark}[theorem]{Remark}
\begin{document}

\title[Involutions on graded-division simple real algebras]{Classification of involutions on graded-division simple real algebras}

\author{Yuri Bahturin}
\address{Department of Mathematics and Statistics,
Memorial University of Newfoundland,
St. John's, NL, A1C5S7, Canada.}
\email{bahturin@mun.ca}

\author{Mikhail Kochetov}
\address{Department of Mathematics and Statistics,
Memorial University of Newfoundland,
St. John's, NL, A1C5S7, Canada.}
\email{mikhail@mun.ca}

\author{Adri\'an Rodrigo-Escudero}
\address{Departamento de Matem\'aticas e
Instituto Uni\-ver\-si\-ta\-rio de Matem\'aticas y Apli\-ca\-cio\-nes,
Universidad de Zaragoza, 50009 Zaragoza, Spain.}
\email{adrian.rodrigo.escudero@gmail.com}

\date{12 February 2018}
\subjclass[2010]{Primary 16W50, 16W10; secondary 16K20, 16S35.}
\keywords{Graded algebra; involution;
division grading; simple real algebra; classification.}
\thanks{\texttt{https://doi.org/10.1016/j.laa.2018.01.040}
This is the accepted manuscript
(accommodating the referee's suggestions)
of the article published in Linear Algebra and its Applications.}

\begin{abstract}
We classify,
up to isomorphism and up to equivalence,
involutions on
graded-division finite-dimensional
simple real (associative) algebras,
when the grading group is abelian.
\end{abstract}

\maketitle

\tableofcontents

\section{Introduction}\label{sect:Intr}

The study of gradings on various algebras has recently become
an active research field --- see the monograph \cite{EK-2013} 
and the references therein for an overview of this topic.
One of the milestone results in that monograph (following \cite{BZ-2007,BK-2010,Elduque-2010})
is the classification of gradings
on classical simple Lie algebras
over algebraically closed fields
of characteristic different from $2$.
It was achieved by first reducing the problem to the classification of
gradings on finite-dimensional simple associative algebras with involution 
(or, more generally, an antiautomorphism).

This was the main reason to write this article:
ultimately, we want to classify gradings on real Lie algebras,
and the first step in our approach is to study
involutions on graded-division real associative algebras.
In fact, we have already finished the classification of
gradings on classical central simple real Lie algebras
(except those of type $D_4$).
The results are to appear in a separate article
(see preprint \cite{BKR-2017}),
in which some of the arguments rely on this paper.
On the other hand, the classification of involutions (and related objects)
may be of independent interest.

Involutions on graded-division
finite-dimensional simple complex algebras
are classified in \cite[Propositions 2.51 and 2.53]{EK-2013}
(see also \cite{BZ-2006}).
In this paper we solve the real case.
As a prerequisite,
we need to know the classification of
division gradings on
finite-dimensional simple real algebras
(without involution).
This classification has been done in \cite{Rodrigo-2016},
both up to isomorphism and up to equivalence,
and independently in \cite{BZ-2016-a},
up to equivalence
(but note that one of the equivalence classes was overlooked).
A classification up to equivalence has been obtained
in \cite{BZ-2016-b} without assuming simplicity.

\medskip

The main objective of this work is to classify,
up to isomorphism and up to equivalence,
involutions on
graded-division
simple real associative algebras of finite dimension,
when the grading group is abelian.
We consider only abelian grading groups here
because of our intended applications: the support of
a grading on a simple Lie algebra
always generates an abelian subgroup of the grading group
(see for example \cite[Proposition 1.12]{EK-2013}).
Our main classification results are achieved in Sections
\ref{sect:Dim1}, \ref{sect:Dim2NonComplex},
\ref{sect:Dim2Complex} and \ref{sect:Dim4}.

\medskip

The paper is structured as follows.
We have collected the properties
that characterize involutions on
fi\-nite-di\-men\-sional simple real algebras
in Section \ref{sect:BackInv}.
Other preliminaries, such as the definitions of
isomorphism, equivalence and division grading,
can be found in Section \ref{sect:BackGrad},
together with the rest of terminology related to gradings
that we use in the paper.
Our main classification results are presented
in terms of quadratic forms on certain abelian groups 
and a similar kind of maps (which we call ``nice maps'').
These objects are introduced in Section~\ref{sect:QF}.

All homogeneous components of finite-dimensional
graded-division real algebras have the same dimension,
which can be $1$, $2$ or $4$, according to the identity component being 
the field of real numbers $\mathbb{R}$, the field of complex numbers $\mathbb{C}$ or 
the division algebra of quaternions $\mathbb{H}$.
In the case of dimension $2$, the identity component may or may not be contained in the 
center of the algebra.
Consequently, our classification results are arranged into four sections.
In Section \ref{sect:Dim1}, we classify
involutions on graded-division algebras
whose homogeneous components have dimension $1$.
In Section \ref{sect:Dim2Complex}, we consider
the case of dimension $2$ where the identity component is contained in the center, 
or, equivalently, the center is $\mathbb{C}$ with the trivial grading;
in this situation the algebra can be regarded
as a graded algebra over $\mathbb{C}$.
In Section \ref{sect:Dim2NonComplex},
we also study the case of dimension $2$,
but the identity component is not contained in the center.
Finally, the case of dimension $4$ is reduced
to the case of dimension $1$
thanks to the Double Centralizer Theorem,
as stated in Section \ref{sect:Dim4}.
Note that these four sections
are written as if they were very long theorems;
we have made an effort
to compile the classification to serve as a reference.

Section \ref{sect:GradQF} is written in the same style,
that is, as if it were a very long theorem, 
but its motivation is different.
Instead of classifying involutions,
we classify division gradings.
Moreover, the underlying algebra is not necessarily simple here.
The main goal of this section
is to classify all quadratic forms
that will appear in the following sections
and, in particular, establish their existence.
Thus, the logic of this section has 
the opposite direction as compared to the rest of the text.

As mentioned above,
we use the results of this paper
to classify gradings on classical real Lie algebras
in \cite{BKR-2017}.
There, in the case of outer gradings
on special linear Lie algebras (which belong to series $A$),
we have to deal with associative algebras
that are not simple, but simple as algebras with involution.
So, in Section \ref{sect:Semisimple} of this paper,
we extend a part of the results of the previous sections
to algebras whose center is isomorphic to $\mathbb{R}\times\mathbb{R}$.

Finally, in Section \ref{sect:Distinguished}, we discuss involutions with
special properties, which we call ``distinguished involutions''.
We use them in our preprint \cite{BKR-2017},
but they may also be of independent interest. For example,
in the situation of Section \ref{sect:Dim2Complex}, they allow us
to construct a special basis for a part of the graded-division algebra.

\section{Background on involutions}\label{sect:BackInv}

In this section we review
the basic properties of involutions on
fi\-nite-di\-men\-sional simple real algebras.
We will use \cite{KMRT-1998} as a reference.

An \emph{antiautomorphism} of an algebra $\mathcal{D}$ is
a map $ \varphi : \mathcal{D} \rightarrow \mathcal{D} $
which is an isomorphism of vector spaces and such that
$ \varphi(xy) = \varphi(y) \varphi(x) $ for all $ x,y \in \mathcal{D} $.
If it also satisfies $ \varphi^2(x) = x $ for all $ x \in \mathcal{D} $,
$\varphi$ is called an \emph{involution}.

Let $\varphi$ be an involution on a real algebra $\mathcal{D}$.
The center $Z(\mathcal{D})$ is preserved under $\varphi$,
so either the restriction of $\varphi$ to $Z(\mathcal{D})$ is the identity
and the involution is said to be \emph{of the first kind},
or this restriction has order $2$
and the involution is said to be \emph{of the second kind}.

Let $\mathbb{F}$ be either $\mathbb{R}$ or $\mathbb{C}$,
and let $V$ be an $\mathbb{F}$-vector space of dimension $n$.
An $\mathbb{F}$-bilinear form $ b : V \times V \rightarrow \mathbb{F} $ is called \emph{nonsingular} (or \emph{nondegenerate})
if the only element $ x \in V $ such that $ b(x,y) = 0 $ for all $ y \in V $ is $ x = 0 $.
The following is well known (see \cite[p.~1]{KMRT-1998}).
First, given one such $b$, there exists a unique map
$ \sigma_b : \mathrm{End}_{ \mathbb{F} }(V) \rightarrow \mathrm{End}_{ \mathbb{F} }(V) $
that satisfies the equation 
\[ b ( x , f(y) ) = b ( \sigma_b(f)(x) , y ) \]
for all $ x,y \in V $ and $ f \in \mathrm{End}_{ \mathbb{F} }(V) $.
Second, the map $ b \mapsto \sigma_b $ induces a bijective correspondence between
the classes of nonsingular $\mathbb{F}$-bilinear forms on $V$
that are either symmetric or skew-symmetric, up to a factor in $\mathbb{F}^{\times}$,
and involutions (of the first kind in the case $ \mathbb{F} = \mathbb{C} $)
on $ \mathrm{End}_{ \mathbb{F} }(V) $ ($ \cong M_n(\mathbb{F}) $).
The involutions that are adjoint to symmetric bilinear forms are called \emph{orthogonal},
while those that are adjoint to skew-symmetric bilinear forms are called \emph{symplectic}.

Let $\varphi$ be an orthogonal involution on $M_n(\mathbb{R})$,
and take a nonsingular symmetric bilinear form $b$ on a real vector space $V$
such that $\varphi$ corresponds to $\sigma_b$
via some isomorphism $ M_n(\mathbb{R}) \cong \mathrm{End}_{ \mathbb{R} }(V) $.
The number $m_+$ (respectively $m_-$) of positive (respectively negative) entries in a diagonalization of $b$
does not depend on the choice of the orthogonal basis.
Therefore, $ \vert m_+ - m_- \vert $ is an invariant of $\varphi$,
called its \emph{signature}.

An involution $\varphi$ on $M_n(\mathbb{H})$ is called \emph{orthogonal} or \emph{symplectic}
if so is its complexification $ \varphi \otimes_{\mathbb{R}} \mathrm{id}_{ \mathbb{C} } $.
We will use the following characterization (\cite[Proposition 2.6]{KMRT-1998}).
Let $\mathcal{D}$ be a finite-dimensional simple real algebra,
and let $\varphi$ be an involution on $\mathcal{D}$
(of the first kind if $ \mathcal{D} \cong M_n(\mathbb{C}) $);
then $\varphi$ is orthogonal if and only if the dimension of
$ \{ x \in \mathcal{D} \mid \varphi(x) = +x \} $
is greater than the dimension of
$ \{ x \in \mathcal{D} \mid \varphi(x) = -x \} $,
while it is symplectic if and only if it is smaller.

Let $\mathbb{D}$ be either $\mathbb{H}$ or $\mathbb{C}$,
let $V$ be a right $\mathbb{D}$-vector space of dimension $n$,
and denote by $\overline{x}$ the conjugate of $x$ in $\mathbb{D}$.
A \emph{hermitian form} on $V$ is an $\mathbb{R}$-bilinear map $ h : V \times V \rightarrow \mathbb{D} $
such that, for all $ x,y \in V $ and $ a,b \in \mathbb{D} $, we have:
(1) $ h(xa,yb) = \overline{a} h(x,y) b $ and 
(2) $ h(y,x) = \overline{h(x,y)} $.
The form is called \emph{skew-hermitian} if condition (2) is replaced by:
(2') $ h(y,x) = - \overline{h(x,y)} $.
Thus, these forms are \emph{sesquilinear}: linear in the second variable and semilinear in the first.
If we take $\mathbb{D}=\mathbb{R}$ (with $\overline{x}=x$) then we recover the definitions of symmetric and 
skew-symmetric forms.

A hermitian or skew-hermitian form $h$ is called \emph{nonsingular}
if the only element $ x \in V $ such that $ h(x,y) = 0 $ for all $ y \in V $ is $ x = 0 $.
It is well known (see \cite[Proposition 4.1]{KMRT-1998}) that,
given one such $h$, there exists a unique map
$ \sigma_h : \mathrm{End}_{ \mathbb{D} }(V) \rightarrow \mathrm{End}_{ \mathbb{D} }(V) $
that satisfies the equation 
\begin{equation}\label{eq:inv_by_h} 
h ( x , f(y) ) = h ( \sigma_h(f)(x) , y ) 
\end{equation}
for all $ x,y \in V $ and $ f \in \mathrm{End}_{ \mathbb{D} }(V) $.
Also, by \cite[Theorem 4.2]{KMRT-1998}, we have the following.
\begin{itemize}
\item In the case $ \mathbb{D} = \mathbb{H} $,
the map $ h \mapsto \sigma_h $ defines a bijective correspondence between
the classes of nonsingular hermitian (respectively skew-hermitian) forms on $V$, up to a factor in $\mathbb{R}^{\times}$,
and symplectic (respectively orthogonal) involutions on $ \mathrm{End}_{ \mathbb{H} }(V) $ ($ \cong M_n(\mathbb{H}) $).
\item In the case $ \mathbb{D} = \mathbb{C} $,
the map $ h \mapsto \sigma_h $ defines a bijective correspondence between
the classes of nonsingular hermitian forms on $V$, up to a factor in $\mathbb{R}^{\times}$,
and involutions of the second kind on $ \mathrm{End}_{ \mathbb{C} }(V) $ ($ \cong M_n(\mathbb{C}) $).
\end{itemize}

For a symplectic involution on $M_n(\mathbb{H})$ or an involution of the second kind on $M_n(\mathbb{C})$,
we define, in the same way as in the case of orthogonal involutions on $M_n(\mathbb{R})$,
the \emph{signature} as the absolute value of
the difference between the number of positive and negative entries in
any diagonalization of any adjoint hermitian form.

Finally, let us also state a couple of lemmas
for future reference.

\begin{lemma}\label{lem:SignReal}
Let $\varphi_1$ be an orthogonal
involution on $M_{n_1}(\mathbb{R})$.
Let $\mathbb{D}$ be $\mathbb{R}$
(respectively $\mathbb{H}$, $\mathbb{C}$),
and let $\varphi_2$ be an orthogonal
(respectively symplectic, second kind)
involution on $M_{n_2}(\mathbb{D})$.
Then $ \varphi_1 \otimes_{\mathbb{R}} \varphi_2 $ is an orthogonal
(respectively symplectic, second kind) involution on
$ M_{n_1}(\mathbb{R}) \otimes_{\mathbb{R}} M_{n_2}(\mathbb{D}) $,
and its signature is the product of the signatures of
$\varphi_1$ and $\varphi_2$.
\end{lemma}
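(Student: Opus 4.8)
The plan is to realize both involutions as adjoints of forms and to build the adjoint form of the tensor product as the tensor product of the two forms. First I would write $M_{n_1}(\mathbb{R}) \cong \mathrm{End}_{\mathbb{R}}(V_1)$ and $M_{n_2}(\mathbb{D}) \cong \mathrm{End}_{\mathbb{D}}(V_2)$, where $V_1$ is a real vector space of dimension $n_1$ and $V_2$ is a right $\mathbb{D}$-vector space of dimension $n_2$. By the results recalled above, $\varphi_1 = \sigma_{b_1}$ for a nonsingular symmetric bilinear form $b_1$ on $V_1$, and $\varphi_2 = \sigma_{f_2}$, where $f_2$ is a nonsingular symmetric bilinear form $b_2$ on $V_2$ when $\mathbb{D} = \mathbb{R}$, and a nonsingular hermitian form $h_2$ on $V_2$ when $\mathbb{D} \in \{\mathbb{H}, \mathbb{C}\}$ (recall that symplectic involutions over $\mathbb{H}$ and second-kind involutions over $\mathbb{C}$ are exactly those adjoint to hermitian forms). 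Setting $V := V_1 \otimes_{\mathbb{R}} V_2$, regarded as a right $\mathbb{D}$-vector space through the action on the second tensor factor, a choice of $\mathbb{R}$-basis of $V_1$ identifies $V$ with $V_2^{\,n_1}$ and hence yields $\mathrm{End}_{\mathbb{D}}(V) \cong \mathrm{End}_{\mathbb{R}}(V_1) \otimes_{\mathbb{R}} \mathrm{End}_{\mathbb{D}}(V_2)$; under this identification $\varphi_1 \otimes_{\mathbb{R}} \varphi_2$ acts on $\mathrm{End}_{\mathbb{D}}(V)$.

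Next I would define $H$ on $V$ by $H(x_1 \otimes x_2, y_1 \otimes y_2) = b_1(x_1, y_1) f_2(x_2, y_2)$ and check that it is the adjoint form we need. Because $b_1$ is $\mathbb{R}$-valued and $\mathbb{R}$ is central in $\mathbb{D}$, one verifies directly that $H$ is sesquilinear and satisfies $H(w, v) = \overline{H(v, w)}$; thus $H$ is hermitian when $\mathbb{D} \in \{\mathbb{H}, \mathbb{C}\}$ and symmetric bilinear when $\mathbb{D} = \mathbb{R}$. Nonsingularity of $H$ follows from that of $b_1$ and $f_2$. Evaluating on decomposable endomorphisms and using the defining equation \eqref{eq:inv_by_h} for each factor gives $H(x_1 \otimes x_2, (g_1 \otimes g_2)(y_1 \otimes y_2)) = H\bigl((\sigma_{b_1}(g_1) \otimes \sigma_{f_2}(g_2))(x_1 \otimes x_2), y_1 \otimes y_2\bigr)$, so by uniqueness of the adjoint $\sigma_H = \sigma_{b_1} \otimes \sigma_{f_2} = \varphi_1 \otimes_{\mathbb{R}} \varphi_2$. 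Since $H$ is symmetric (when $\mathbb{D} = \mathbb{R}$) or hermitian (when $\mathbb{D} \in \{\mathbb{H}, \mathbb{C}\}$), the correspondences recalled above identify $\varphi_1 \otimes_{\mathbb{R}} \varphi_2$ as orthogonal, symplectic, or of the second kind, respectively, proving the first assertion.

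For the signature, I would diagonalize. By Sylvester's law of inertia (valid for real symmetric forms and for hermitian forms over $\mathbb{C}$ and $\mathbb{H}$, whose diagonal values are real), after scaling choose an orthogonal $\mathbb{R}$-basis $\{e_i\}$ of $V_1$ with $b_1(e_i, e_i) = \epsilon_i \in \{+1, -1\}$ and an orthogonal $\mathbb{D}$-basis $\{f_j\}$ of $V_2$ with $f_2(f_j, f_j) = \delta_j \in \{+1, -1\}$; let $m_1^{\pm}$ and $m_2^{\pm}$ be the numbers of $\pm 1$'s, so that the signatures of $\varphi_1$ and $\varphi_2$ are $\vert m_1^+ - m_1^- \vert$ and $\vert m_2^+ - m_2^- \vert$. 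Then $\{e_i \otimes f_j\}$ is an orthogonal basis of $V$ for $H$ with $H(e_i \otimes f_j, e_i \otimes f_j) = \epsilon_i \delta_j$, whence the signature of $\varphi_1 \otimes_{\mathbb{R}} \varphi_2$ equals
\[
\bigl\vert (m_1^+ m_2^+ + m_1^- m_2^-) - (m_1^+ m_2^- + m_1^- m_2^+) \bigr\vert = \vert m_1^+ - m_1^- \vert \, \vert m_2^+ - m_2^- \vert ,
\]
which is the product of the two signatures.

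The routine points are the verification that $H$ is sesquilinear and nonsingular and the identification of the endomorphism algebras; the one place demanding genuine care is the noncommutativity of $\mathbb{H}$, where I must use that $b_1$ takes central ($\mathbb{R}$-valued) entries both to keep $H$ sesquilinear and to ensure that rescaling a basis vector $f_j$ by $a \in \mathbb{D}^{\times}$ multiplies $f_2(f_j, f_j)$ by the positive real $\overline{a}a$, so that the normalization to $\pm 1$ and the inertia count are legitimate. Once this is in place, everything else reduces to the bookkeeping displayed above.
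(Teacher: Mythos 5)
Your proposal is correct and follows essentially the same route as the paper's proof: realize $\varphi_1$ and $\varphi_2$ as adjoints of a symmetric bilinear form $b_1$ and a (symmetric or hermitian) form on $V_2$, identify $\mathrm{End}_{\mathbb{R}}(V_1)\otimes_{\mathbb{R}}\mathrm{End}_{\mathbb{D}}(V_2)$ with $\mathrm{End}_{\mathbb{D}}(V_1\otimes_{\mathbb{R}}V_2)$, show the tensor product form is adjoint to $\varphi_1\otimes_{\mathbb{R}}\varphi_2$, and conclude the signature claim by counting signs on a tensor product of orthogonal bases. You have simply written out in full the verifications (sesquilinearity via centrality of $\mathbb{R}$-values, normalization of diagonal entries over $\mathbb{H}$, the inertia count) that the paper leaves as a ``straightforward combinatorial fact.''
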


\begin{proof}
Assume that $\varphi_1$ is adjoint to the bilinear form
$ b_1 : V_1 \times V_1 \rightarrow \mathbb{R} $
and $\varphi_2$ is adjoint to the hermitian form
$ h_2 : V_2 \times V_2 \rightarrow \mathbb{D} $.
Note that we have the natural isomorphism of real algebras:
\[
\mathrm{End}_{ \mathbb{R} }(V_1)
\otimes_{\mathbb{R}}
\mathrm{End}_{ \mathbb{D} }(V_2)
\cong
\mathrm{End}_{ \mathbb{D} }( V_1 \otimes_{\mathbb{R}} V_2 ).
\]
Through these identifications,
$ b_1 \otimes_{\mathbb{R}} h_2 $ is a hermitian form on
$ V_1 \otimes_{\mathbb{R}} V_2 $ adjoint to
$ \varphi_1 \otimes_{\mathbb{R}} \varphi_2 $.
Picking orthogonal bases in $V_1$ and $V_2$, we reduce the proof
to a straightforward combinatorial fact.
\end{proof}

\begin{lemma}\label{lem:SignCompl}
Let $\varphi_1$ and $\varphi_2$ be second kind involutions
on $M_{n_1}(\mathbb{C})$ and $M_{n_2}(\mathbb{C})$.
Then there is a unique second kind involution on
$ M_{n_1}(\mathbb{C}) \otimes_{\mathbb{C}} M_{n_2}(\mathbb{C}) $
that sends $ X_1 \otimes_{\mathbb{C}} X_2 $ to
$ \varphi_1(X_1) \otimes_{\mathbb{C}} \varphi_2(X_2) $;
we denote this map by $ \varphi_1 \otimes_{\mathbb{C}} \varphi_2 $.
Moreover, its signature is the product of
the signatures of $\varphi_1$ and $\varphi_2$.
\end{lemma}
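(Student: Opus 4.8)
The plan is to first establish the existence, uniqueness, and second-kind property of the map, and then compute the signature by passing to adjoint hermitian forms.

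For existence, I would define $\psi$ on the simple tensors by the prescribed formula and check that it descends to a well-defined map on $M_{n_1}(\mathbb{C}) \otimes_{\mathbb{C}} M_{n_2}(\mathbb{C})$. The only nontrivial point is compatibility with the balancing relation $(cX_1) \otimes_{\mathbb{C}} X_2 = X_1 \otimes_{\mathbb{C}} (cX_2)$ for $c \in \mathbb{C}$. Here I would use that both $\varphi_i$ are of the second kind, hence conjugate-linear over $\mathbb{C}$: since a central scalar satisfies $\varphi_i(cX_i) = \varphi_i(X_i)\varphi_i(cI) = \overline{c}\,\varphi_i(X_i)$, applying $\psi$ to the two sides of the balancing relation produces $\overline{c}\,\varphi_1(X_1) \otimes_{\mathbb{C}} \varphi_2(X_2)$ and $\varphi_1(X_1) \otimes_{\mathbb{C}} \overline{c}\,\varphi_2(X_2)$, which agree because the tensor product is over $\mathbb{C}$. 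This cancellation of the two conjugations is exactly the step where the hypothesis that \emph{both} involutions are of the second kind is indispensable, and it is the conceptual heart of the statement; it is also what distinguishes this lemma from Lemma \ref{lem:SignReal}. That $\psi$ reverses products and squares to the identity follows termwise from the corresponding properties of $\varphi_1$ and $\varphi_2$, so $\psi$ is an involution; evaluating it on $c(I \otimes_{\mathbb{C}} I) = (cI) \otimes_{\mathbb{C}} I$ gives $\overline{c}(I \otimes_{\mathbb{C}} I)$, so $\psi$ restricts to complex conjugation on the center $\mathbb{C}$ and is therefore of the second kind. Uniqueness is immediate: every element of the tensor product is a finite sum of simple tensors, on which the formula prescribes the values of $\psi$, so any additive map obeying the formula is determined.

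For the signature, I would realize $\varphi_i = \sigma_{h_i}$ for a nonsingular hermitian form $h_i$ on a right $\mathbb{C}$-vector space $V_i$ of dimension $n_i$, which is possible by the classification recalled above. Under the identification $\mathrm{End}_{\mathbb{C}}(V_1) \otimes_{\mathbb{C}} \mathrm{End}_{\mathbb{C}}(V_2) \cong \mathrm{End}_{\mathbb{C}}(V_1 \otimes_{\mathbb{C}} V_2)$, I claim that $\psi$ is adjoint to the hermitian form $h_1 \otimes h_2$ given on simple tensors by $(h_1 \otimes h_2)(x_1 \otimes x_2, y_1 \otimes y_2) = h_1(x_1, y_1)\,h_2(x_2, y_2)$. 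One checks directly that this is sesquilinear and hermitian, and that it verifies the defining equation \eqref{eq:inv_by_h} for $\sigma_{h_1} \otimes_{\mathbb{C}} \sigma_{h_2}$ by factoring the adjoint relations for $h_1$ and $h_2$ through the product; the uniqueness of the adjoint then yields $\sigma_{h_1 \otimes h_2} = \psi$.

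Finally, choosing orthogonal bases of $V_1$ and $V_2$ that diagonalize $h_1$ and $h_2$, their tensor products form an orthogonal basis diagonalizing $h_1 \otimes h_2$, with each diagonal entry equal to the product of the corresponding diagonal entries of $h_1$ and $h_2$. Writing $p_i, q_i$ for the numbers of positive and negative entries of $h_i$, the form $h_1 \otimes h_2$ has $p_1 p_2 + q_1 q_2$ positive and $p_1 q_2 + q_1 p_2$ negative entries, whose difference factors as $(p_1 - q_1)(p_2 - q_2)$; taking absolute values gives the product of the signatures. As with Lemma \ref{lem:SignReal}, this last step is a routine combinatorial computation, and the only genuine obstacle is the well-definedness over $\mathbb{C}$ addressed in the first paragraph.
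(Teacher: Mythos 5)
Your proof is correct and takes essentially the same route as the paper's: your direct check that the two conjugations cancel against the $\mathbb{C}$-balancing relation $(cX_1)\otimes_{\mathbb{C}} X_2 = X_1\otimes_{\mathbb{C}}(cX_2)$ is precisely the content of the paper's construction via the conjugate spaces $\overline{M_{n_i}(\mathbb{C})}$, in which the semilinear maps $\varphi_i$ become $\mathbb{C}$-linear and can be tensored. Your signature computation via the tensor product of adjoint hermitian forms, orthogonal bases, and the count $(p_1-q_1)(p_2-q_2)$ is likewise the argument the paper invokes by declaring the rest of the proof analogous to that of Lemma \ref{lem:SignReal}.
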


\begin{proof}
Let us just recall the well known construction of
$ \varphi_1 \otimes_{\mathbb{C}} \varphi_2 $,
because the rest of the proof is analogous to
the proof of Lemma \ref{lem:SignReal}.
We can consider the $\mathbb{C}$-vector space
$ \overline{ M_{n_i}(\mathbb{C}) } $,
which has the same underlying abelian group as
$M_{n_i}(\mathbb{C})$,
but a twisted scalar multiplication $*$ given by
$ \alpha * X := \overline{\alpha} X $.
If we denote by $\overline{\varphi_i}$ the map $\varphi_i$
viewed as a map from $M_{n_i}(\mathbb{C})$ to
$ \overline{ M_{n_i}(\mathbb{C}) } $,
then $\overline{\varphi_i}$ is $\mathbb{C}$-linear.
Therefore, we have the $\mathbb{C}$-linear map:
\[
\overline{\varphi_1}
\otimes_{\mathbb{C}}
\overline{\varphi_2}
:
M_{n_1}(\mathbb{C})
\otimes_{\mathbb{C}}
M_{n_2}(\mathbb{C})
\longrightarrow
\overline{ M_{n_1}(\mathbb{C}) }
\otimes_{\mathbb{C}}
\overline{ M_{n_2}(\mathbb{C}) }.
\]
On the other hand,
we have a natural $\mathbb{C}$-linear isomorphism:
\[
\overline{ M_{n_1}(\mathbb{C}) }
\otimes_{\mathbb{C}}
\overline{ M_{n_2}(\mathbb{C}) }
\longrightarrow
\overline{
	M_{n_1}(\mathbb{C})
	\otimes_{\mathbb{C}}
	M_{n_2}(\mathbb{C})
	}.
\]
Finally, $ \varphi_1 \otimes_{\mathbb{C}} \varphi_2 $ is the $\mathbb{C}$-semilinear map
corresponding to the composition of the two maps above, and it sends
$ X_1 \otimes_{\mathbb{C}} X_2 $ to
$ \varphi_1(X_1) \otimes_{\mathbb{C}} \varphi_2(X_2) $.
\end{proof}

\section{Background on gradings}\label{sect:BackGrad}

In this section we review, following \cite{EK-2013}, the basic definitions and properties of gradings that will be used in the rest of the paper.
Here we only deal with associative algebras.

\begin{definition}
Let $\mathcal{D}$ be an algebra over a field $\mathbb{F}$, and let $G$ be a group.
A \emph{$G$-grading} $\Gamma$ on $\mathcal{D}$ is a decomposition of $\mathcal{D}$ into a direct sum of subspaces indexed by $G$,
\[ \Gamma : \mathcal{D} = \bigoplus_{ g \in G } \mathcal{D}_g, \]
such that, for all $ g,h \in G$, we have
\[ \mathcal{D}_g \mathcal{D}_h \subseteq \mathcal{D}_{gh}. \]
If such a decomposition is fixed, we refer to $\mathcal{D}$ as a \emph{$G$-graded algebra}.
The \emph{support} of $\Gamma$ (or of $\mathcal{D}$) is the set $ \mathrm{supp} (\Gamma) := \{ g \in G \mid \mathcal{D}_g \neq 0 \} $.
If $ X \in \mathcal{D}_g $, then we say that $X$ is \emph{homogeneous of degree $g$}, and we write $ \deg (X) = g $.
The subspace $\mathcal{D}_g$ is called the \emph{homogeneous component of degree $g$}.
\end{definition}

Note that, if $\mathcal{D}$ is a $G$-graded algebra and $\mathcal{D}'$ is an $H$-graded algebra, then the tensor product $ \mathcal{D} \otimes \mathcal{D}' $ has a natural $ G \times H $-grading given by $ ( \mathcal{D} \otimes \mathcal{D}' )_{(g,h)} = \mathcal{D}_g \otimes \mathcal{D}'_h $, for all $ g \in G $, $ h \in H $.
This will be called the \emph{product grading}.

A subspace $\mathcal{F}$ (in particular, a subalgebra or an ideal) of a $G$-graded algebra $\mathcal{D}$ is said to be \emph{graded} 
if $ \mathcal{F} = \bigoplus_{ g \in G } ( \mathcal{D}_g \cap \mathcal{F} ) $.

There are two natural ways to define an equivalence relation on group gradings, depending on whether the grading group plays a secondary role or is a part of the definition.

\begin{definition}\label{def:EquivGrad}
Let $\Gamma$ be a $G$-grading on the algebra $\mathcal{D}$ and let $\Gamma'$ be an $H$-grading on the algebra $\mathcal{D}'$.
We say that $\Gamma$ and $\Gamma'$ are \emph{equivalent} if there exist an isomorphism of algebras $ \psi : \mathcal{D} \rightarrow \mathcal{D}' $ and a bijection $ \alpha : \mathrm{supp} (\Gamma) \rightarrow \mathrm{supp} (\Gamma') $ such that $ \psi (\mathcal{D}_t) = \mathcal{D}'_{\alpha(t)} $ for all $ t \in \mathrm{supp} (\Gamma) $.
\end{definition}

\begin{definition}
Let $\Gamma$ and $\Gamma'$ be $G$-gradings on the algebras $\mathcal{D}$ and $\mathcal{D}'$, respectively.
We say that $\Gamma$ and $\Gamma'$ are \emph{isomorphic} if there exists an isomorphism of algebras $ \psi : \mathcal{D} \rightarrow \mathcal{D}' $ such that $ \psi (\mathcal{D}_g) = \mathcal{D}'_g $ for all $ g \in G $.
\end{definition}

\begin{definition}
Given gradings on the same algebra, 
$ \Gamma : \mathcal{D} = \bigoplus_{ g \in G } \mathcal{D}_g $ and $ \Gamma' : \mathcal{D} = \bigoplus_{ h \in H } \mathcal{D}'_h $, 
we say that $\Gamma'$ is a \emph{coarsening} of $\Gamma$, or that $\Gamma$ is a \emph{refinement} of $\Gamma'$, 
if, for any $ g \in G $, there exists $ h \in H $ such that $ \mathcal{D}_g \subseteq \mathcal{D}'_h $.
If, for some $ g \in G $, this inclusion is strict, then we will speak of a \emph{proper} refinement or coarsening.
A grading is said to be \emph{fine} if it does not admit a proper refinement.
\end{definition}

\begin{definition}
A graded algebra is said to be a \emph{graded division algebra} if it is unital and every nonzero homogeneous element has an inverse.
In this case, the grading will be called a \emph{division grading}.
\end{definition}

If $\mathcal{D}$ is a $G$-graded division algebra, then $ I \in \mathcal{D}_e $, where $e$ is the identity element of $G$ and $I$ the unity of $\mathcal{D}$.
Also, if $ 0 \neq X \in \mathcal{D}_g $, then $ X^{-1} \in \mathcal{D}_{g^{-1}} $.
Therefore, the support of $\mathcal{D}$ is a subgroup of $G$, since whenever $ \mathcal{D}_g \neq 0 $ and $ \mathcal{D}_h \neq 0 $, we also have $ 0 \neq \mathcal{D}_g \mathcal{D}_h \subseteq \mathcal{D}_{gh} $ and $ \mathcal{D}_{g^{-1}} \neq 0 $.
This also shows that, in the situation of Definition \ref{def:EquivGrad}, if $\Gamma$ and $\Gamma'$ are division gradings, then $ \alpha : \mathrm{supp} (\Gamma) \rightarrow \mathrm{supp} (\Gamma') $ is a homomorphism of groups.

The identity component $ \mathcal{D}_e $ of a graded division algebra $\mathcal{D}$ is a division algebra.
Also, if $ X_t \in \mathcal{D}_t $ is nonzero, then $ \mathcal{D}_t = \mathcal{D}_e X_t $. 
Therefore, all the (nonzero) homogeneous components of the grading have the same dimension.
In our case $\mathcal{D}$ will be finite-dimensional and the ground field will be $\mathbb{R}$,
so this dimension must be 1, 2 or 4 depending on whether $ \mathcal{D}_e $ is isomorphic to $ \mathbb{R} $, $ \mathbb{C} $ or $ \mathbb{H} $.

\begin{definition}
Let $\mathcal{D}$ be a $G$-graded algebra.
A map $ \varphi : \mathcal{D} \rightarrow \mathcal{D} $ is said to be an
\emph{antiautomorphism of the $G$-graded algebra $\mathcal{D}$}
if it is an isomorphism of vector spaces such that
$ \varphi(XY) = \varphi(Y) \varphi(X) $ for all $ X,Y \in \mathcal{D} $
and $ \varphi(\mathcal{D}_g) = \mathcal{D}_g $ for all $ g \in G $.
If it also satisfies $ \varphi^2(X) = X $ for all $ X \in \mathcal{D} $,
$\varphi$ is called an \emph{involution} of the $G$-graded algebra $\mathcal{D}$.
\end{definition}

\begin{definition}
Let $\Gamma$ and $\Gamma'$ be gradings on
the algebras $\mathcal{D}$ and $\mathcal{D}'$.
Let $ \varphi : \mathcal{D} \rightarrow \mathcal{D} $
and $ \varphi' : \mathcal{D}' \rightarrow \mathcal{D}' $
be antiautomorphisms of graded algebras.
We say that $(\Gamma,\varphi)$ is \emph{isomorphic}
(respectively \emph{equivalent})
to $(\Gamma',\varphi')$ if there exists an isomorphism
(respectively equivalence) of graded algebras
$ \psi : \mathcal{D} \rightarrow \mathcal{D}' $
such that $ \varphi' = \psi \varphi \psi^{-1} $.
\end{definition}

We will use the following result \cite[Lemma 3.3]{Elduque-2010}.
\begin{lemma}\label{lem:InnAut}
Let $\mathcal{D}$ be an algebra endowed with
a division grading by an abelian group $G$.
Let $ X \in \mathcal{D}^{\times} $ and consider
the corresponding inner automorphism
$ \mathrm{Int}(X) : \mathcal{D} \rightarrow \mathcal{D} $
given by $ \mathrm{Int}(X)(Y) = X Y X^{-1} $.
If $ \mathrm{Int}(X)(\mathcal{D}_g)
\subseteq \mathcal{D}_g $ for all $ g \in G $,
then there exists a nonzero homogeneous $ X_0 \in \mathcal{D} $
such that $ \mathrm{Int}(X) = \mathrm{Int}(X_0) $.
\end{lemma}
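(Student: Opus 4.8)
The plan is to show that a single homogeneous component of $X$ already implements the same inner automorphism as $X$ itself, so no genuinely new element needs to be constructed. First I would write $X = \sum_{s \in S} X_s$, where $S$ is the (necessarily finite) set of degrees occurring in $X$ and each $X_s \in \mathcal{D}_s \setminus \{0\}$. Since $\mathcal{D}$ carries a division grading, every such nonzero homogeneous $X_s$ is invertible, so $\mathrm{Int}(X_s)$ is well defined. My claim is then that $X_0 := X_s$ works for \emph{any} choice of $s \in S$.

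The key computation is to compare homogeneous components in the identity $X Y = \mathrm{Int}(X)(Y)\, X$. Fix a homogeneous $Y \in \mathcal{D}_g$ and set $Y' := \mathrm{Int}(X)(Y) = X Y X^{-1}$; by hypothesis $Y' \in \mathcal{D}_g$. Expanding both sides of $XY = Y'X$ gives $\sum_{s \in S} X_s Y = \sum_{s \in S} Y' X_s$. Here I would invoke the hypothesis that $G$ is abelian: both $X_s Y$ and $Y' X_s$ lie in $\mathcal{D}_{sg}$ (using $sg = gs$), and the degrees $sg$ are pairwise distinct as $s$ ranges over $S$. Matching the component of degree $sg$ on each side therefore yields $X_s Y = Y' X_s$ for every $s \in S$, crucially with the \emph{same} $Y'$ throughout. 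This says exactly that $\mathrm{Int}(X_s)(Y) = Y' = \mathrm{Int}(X)(Y)$ for every homogeneous $Y$; since homogeneous elements span $\mathcal{D}$ and both maps are linear, $\mathrm{Int}(X_s) = \mathrm{Int}(X)$, which is the desired conclusion with $X_0 = X_s$.

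The one point I would watch carefully is the termwise comparison of the two homogeneous expansions: it relies on the injectivity of $s \mapsto sg$ (so distinct $X_s$ contribute to distinct degrees) and on the abelian assumption to align $\mathcal{D}_{sg}$ with $\mathcal{D}_{gs}$. Beyond this bookkeeping I do not expect any real obstacle; in particular, the argument uses neither simplicity nor finite-dimensionality of $\mathcal{D}$, only that the homogeneous decomposition of $X$ is finite, which is automatic.
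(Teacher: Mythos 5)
Your proof is correct and is essentially the paper's own argument: expand $X$ into homogeneous components, use the hypothesis that $\mathrm{Int}(X)$ preserves each $\mathcal{D}_g$ together with commutativity of $G$ to match homogeneous components in $XY = \mathrm{Int}(X)(Y)\,X$, and invoke invertibility of nonzero homogeneous elements. The only (harmless) difference is that you observe every component $X_s$ works, whereas the paper just extracts the conclusion for a single component $X_0$.
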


\begin{proof}
Let $ \psi := \mathrm{Int}(X) $ and note that
$ \psi(Y)X = XY $ for all $ Y \in \mathcal{D} $.
Write $ X = X_0 + \ldots + X_n $ where the $X_i$ are nonzero
homogeneous elements of pairwise different degrees $g_i$.
If $ Y \in \mathcal{D} $ is homogeneous of degree $g$,
so is $\psi(Y)$.
Since $G$ is abelian,
if we consider the terms of degree
$ g g_0 $ in the equation $ \psi(Y)X = XY $,
we get $ \psi(Y) X_0 = X_0 Y $.
But $X_0$ is invertible because it is homogeneous,
so $ \psi(Y) = X_0 Y X_0^{-1} $
for all $ Y \in \mathcal{D}_g $ and $ g \in G $.
\end{proof}

\section{Quadratic forms}\label{sect:QF}

In this section we introduce the necessary terminology concerning quadratic forms on certain abelian groups, 
mainly following \cite[Section 4]{Rodrigo-2016}.
That article established a correspondence between isomorphism classes of division gradings
and quadratic forms that are regular in the sense that usually appears in the literature.
In this paper, however, we deal with quadratic forms that satisfy less restrictive conditions of regularity.
The notation here is congruent with \cite{Rodrigo-2016},
but note that now we do not require quadratic forms and nice maps to be defined on elementary abelian $2$-groups.

\begin{definition}
Let $T$ be a finite abelian group.
In this article, by an \emph{alternating bicharacter} on $T$ we will mean a map $ \beta : T \times T \rightarrow \mathbb{R}^{\times} $ that satisfies
$ \beta(uv,w) = \beta(u,w) \beta(v,w) $, $ \beta(u,vw) = \beta(u,v) \beta(u,w) $, and $ \beta(u,u) = 1 $ for all $ u,v,w \in T $.
(It follows that $\beta$ takes values in $\{ \pm 1 \}$.)
If we have to consider an alternating bicharacter $\beta$ that takes values in $\mathbb{C}^{\times}$ instead of $\mathbb{R}^{\times}$,
we will explicitly say that $\beta$ is $\mathbb{C}$-valued.
A \emph{quadratic form} on $T$ is a map $ \mu : T \rightarrow \{ \pm 1 \} $ such that $\beta_{\mu}$ is an alternating bicharacter,
where $ \beta_{\mu} : T \times T \rightarrow \{ \pm 1 \} $,
called the \emph{polarization} of $\mu$,
is defined by
\begin{equation}\label{eq:BetaMu}
\beta_{\mu}(u,v) := \mu(uv) \mu(u)^{-1} \mu(v)^{-1}.
\end{equation}
(The inverses above have no effect, but this way the equation is more similar to the usual definition 
of quadratic forms on a vector space.)
\end{definition}

\begin{definition}\label{def:bichar_two_types}
Let $\beta$ be an alternating bicharacter on $T$, and consider its radical:
$ \mathrm{rad}(\beta) := \{ t \in T \mid \beta(u,t)=1 , \, \forall u \in T \} $.
We say that $\beta$ has \emph{type I} if $ \mathrm{rad}(\beta) = \{ e \} $,
and that it has \emph{type II} if $ \mathrm{rad}(\beta) = \{ e,f \} $ for some $f\in T$ (of order $2$).
In the latter case, as $f$ is determined by $\beta$, we denote it by $f_{\beta}$.
\end{definition}

We will say that a family $ \{ a_1 , b_1 , \ldots , a_m , b_m \}$ in $ T $
is \emph{symplectic} if $ \beta(a_i,b_i) = \beta(b_i,a_i) = -1 $ ($ i = 1 , \ldots , m $)
and the value of $\beta$ on all other pairs is $+1$.
We will say that it is a \emph{basis} if $T$ is the direct product of the subgroups
$ \langle a_1 \rangle , \langle b_1 \rangle , \ldots , \langle a_m \rangle , \langle b_m \rangle$.

The following result \cite[Proposition 9]{Rodrigo-2016} describes 
alternating bicharacters satisfying Definition \ref{def:bichar_two_types}.

\begin{proposition}\label{prop:BasisAltBich}
Let $\beta$ be an alternating bicharacter on a finite abelian group $T$.
\begin{enumerate}
	\item If $\beta$ has type I, then $ T \cong (\mathbb{Z}_2^2)^m $
	and there exists a symplectic basis $ \{ a_1 , \allowbreak b_1 , \allowbreak \ldots , \allowbreak a_m , \allowbreak b_m \} $ of $T$.
	\item If $\beta$ has type II, then either $ T \cong (\mathbb{Z}_2^2)^m \times \mathbb{Z}_2 $ or
	$ (\mathbb{Z}_2^2)^{m-1} \times ( \mathbb{Z}_2 \times \mathbb{Z}_4 ) $,
	and there exists a symplectic family $ \{ a_1 , \allowbreak b_1 , \allowbreak \ldots , \allowbreak a_m , \allowbreak b_m \} $ such that:
	\begin{enumerate}
		\item In the first case, $ \{ a_1 , b_1 , \ldots , a_m , b_m , f_{\beta} \} $ is a basis of $T$.
		\item In the second case, $ b_m^2 = f_{\beta} $ and $ \{ a_1 , b_1 , \ldots , a_m , b_m \} $ is a basis of $T$.
	\end{enumerate}
\end{enumerate}
\qed 
\end{proposition}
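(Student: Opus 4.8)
The plan is to reduce everything to the structure of the alternating form $\beta$ on the $\mathbb{Z}_2$-vector space $T/(T^2)$ together with careful bookkeeping of the orders of the elements in $T$. Since $\beta$ takes values in $\{\pm1\}$, it is a bimultiplicative map, so $\beta(u^2,w)=\beta(u,w)^2=1$ for all $u,w$; hence $T^2\subseteq\mathrm{rad}(\beta)$. This is the key observation: in type I we get $T^2=\{e\}$, so $T$ is an elementary abelian $2$-group, while in type II we get $T^2\subseteq\{e,f_\beta\}$, so $T$ has exponent dividing $4$ and is a product of copies of $\mathbb{Z}_2$ and $\mathbb{Z}_4$.

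First I would treat type I. Here $T\cong\mathbb{Z}_2^k$ and $\beta$ descends to a nondegenerate alternating bilinear form on the $\mathbb{F}_2$-vector space $T$ (nondegeneracy being exactly $\mathrm{rad}(\beta)=\{e\}$). The standard symplectic basis theorem over a field then gives $k=2m$ and a symplectic basis $\{a_1,b_1,\ldots,a_m,b_m\}$; since every element has order $2$, this symplectic family is automatically a group-theoretic basis, yielding $T\cong(\mathbb{Z}_2^2)^m$. This case is essentially the classical symplectic normalization and should be routine.

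Next I would treat type II, where $\mathrm{rad}(\beta)=\{e,f_\beta\}$. The induced form on $\overline{T}:=T/\langle f_\beta\rangle$ has trivial radical except possibly for the image of $T^2$, so I would pass to the quotient carefully: $\beta$ induces a nondegenerate alternating form on $T/\mathrm{rad}(\beta)$, which is elementary abelian of even rank $2m$, giving a symplectic basis $\{\bar a_1,\bar b_1,\ldots,\bar a_m,\bar b_m\}$ downstairs. I would then lift these to a symplectic family $\{a_1,b_1,\ldots,a_m,b_m\}$ in $T$, adjusting the lifts by elements of $\mathrm{rad}(\beta)$ if necessary to keep the $\beta$-values correct. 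The dichotomy in the statement comes from analyzing where $f_\beta$ sits relative to $T^2$: either $f_\beta\notin T^2$, so $T$ splits as the symplectic part times the independent $\langle f_\beta\rangle\cong\mathbb{Z}_2$ (the first case), or $f_\beta\in T^2$, in which case $f_\beta=b_m^2$ for a suitable choice of lift $b_m$ of order $4$, giving the $\mathbb{Z}_2\times\mathbb{Z}_4$ factor (the second case). I expect the main obstacle to be exactly this lifting step: ensuring that the representatives can be chosen so that the symplectic relations hold on the nose in $T$ (not merely modulo $\mathrm{rad}(\beta)$) and that the resulting family is genuinely a basis, i.e.\ that the subgroups it generates are independent and exhaust $T$. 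Since $\mathrm{rad}(\beta)$ is central and small, each correction only involves multiplying a lift by $e$ or $f_\beta$, so the adjustments can be made one generator at a time; the order-$4$ element appears precisely when the square of a lift is forced to equal $f_\beta$. Verifying that the two listed isomorphism types of $T$ are the only possibilities then follows from counting generators and their orders.
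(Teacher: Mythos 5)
Your overall architecture is sound, and the type I case is complete as you describe it: the observation that $\beta(u^2,w)=\beta(u,w)^2=1$ forces $T^{[2]}\subseteq\mathrm{rad}(\beta)$ is exactly the right starting point, and over $\mathbb{F}_2$ the classical symplectic basis theorem finishes that case. (For what it is worth, the paper itself gives no proof of this proposition — it quotes it from \cite{Rodrigo-2016} — so your attempt has to stand on its own.) The gap is in type II, in the subcase $f_\beta\in T^{[2]}$, and it sits precisely at the step you flag as the "main obstacle" — but the difficulty is not where you put it, and the repair mechanism you propose cannot work. The symplectic relations hold on the nose for \emph{any} choice of lifts, since $\beta$ factors through $T/\mathrm{rad}(\beta)$ in each variable; nothing needs adjusting there. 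The genuine issue is the \emph{orders} of the lifts, and these are insensitive to your corrections: for $\bar t\in\bar T:=T/\langle f_\beta\rangle$ the two lifts $t$ and $tf_\beta$ satisfy $(tf_\beta)^2=t^2f_\beta^2=t^2$, so multiplying a lift by $e$ or $f_\beta$ — the only adjustments your proposal allows — never changes whether a generator has order $2$ or $4$.

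Concretely, whether a lift of $\bar t$ has order $4$ is governed by the map $q:\bar T\to\langle f_\beta\rangle$, $q(\bar t):=t^2$, which is a well-defined group homomorphism (abelianness gives $(ts)^2=t^2s^2$), nontrivial exactly in this subcase. If the symplectic basis chosen downstairs meets $\bar T\setminus\ker q$ in more than one element, your scheme breaks irreparably: e.g.\ in $T=\langle a\rangle\times\langle b\rangle\cong\mathbb{Z}_2\times\mathbb{Z}_4$ with $\beta(a,b)=-1$ and $f_\beta=b^2$, the pair $\{\bar b,\overline{ab}\}$ is a perfectly good symplectic basis of $\bar T$, yet \emph{every} lift of either element has order $4$, the two cyclic subgroups generated by any lifts both contain $f_\beta$, and the product is never direct. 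The missing idea is to choose the downstairs basis adapted to $q$: since $\bar\beta$ is nondegenerate, there is a unique $\bar c\neq\bar e$ with $q=\bar\beta(\bar c,\cdot)$ (identifying $\langle f_\beta\rangle$ with $\{\pm1\}$); complete $\bar c$ to a symplectic basis of $\bar T$ with $\bar a_m=\bar c$. Then $q(\bar a_i)=1$ for all $i$ (note $q(\bar c)=\bar\beta(\bar c,\bar c)=1$), $q(\bar b_i)=1$ for $i<m$, and $q(\bar b_m)=\bar\beta(\bar a_m,\bar b_m)=-1$, so arbitrary lifts give $a_i^2=e$, $b_i^2=e$ for $i<m$, and $b_m^2=f_\beta$. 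The subgroup they generate contains $f_\beta=b_m^2$ and surjects onto $\bar T$, hence equals $T$, and comparing $|T|=2^{2m+1}$ with the product $2^{2m-1}\cdot 4$ of the orders shows the product is direct. With this change-of-basis step (a genuinely different move from adjusting lifts by radical elements) inserted, your argument goes through; without it, the proof as written fails.
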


\begin{lemma}\label{lem:index2}
Let $\mu$ and $\eta$ be two different quadratic forms
on a finite abelian group $T$
such that $ \beta_{\mu} = \beta_{\eta} $.
Then $ \{ t \in T \mid \mu(t) = \eta(t) \} $
is a subgroup of $T$ of index $2$.
\end{lemma}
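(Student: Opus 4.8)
The plan is to consider the quotient $\chi := \mu\eta^{-1} : T \to \{\pm 1\}$, which measures where $\mu$ and $\eta$ disagree, and show it is a group homomorphism that is nontrivial; then the set in question is precisely $\ker\chi$, a subgroup of index $2$. First I would compute, for $u,v \in T$, the product $\chi(uv) = \mu(uv)\eta(uv)^{-1}$. Using the definition \eqref{eq:BetaMu} of the polarization, we have $\mu(uv) = \beta_\mu(u,v)\mu(u)\mu(v)$ and likewise $\eta(uv) = \beta_\eta(u,v)\eta(u)\eta(v)$. Since by hypothesis $\beta_\mu = \beta_\eta$, these two polarization factors cancel when we form the quotient, leaving $\chi(uv) = \mu(u)\mu(v)\eta(u)^{-1}\eta(v)^{-1} = \chi(u)\chi(v)$. (Here I use that all values lie in the commutative group $\{\pm 1\}$, so reordering and the distinction between a factor and its inverse are harmless.) Thus $\chi$ is a homomorphism into $\{\pm 1\}$.

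Next I would observe that $\{t \in T \mid \mu(t) = \eta(t)\}$ is exactly $\{t \in T \mid \chi(t) = 1\} = \ker\chi$, because $\mu(t) = \eta(t)$ iff $\mu(t)\eta(t)^{-1} = 1$, the values being $\pm 1$. As the kernel of a group homomorphism, this is automatically a subgroup of $T$. It remains only to pin down its index. Since $\mu$ and $\eta$ are assumed \emph{different}, there exists some $t_0 \in T$ with $\mu(t_0) \neq \eta(t_0)$, so $\chi(t_0) = -1$ and $\chi$ is surjective onto $\{\pm 1\}$. By the first isomorphism theorem, $T/\ker\chi \cong \{\pm 1\}$, whence $\ker\chi$ has index $2$.

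I do not anticipate a genuine obstacle here; the statement is essentially the observation that two quadratic forms with equal polarization differ by a homomorphism, combined with the first isomorphism theorem. The only point requiring a little care is the bookkeeping with inverses in the definition of $\beta_\mu$: the parenthetical remark after \eqref{eq:BetaMu} already notes that the inverses have no effect since everything takes values in $\{\pm 1\}$, so I would simply invoke that and keep the computation clean by working multiplicatively in $\{\pm 1\}$ throughout. The one hypothesis that must be used in full is that $\mu \neq \eta$: without it $\chi$ could be trivial and the index would be $1$ rather than $2$, so I would make sure to highlight the choice of a witnessing element $t_0$.
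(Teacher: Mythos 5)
Your proof is correct and follows essentially the same route as the paper's: both rest on the computation that, since $\beta_\mu=\beta_\eta$, the polarization factors cancel in $\mu(uv)\eta(uv)^{-1}$, so that $\mu$ and $\eta$ agree on a subgroup whose complement is a single coset. The paper verifies this directly in terms of the set $S=\{t\in T\mid \mu(t)=\eta(t)\}$ (closure of $S$, and products of two elements outside $S$ landing in $S$), whereas you package the same computation as the statement that $\chi=\mu\eta^{-1}$ is a nontrivial homomorphism to $\{\pm 1\}$ with kernel $S$ --- a purely cosmetic difference.
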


\begin{proof}
Let $ S := \{ t \in T \mid \mu(t) = \eta(t) \} $.
By Equation \eqref{eq:BetaMu}, we have $\mu(e)=1=\eta(e)$, so $e\in S$.
Also, $ u,v \in S $ implies $ uv \in S $, 
hence $S$ is a subgroup.
Since $\mu$ and $\eta$ take values in $ \{ \pm 1 \} $,
$ u,v \in T \setminus S $ implies $\mu(u)\mu(v)=\eta(u)\eta(v)$, 
hence $ uv \in S $ by Equation \eqref{eq:BetaMu}.
Thus the quotient group $ T / S $ can have only two elements.
\end{proof}

\begin{lemma}\label{lem:Ind2SubgrTyp1}
Let $\beta$ be an alternating bicharacter of type I
on a finite abelian group $T$.
Then the following map is a bijection:
\begin{align*}
T & \longrightarrow \{ S \mid S\text{ is a subgroup of }T,\:[T:S] \leq 2 \}
\\
u & \longmapsto u^{\perp} = \{ v \in T \mid \beta(u,v) = 1 \}
\end{align*}
\end{lemma}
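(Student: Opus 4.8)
The plan is to translate the statement into linear algebra over $\mathbb{F}_2$. First I would observe that $T$ is an elementary abelian $2$-group: for any $u,v \in T$ we have $\beta(u^2,v) = \beta(u,v)^2 = 1$ (since $\beta$ takes values in $\{\pm 1\}$), so $u^2 \in \mathrm{rad}(\beta) = \{e\}$ because $\beta$ has type I; alternatively this is immediate from Proposition \ref{prop:BasisAltBich}(1). Hence $T$ is a vector space over $\mathbb{F}_2$, and writing $\beta(u,v) = (-1)^{B(u,v)}$ defines an $\mathbb{F}_2$-bilinear form $B \colon T \times T \to \mathbb{F}_2$. The hypothesis that $\beta$ has type I says precisely that $B$ is nondegenerate, and by definition $u^{\perp} = \{ v \in T \mid B(u,v) = 0 \} = \ker \phi_u$, where $\phi_u \colon T \to \mathbb{F}_2$ is the linear functional $\phi_u(v) = B(u,v)$.

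Next I would factor the map under study as $u \mapsto \phi_u \mapsto \ker \phi_u$ and analyze the two pieces separately. Nondegeneracy of $B$ means exactly that $u \mapsto \phi_u$ is an injective $\mathbb{F}_2$-linear map $T \to T^{*} := \mathrm{Hom}_{\mathbb{F}_2}(T,\mathbb{F}_2)$, and since $T$ is finite with $\vert T \vert = \vert T^{*} \vert$, it is in fact a bijection. For the second piece, I would use the elementary fact that, over $\mathbb{F}_2$, the assignment $\phi \mapsto \ker \phi$ is a bijection from $T^{*}$ onto $\{\, S \mid S \text{ is a subgroup of } T,\ [T:S] \le 2 \,\}$: the zero functional maps to $T$ itself, while each nonzero functional maps to an index-$2$ subgroup (a hyperplane), and distinct nonzero functionals have distinct kernels. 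Composing the two bijections gives the desired conclusion, with $e \mapsto T$ and every nonzero $u$ mapping to a subgroup of index $2$.

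The routine verifications — that $B$ is well defined and bilinear, and that $\vert T \vert = \vert T^{*} \vert$ — are immediate. The one point that genuinely needs the field to be $\mathbb{F}_2$, and which I regard as the crux, is the claim that a nonzero functional is uniquely determined by its kernel (so that $\phi \mapsto \ker \phi$ is injective on nonzero functionals): over a larger field the kernel determines a functional only up to a scalar, but over $\mathbb{F}_2$ the only nonzero scalar is $1$, so equal kernels force equal functionals. Combined with the observation that every index-$2$ subgroup arises as the kernel of a (necessarily nonzero) functional, this yields the bijection between $T^{*}$ and the subgroups of index at most $2$. A cardinality check confirms the count: $\vert T \vert = 2^{\dim T}$ equals $1 + (2^{\dim T} - 1)$, the number of subgroups of index at most $2$.
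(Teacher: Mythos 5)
Your proof is correct and follows essentially the same route as the paper: the paper's entire proof is the one-line remark that ``it is enough to interpret $\beta$ as a nondegenerate alternating bilinear form over the field of two elements,'' and your argument simply carries out that interpretation in full detail (passing to $B$ over $\mathbb{F}_2$, factoring the map through $u \mapsto \phi_u \mapsto \ker\phi_u$, and using that over $\mathbb{F}_2$ a nonzero functional is determined by its kernel). The only difference is the level of detail, not the idea.
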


\begin{proof}
It is enough to interpret $\beta$ as
a nondegenerate alternating bilinear form
over the field of two elements.
\end{proof}

\begin{lemma}\label{lem:Ind2SubgrTyp2}
Let $\beta$ be an alternating bicharacter of type II
on a finite abelian group $T$.
Then the following map is a bijection:
\begin{align*}
T / \langle f_{\beta} \rangle
& \longrightarrow
\{ S \mid S\text{ is a subgroup of }T,\:f_{\beta} \in S , \: [T:S] \leq 2 \}
\\
[u]
& \longmapsto
u^{\perp} = \{ v \in T \mid \beta(u,v) = 1 \}
\end{align*}
\end{lemma}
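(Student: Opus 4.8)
The plan is to reduce everything to the type I case (Lemma \ref{lem:Ind2SubgrTyp1}) by passing to the quotient $\overline{T} := T/\langle f_{\beta} \rangle$. First I would record the two elementary observations that make the map land in the stated set. Since $f_{\beta} \in \mathrm{rad}(\beta)$, we have $\beta(u,f_{\beta}) = 1$ for every $u$, so $f_{\beta} \in u^{\perp}$ always; and since $v \mapsto \beta(u,v)$ is a homomorphism from $T$ to $\{\pm 1\}$ with kernel $u^{\perp}$, the index $[T:u^{\perp}]$ is $1$ or $2$. The same computation $\beta(uf_{\beta},v) = \beta(u,v)\beta(f_{\beta},v) = \beta(u,v)$ shows that $u^{\perp}$ depends only on the class $[u] \in \overline{T}$, so the map is well defined.

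Next I would descend $\beta$ to $\overline{T}$. Because $\langle f_{\beta} \rangle = \mathrm{rad}(\beta)$, the formula $\overline{\beta}([u],[v]) := \beta(u,v)$ gives a well-defined alternating bicharacter on $\overline{T}$, and it is nondegenerate: if $\overline{\beta}([u],[v]) = 1$ for all $v$, then $u \in \mathrm{rad}(\beta) = \langle f_{\beta} \rangle$, that is, $[u] = e$. Thus $\overline{\beta}$ has type I, and Lemma \ref{lem:Ind2SubgrTyp1} applies to the pair $(\overline{T}, \overline{\beta})$: the map $[u] \mapsto [u]^{\perp}$ is a bijection from $\overline{T}$ onto the set of subgroups of $\overline{T}$ of index at most $2$.

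It remains to match this with the statement at hand using the correspondence theorem. Subgroups $S$ of $T$ that contain $\langle f_{\beta} \rangle$ correspond bijectively, and with the same index, to subgroups $\overline{S} := S/\langle f_{\beta} \rangle$ of $\overline{T}$; in particular the condition ``$f_{\beta} \in S$ and $[T:S] \leq 2$'' corresponds exactly to ``$[\overline{T}:\overline{S}] \leq 2$''. Under this correspondence the subgroup $u^{\perp}$ (which contains $f_{\beta}$ by the first paragraph) is sent to $[u]^{\perp}$, since $[v] \in [u]^{\perp}$ iff $\beta(u,v) = 1$ iff $v \in u^{\perp}$. Therefore the map of the lemma is the composite of the bijection from Lemma \ref{lem:Ind2SubgrTyp1} with the bijection furnished by the correspondence theorem, hence a bijection.

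I do not expect a genuine obstacle here; the only point requiring care is the bookkeeping, namely verifying that the perpendicular operation is compatible with passing to the quotient and that ``index $\leq 2$ and containing $f_{\beta}$'' is precisely the image of ``index $\leq 2$'' under the correspondence theorem. Once these identifications are in place, the result follows immediately from the type I case.
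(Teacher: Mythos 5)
Your proposal is correct and follows essentially the same route as the paper: the paper's (very terse) proof also passes to the quotient $T/\langle f_{\beta}\rangle$, observes that the induced bicharacter $\bar{\beta}$ has type I, and invokes Lemma \ref{lem:Ind2SubgrTyp1}. You have simply made explicit the bookkeeping (well-definedness, the correspondence theorem, and the compatibility of $\perp$ with the quotient) that the paper leaves to the reader.
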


\begin{proof}
Consider the alternating bicharacter $\bar{\beta}$
on $ T / \langle f_{\beta} \rangle $ such that
$ \beta = \bar{\beta} \circ ( \pi \times \pi ) $,
where $ \pi : T \rightarrow T / \langle f_{\beta} \rangle $
is the natural projection.
Then $\bar{\beta}$ has type I and we can apply Lemma \ref{lem:Ind2SubgrTyp1}.
\end{proof}

\begin{notation}\label{nota:subgroups}
For any natural number $n$ and abelian group $T$, we define
$ T_{[n]} =  \{ t\in T \mid t^n = e \} $ and  $ T^{[n]} = \{ t^n \mid t \in T \} $.
\end{notation}

(We will primarily need the case $n=2$.)

\begin{notation}
Let $\beta$ be an alternating bicharacter of type II on $ T \cong \mathbb{Z}_2^{2m-1} \times \mathbb{Z}_4 $.
Then $T^{[2]}$ has order $2$ and we denote by $f_T$ its generator.
By Proposition \ref{prop:BasisAltBich}, $ f_T = f_{\beta} $.
We set $ \mathrm{rad}'(\beta) := \mathrm{rad} ( \beta \vert_{ T_{[2]} \times T_{[2]} } ) \setminus \mathrm{rad}(\beta) $
(which equals $ \{ a_m , a_m f_T \} $ with the notation of Proposition \ref{prop:BasisAltBich}).
\end{notation}

\begin{remark}\label{rem:fT}
If $\eta$ is a quadratic form on $ T \cong \mathbb{Z}_2^{2m-1} \times \mathbb{Z}_4 $ such that $\beta_{\eta}$ has type II,
then $ \eta(f_T) = +1 $.
Indeed, if $g$ is an element of $T$ of order $4$, then $ f_T = g^2 $,
so $ \eta(f_T) = \eta(g^2) = \eta(g)^2 \beta_{\eta}(g,g) = +1 $.
Also note that $\eta$ takes the same value on the two elements of $\mathrm{rad}'(\beta_{\eta})$,
because one is the other multiplied by $f_T$.
Finally, if $\beta$ is an alternating bicharacter of type II
on $ T \cong \mathbb{Z}_2^{2m-1} \times \mathbb{Z}_4 $,
and $\mu$ is a quadratic form defined only on $T_{[2]}$ such that
$ \beta_{\mu} = \beta \vert_{ T_{[2]} \times T_{[2]} } $
and $ \mu(f_T) = +1 $,
then there exist exactly two quadratic forms on $T$
that extend $\mu$
and whose polarization is $\beta$.
\end{remark}

\begin{proposition}\label{prop:NiceMap}
Let $T$ be a finite abelian group, $K$ a subgroup of $T$ of index $2$,
and $ \nu : T \setminus K \rightarrow \{ \pm 1 \} $ a map.
Consider the family of maps $ \mu_g : K \rightarrow \{ \pm 1 \} $
defined by $ \mu_g(k) := \nu(gk) \nu(g)^{-1} $,
as $g$ runs through $ T \setminus K $.
Then, if a member of this family is a quadratic form, so are the others,
and all have the same
polarization $\beta$.
Moreover, if $\beta$ has type II,
the value $\mu_g(f_{\beta})$ does not depend on the choice of $ g \in T \setminus K $.
\end{proposition}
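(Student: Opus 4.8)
The plan is to understand precisely how $\mu_{g'}$ differs from $\mu_g$ when the coset representative changes, and to show that this difference is always multiplication by a linear character, which leaves the polarization unchanged.

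First I would fix a reference representative $g \in T \setminus K$ and write any other representative as $g' = gh$ with $h \in K$ (possible since $[T:K] = 2$, so any two elements of $T \setminus K$ differ by an element of $K$). Substituting into the defining formula $\mu_{g'}(k) = \nu(g'k)\nu(g')^{-1}$ and expressing $\nu(g(hk))$ and $\nu(gh)$ back in terms of $\mu_g$ yields
\[ \mu_{g'}(k) = \mu_g(hk)\,\mu_g(h)^{-1}. \]
At this step one must check that every value of $\nu$ involved is taken on $T \setminus K$, which holds because $g \in T \setminus K$ while $hk, h \in K$, so that $g(hk)$ and $gh$ remain in the nontrivial coset.

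Next, assuming that $\mu_g$ is a quadratic form with polarization $\beta := \beta_{\mu_g}$, I would use Equation \eqref{eq:BetaMu} to rewrite $\mu_g(hk)\mu_g(h)^{-1} = \beta(h,k)\,\mu_g(k)$, giving
\[ \mu_{g'} = \chi_h \cdot \mu_g, \qquad \chi_h(k) := \beta(h,k). \]
Since $\beta$ is a bicharacter, $\chi_h$ is a homomorphism $K \to \{\pm 1\}$, i.e.\ a linear character. The key observation is then that multiplying a quadratic form by a linear character produces a quadratic form with the \emph{same} polarization: computing $\beta_{\chi_h \mu_g}$ from its definition, the character contributions $\chi_h(uv)\chi_h(u)^{-1}\chi_h(v)^{-1}$ cancel, leaving $\beta_{\chi_h \mu_g} = \beta_{\mu_g} = \beta$. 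Because a map is a quadratic form exactly when its polarization is a bicharacter, this immediately gives both conclusions of the first part: every $\mu_{g'}$ is a quadratic form, and all of them share the polarization $\beta$.

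For the final assertion, suppose $\beta$ has type II, so that $f_{\beta} \in \mathrm{rad}(\beta)$ by Definition \ref{def:bichar_two_types}. Evaluating $\mu_{g'} = \chi_h \mu_g$ at $f_{\beta}$ gives $\mu_{g'}(f_{\beta}) = \beta(h, f_{\beta})\,\mu_g(f_{\beta})$, and since $f_{\beta}$ lies in the radical we have $\beta(h, f_{\beta}) = 1$ for every $h \in K$; hence $\mu_{g'}(f_{\beta}) = \mu_g(f_{\beta})$, proving independence of the representative. I do not anticipate a serious obstacle: the only delicate point is the bookkeeping in the first step, namely ensuring $\nu$ is evaluated solely on $T \setminus K$, after which recognizing the difference $\chi_h$ as a linear character makes the invariance of the polarization transparent.
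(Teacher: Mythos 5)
Your proof is correct and follows essentially the same route as the paper: both derive the key identity $\mu_{g'}(k) = \mu_g(k)\,\beta(h,k)$ (the paper writes the second representative as $h \in T\setminus K$ and uses $g^{-1}h \in K$, which is the same as your $g' = gh$ with $h \in K$), then observe that twisting by the character $\beta(h,\cdot)$ preserves quadratic-form-ness and the polarization, and that $\beta(h,f_\beta)=1$ since $f_\beta$ lies in the radical. No gaps.
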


\begin{proof}
Let $ g,h \in T \setminus K $, and assume that $\mu_g$ is a quadratic form.
Call $\beta$ its
polarization.
The assertions follow from the following formula ($ k \in K $):
\begin{equation}\label{eq:ProofNiceMap}
\mu_h(k) = \frac{\nu(hk)}{\nu(h)} = \frac{ \mu_g(g^{-1}hk) }{ \mu_g(g^{-1}h) } = \mu_g(k) \beta( g^{-1}h , k ).
\end{equation}
Indeed, as $\beta$ is multiplicative in $k$, $\mu_h$ is also a quadratic form with the same
polarization
as $\mu_g$.
And if $\beta$ has type II, then $ \mu_h(f_{\beta}) = \mu_g(f_{\beta}) \beta( g^{-1}h , f_{\beta} ) = \mu_g(f_{\beta}) $.
\end{proof}

\begin{definition}
In the situation of Proposition \ref{prop:NiceMap},
we say that $\nu$ is a \emph{nice map} on $ T \setminus K $,
and we denote by $\beta_{\nu}$ the common
polarization $\beta$ of the quadratic forms $\mu_g$.
If $\beta$ has type II,
we also define $ \nu(f_{\beta}) := \mu_g(f_{\beta}) $,
where $g$ is any element of $ T \setminus K $.
\end{definition}

\begin{lemma}\label{lem:NuRad}
Under the conditions of Proposition \ref{prop:NiceMap},
suppose that $ T \cong \mathbb{Z}_2^{2m-2} \times \mathbb{Z}_4 $,
$ K \cong \mathbb{Z}_2^{2m-3} \times \mathbb{Z}_4 $,
and $\beta = \beta_\nu$ has type II.
Then the set $\mu_g (\mathrm{rad}'(\beta))$,
where $g$ is any element of $ T \setminus K $ of order $2$,
does not depend on the choice of $g$.
\end{lemma}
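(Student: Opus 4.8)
The plan is to compare the two maps $\mu_g$ and $\mu_h$ directly, for two elements $g,h\in T\setminus K$ of order $2$, using the transformation formula already recorded in the proof of Proposition \ref{prop:NiceMap}. First I would set $t:=g^{-1}h$. Since $g$ and $h$ both lie in the nontrivial coset of $K$ and $T/K\cong\mathbb{Z}_2$, we have $t\in K$; moreover, as $g$ has order $2$ we have $g^{-1}=g$, so $t=gh$, and since $T$ is abelian with $g^2=h^2=e$ it follows that $t^2=g^{-2}h^2=e$. Hence in fact $t\in K_{[2]}$. This reduces the statement to checking that $\mu_g$ and $\mu_h$ agree on $\mathrm{rad}'(\beta)$.

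Next I would invoke Equation \eqref{eq:ProofNiceMap}, which gives, for every $k\in K$,
\[
\mu_h(k)=\mu_g(k)\,\beta(g^{-1}h,k)=\mu_g(k)\,\beta(t,k).
\]
Thus $\mu_h$ and $\mu_g$ differ precisely by the character $k\mapsto\beta(t,k)$, and it remains to verify that this character is trivial on $\mathrm{rad}'(\beta)$.

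The key observation is that $t\in K_{[2]}$, while by definition $\mathrm{rad}'(\beta)\subseteq\mathrm{rad}\bigl(\beta\vert_{K_{[2]}\times K_{[2]}}\bigr)$ (here $K\cong\mathbb{Z}_2^{2(m-1)-1}\times\mathbb{Z}_4$ plays the role of the ambient group in the definition of $\mathrm{rad}'$, with $m$ replaced by $m-1$). Consequently, for any $k\in\mathrm{rad}'(\beta)$ we have $\beta(t,k)=1$, since $t$ ranges over $K_{[2]}$ and $k$ lies in the radical of the restriction of $\beta$ to $K_{[2]}\times K_{[2]}$. Substituting into the displayed formula gives $\mu_h(k)=\mu_g(k)$ for all $k\in\mathrm{rad}'(\beta)$, whence $\mu_g(\mathrm{rad}'(\beta))=\mu_h(\mathrm{rad}'(\beta))$. (By Remark \ref{rem:fT}, each of these sets is actually a singleton, because $\mu_g$ is constant on $\mathrm{rad}'(\beta)$; but this is not needed to conclude that the two sets coincide.)

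I do not anticipate a serious obstacle: once the right objects are assembled, the argument is essentially a one-line computation. The only points demanding care are the bookkeeping that $g^{-1}h$ lands in $K_{[2]}$ rather than merely in $K$ --- which is exactly what forces the character $\beta(t,\cdot)$ to be trivial on $\mathrm{rad}'(\beta)$ --- and the correct reading of the notation $\mathrm{rad}'(\beta)$ relative to the group $K$.
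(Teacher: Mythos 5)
Your proof is correct and follows essentially the same route as the paper's: both apply Equation \eqref{eq:ProofNiceMap} and observe that $g^{-1}h$ lies in $K_{[2]}$ while $\mathrm{rad}'(\beta)$ is contained in the radical of the restriction of $\beta$ to the elements of order at most $2$, so the correcting character $\beta(g^{-1}h,\cdot)$ is trivial on $\mathrm{rad}'(\beta)$. If anything, your bookkeeping is slightly more careful than the paper's, which writes this radical as $\mathrm{rad}(\beta\vert_{T_{[2]}\times T_{[2]}})$ where (since $\beta$ is defined on $K$) it means $\mathrm{rad}(\beta\vert_{K_{[2]}\times K_{[2]}})$ --- exactly the reading you adopt.
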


\begin{proof}
Let $ g,h \in T \setminus K $ be elements of order $2$
and let $ a \in \mathrm{rad}'(\beta) \subseteq \mathrm{rad} ( \beta \vert_{ T_{[2]} \times T_{[2]} } ) $.
By Equation \eqref{eq:ProofNiceMap}, $ \mu_h(a) = \mu_g(a) \beta( g^{-1}h , a ) = \mu_g(a) $.
\end{proof}

\begin{notation}
In the situation of Lemma \ref{lem:NuRad},
we define $ \nu( \mathrm{rad}'(\beta) ) := \mu_g (\mathrm{rad}'(\beta))$,
where $g$ is any element of $ T \setminus K $ of order $2$.
Note that, by Remark \ref{rem:fT} (applied to $K$), $\mu_g$ takes the same value on the two elements of $\mathrm{rad}'(\beta)$,
so the set $\nu (\mathrm{rad}'(\beta))$ actually consists of one element.
\end{notation}

\begin{notation}[Arf invariant]
Let $T$ be a finite set and let $ \mu : T \rightarrow \{ \pm 1 \} $ be a map.
If the cardinality of $\mu^{-1}(+1)$ is greater than the cardinality of $\mu^{-1}(-1)$, we write $ \mathrm{Arf}(\mu) = +1 $.
If it is smaller, we write $ \mathrm{Arf}(\mu) = -1 $.
Finally, if both cardinalities are equal, $ \mathrm{Arf}(\mu) $ is not defined.
\end{notation}

\section{Division gradings and quadratic forms}\label{sect:GradQF}

As mentioned in the Introduction,
the main purpose of this section
is to classify all quadratic forms
whose
polarization
has type I or II.
We establish a correspondence with gradings
in order to prove their existence.

The following division gradings will be our building blocks.

\begin{example}\label{exam:GradDim1}

Two division gradings by the group $\mathbb{Z}_2$:
\[ 
\mathbb{C} = \mathbb{R}1 \oplus \mathbb{R}i \quad \text{and} \quad
\mathbb{R} \times \mathbb{R} = \mathbb{R}(1,1) \oplus \mathbb{R}(1,-1). 
\]
Two division gradings by the group $\mathbb{Z}_2^2$:
\[ M_2(\mathbb{R})=
\mathbb{R} \begin{pmatrix} 1 & 0 \\ 0 & 1 \end{pmatrix} \oplus
\mathbb{R} \begin{pmatrix} 0 & 1 \\ 1 & 0 \end{pmatrix} \oplus
\mathbb{R} \begin{pmatrix} -1 & 0 \\ 0 & 1 \end{pmatrix} \oplus
\mathbb{R} \begin{pmatrix} 0 & -1 \\ 1 & 0 \end{pmatrix} \]
and
\[ \mathbb{H} = \mathbb{R}1 \oplus \mathbb{R}i \oplus \mathbb{R}j \oplus \mathbb{R}k. \]
And the three division gradings by the group
$ \mathbb{Z}_2 \times \mathbb{Z}_4 =
\{ e,a ; \allowbreak b,ab ; \allowbreak
b^2,ab^2 ; \allowbreak b^3,ab^3 \} $
presented in Figure \ref{fig:DivGrad}.
\begin{figure}
\begin{align*}
M_2(\mathbb{C}) = &
	\mathbb{R} \begin{pmatrix} 1 & 0 \\ 0 & 1 \end{pmatrix}
	\oplus
	\mathbb{R} \begin{pmatrix} 0 & 1 \\ 1 & 0 \end{pmatrix}
	\oplus
\\
&
	\mathbb{R} \begin{pmatrix} e^{ \pi i / 4 } & 0 \\ 0 & -e^{ \pi i / 4 } \end{pmatrix}
	\oplus
	\mathbb{R} \begin{pmatrix} 0 & -e^{ \pi i / 4 } \\ e^{ \pi i / 4 } & 0 \end{pmatrix}
	\oplus
\\
&
	\mathbb{R} \begin{pmatrix} i & 0 \\ 0 & i \end{pmatrix}
	\oplus
	\mathbb{R} \begin{pmatrix} 0 & i \\ i & 0 \end{pmatrix}
	\oplus
\\
&
	\mathbb{R} \begin{pmatrix} e^{ 3 \pi i / 4 } & 0 \\ 0 & -e^{ 3 \pi i / 4 } \end{pmatrix}
	\oplus
	\mathbb{R} \begin{pmatrix} 0 & -e^{ 3 \pi i / 4 } \\ e^{ 3 \pi i / 4 } & 0 \end{pmatrix}
\end{align*}

\begin{align*}
M_2( \mathbb{R} \times \mathbb{R} ) = &
	\mathbb{R} \begin{pmatrix} (1,1) & (0,0) \\ (0,0) & (1,1) \end{pmatrix}
	\oplus
	\mathbb{R} \begin{pmatrix} (1,1) & (0,0) \\ (0,0) & (-1,-1) \end{pmatrix}
	\oplus
\\
&
	\mathbb{R} \begin{pmatrix} (0,0) & (1,-1) \\ (1,1) & (0,0) \end{pmatrix}
	\oplus
	\mathbb{R} \begin{pmatrix} (0,0) & (1,-1) \\ (-1,-1) & (0,0) \end{pmatrix}
	\oplus
\\
&
	\mathbb{R} \begin{pmatrix} (1,-1) & (0,0) \\ (0,0) & (1,-1) \end{pmatrix}
	\oplus
	\mathbb{R} \begin{pmatrix} (1,-1) & (0,0) \\ (0,0) & (-1,1) \end{pmatrix}
	\oplus
\\
&
	\mathbb{R} \begin{pmatrix} (0,0) & (1,1) \\ (1,-1) & (0,0) \end{pmatrix}
	\oplus
	\mathbb{R} \begin{pmatrix} (0,0) & (1,1) \\ (-1,1) & (0,0) \end{pmatrix}
\end{align*}

\begin{align*}
M_2(\mathbb{R}) \times \mathbb{H} = &
	\mathbb{R} \left( \begin{pmatrix} 1 & 0 \\ 0 & 1 \end{pmatrix} , 1 \right)
	\oplus
	\mathbb{R} \left( \begin{pmatrix} 0 & -1 \\ 1 & 0 \end{pmatrix} , i \right)
	\oplus
\\
&
	\mathbb{R} \left( \begin{pmatrix} 1 & 0 \\ 0 & -1 \end{pmatrix} , j \right)
	\oplus
	\mathbb{R} \left( \begin{pmatrix} 0 & 1 \\ 1 & 0 \end{pmatrix} , k \right)
	\oplus
\\
&
	\mathbb{R} \left( \begin{pmatrix} 1 & 0 \\ 0 & 1 \end{pmatrix} , -1 \right)
	\oplus
	\mathbb{R} \left( \begin{pmatrix} 0 & -1 \\ 1 & 0 \end{pmatrix} , -i \right)
	\oplus
\\
&
	\mathbb{R} \left( \begin{pmatrix} 1 & 0 \\ 0 & -1 \end{pmatrix} , -j \right)
	\oplus
	\mathbb{R} \left( \begin{pmatrix} 0 & 1 \\ 1 & 0 \end{pmatrix} , -k \right)
\end{align*}
\caption{Division gradings of Example \ref{exam:GradDim1}; degrees in the group $\langle a\rangle\times\langle b\rangle\cong\mathbb{Z}_2 \times \mathbb{Z}_4$ 
are assigned line-by-line in the following order: $e,a ; \allowbreak b,ab ; \allowbreak b^2,ab^2 ; \allowbreak b^3,ab^3$.}
\label{fig:DivGrad}
\end{figure}
\end{example}

Let $\mathcal{D}$ be a finite-dimensional real (associative) algebra whose center $Z(\mathcal{D})$ has dimension $1$ or $2$.
Let $G$ be an abelian group and let $\Gamma$ be a division $G$-grading on $\mathcal{D}$  
with support $T$ and homogeneous components of dimension $1$. 
Note that $\mathcal{D}$ must be unital, but we do not assume that it is a simple algebra.
By a generalization of Maschke's Theorem (see for example \cite[Corollary 10.2.5 on p.~443]{Karpilovsky-1992}),
$\mathcal{D}$ is necessarily semisimple, which implies that it is simple if $Z(\mathcal{D})$ is $\mathbb{R}$ or $\mathbb{C}$, 
and the direct product of two simple algebras if $Z(\mathcal{D})$ is $\mathbb{R}\times\mathbb{R}$. 

We claim that the graded algebra $\mathcal{D}$ is equivalent to one, and only one, tensor product on the list
below,
equipped with the product grading where
each factor is graded as in Example \ref{exam:GradDim1}.
The isomorphism classes are in bijective correspondence
with the triples $(T,\beta,\mu)$,
where $\beta$ is an alternating bicharacter on $T$ of type I or II,
and $\mu$ is a quadratic form on $T_{[2]}$ such that
$ \beta_{\mu} = \beta \vert_{ T_{[2]} \times T_{[2]} } $.
Namely, $ \beta : T \times T \rightarrow \{ \pm 1 \} $ is defined
by the commutation relations of homogeneous elements,
\begin{equation}\label{eq:CommRel}
X_u X_v = \beta (u,v) X_v X_u
\end{equation}
for all $ X_u \in \mathcal{D}_u $ and $ X_v \in \mathcal{D}_v $,
and $ \mu : T_{[2]} \rightarrow \{ \pm 1 \} $ is defined
by the signs of the squares of homogeneous elements,
\begin{equation}\label{eq:QuadrForm}
X_t^2 = \mu (t) I
\end{equation}
for all $ X_t \in \mathcal{D}_t $ such that $ X_t^4 = I $,
where $I$ is the unity of $\mathcal{D}$ and $ t \in T_{[2]} $.
Conversely, given such $(T,\beta,\mu)$,
we can construct $\mathcal{D}$ as the algebra with generators $X_u$, with $u$ ranging over a basis of $T$,
and defining relations given by Equations \eqref{eq:CommRel} for all basis elements $u$ and $v$, 
Equations \eqref{eq:QuadrForm} for all basis elements $t$ of order $2$,
and $ X_t^4 = \mu(f_T) I $ for the basis element $t$ of order $4$ (if it is present).
The grading on $\mathcal{D}$ is defined by declaring the generator $X_u$ to be of degree $u$.
Now we give the list of the equivalence classes,
and we also compile the classification up to isomorphism
to serve as a reference:

\medskip

(1-a)
$ M_n(\mathbb{R}) \cong M_2(\mathbb{R}) \otimes \ldots \otimes M_2(\mathbb{R}) $,
$ n = 2^m \geq 1 $ (if $n=1$, $ M_n(\mathbb{R}) = \mathbb{R} $ with the trivial grading).
The grading $\Gamma$ is determined up to isomorphism by $(T,\mu)$,
where $T$ is a subgroup of $G$ isomorphic to $\mathbb{Z}_2^{2m}$,
and $\mu$ is a quadratic form on $T$ such that $\beta_{\mu}$ has type I and $ \mathrm{Arf}(\mu) = +1 $.

(1-b)
$ M_{n/2}(\mathbb{H}) \cong M_2(\mathbb{R}) \otimes \ldots \otimes M_2(\mathbb{R}) \otimes \mathbb{H} $,
$ n = 2^m \geq 2 $.
The grading $\Gamma$ is determined up to isomorphism by $(T,\mu)$,
where $T$ is a subgroup of $G$ isomorphic to $\mathbb{Z}_2^{2m}$,
and $\mu$ is a quadratic form on $T$ such that $\beta_{\mu}$ has type I and $ \mathrm{Arf}(\mu) = -1 $.

(1-c)
$ M_n(\mathbb{C}) \cong M_2(\mathbb{R}) \otimes \ldots \otimes M_2(\mathbb{R}) \otimes \mathbb{C} $,
$ n = 2^m \geq 1 $.
The grading $\Gamma$ is determined up to isomorphism by $(T,\mu)$,
where $T$ is a subgroup of $G$ isomorphic to $\mathbb{Z}_2^{2m+1}$,
and $\mu$ is a quadratic form on $T$ such that $\beta := \beta_{\mu}$ has type II and $ \mu(f_{\beta}) = -1 $.

(1-d)
$ M_n(\mathbb{C}) \cong M_2(\mathbb{R}) \otimes \ldots \otimes M_2(\mathbb{R}) \otimes M_2(\mathbb{C}) $,
$ n = 2^m \geq 2 $.
The grading $\Gamma$ is determined up to isomorphism by $(T,\beta,\mu)$,
where $T$ is a subgroup of $G$ isomorphic to $ \mathbb{Z}_2^{2m-1} \times \mathbb{Z}_4 $,
$\beta$ is an alternating bicharacter on $T$ of type II,
and $\mu$ is a quadratic form on $T_{[2]}$ such that $ \beta_{\mu} = \beta \vert_{ T_{[2]} \times T_{[2]} } $ and $ \mu(f_T) = -1 $.

(1-e)
$ M_n(\mathbb{R}) \times M_n(\mathbb{R}) \cong M_2(\mathbb{R}) \otimes \ldots \otimes M_2(\mathbb{R}) \otimes [ \mathbb{R} \times \mathbb{R} ] $,
$ n = 2^m \geq 1 $.
The grading $\Gamma$ is determined up to isomorphism by $(T,\mu)$,
where $T$ is a subgroup of $G$ isomorphic to $\mathbb{Z}_2^{2m+1}$,
and $\mu$ is a quadratic form on $T$ such that $\beta := \beta_{\mu}$ has type II,
$ \mu(f_{\beta}) = +1 $ and $ \mathrm{Arf}(\mu) = +1 $.

(1-f)
$ M_{n/2}(\mathbb{H}) \times M_{n/2}(\mathbb{H}) \cong M_2(\mathbb{R}) \otimes \ldots \otimes M_2(\mathbb{R}) \otimes \mathbb{H} \otimes [ \mathbb{R} \times \mathbb{R} ] $,
$ n = 2^m \geq 2 $.
The grading $\Gamma$ is determined up to isomorphism by $(T,\mu)$,
where $T$ is a subgroup of $G$ isomorphic to $\mathbb{Z}_2^{2m+1}$,
and $\mu$ is a quadratic form on $T$ such that $\beta := \beta_{\mu}$ has type II,
$ \mu(f_{\beta}) = +1 $ and $ \mathrm{Arf}(\mu) = -1 $.

(1-g)
$ M_n(\mathbb{R}) \times M_n(\mathbb{R}) \cong M_2(\mathbb{R}) \otimes \ldots \otimes M_2(\mathbb{R}) \otimes M_2( \mathbb{R} \times \mathbb{R} ) $,
$ n = 2^m \geq 2 $.
The grading $\Gamma$ is determined up to isomorphism by $(T,\beta,\mu)$,
where $T$ is a subgroup of $G$ isomorphic to $ \mathbb{Z}_2^{2m-1} \times \mathbb{Z}_4 $,
$\beta$ is an alternating bicharacter on $T$ of type II,
and $\mu$ is a quadratic form on $T_{[2]}$ such that $ \beta_{\mu} = \beta \vert_{ T_{[2]} \times T_{[2]} } $,
$ \mu(f_T) = +1 $, $ \mu( \mathrm{rad}'(\beta) ) = \{ +1 \} $ and $ \mathrm{Arf}(\mu) = +1 $.

(1-h)
$ M_{n/2}(\mathbb{H}) \times M_{n/2}(\mathbb{H}) \cong M_2(\mathbb{R}) \otimes \ldots \otimes M_2(\mathbb{R}) \otimes \mathbb{H} \otimes M_2( \mathbb{R} \times \mathbb{R} ) $,
$ n = 2^m \geq 4 $.
The grading $\Gamma$ is determined up to isomorphism by $(T,\beta,\mu)$,
where $T$ is a subgroup of $G$ isomorphic to $ \mathbb{Z}_2^{2m-1} \times \mathbb{Z}_4 $,
$\beta$ is an alternating bicharacter on $T$ of type II,
and $\mu$ is a quadratic form on $T_{[2]}$ such that $ \beta_{\mu} = \beta \vert_{ T_{[2]} \times T_{[2]} } $,
$ \mu(f_T) = +1 $, $ \mu( \mathrm{rad}'(\beta) ) = \{ +1 \} $ and $ \mathrm{Arf}(\mu) = -1 $.

(1-i)
$ M_n(\mathbb{R}) \times M_{n/2}(\mathbb{H}) \cong M_2(\mathbb{R}) \otimes \ldots \otimes M_2(\mathbb{R}) \otimes [ M_2(\mathbb{R}) \times \mathbb{H} ] $,
$ n = 2^m \geq 2 $.
The grading $\Gamma$ is determined up to isomorphism by $(T,\beta,\mu)$,
where $T$ is a subgroup of $G$ isomorphic to $ \mathbb{Z}_2^{2m-1} \times \mathbb{Z}_4 $,
$\beta$ is an alternating bicharacter on $T$ of type II,
and $\mu$ is a quadratic form on $T_{[2]}$ such that $ \beta_{\mu} = \beta \vert_{ T_{[2]} \times T_{[2]} } $,
$ \mu(f_T) = +1 $ and $ \mu( \mathrm{rad}'(\beta) ) = \{ -1 \} $.

\begin{proof}
Denote by $I$ the unity of $\mathcal{D}$.
If $Z(\mathcal{D})$ has dimension $2$,
denote $ Z = i I $ in the case $ Z(\mathcal{D}) = \mathbb{C} $ (where $i$ is the imaginary unit)
and $ Z = (1,-1) I $ in the case $ Z(\mathcal{D}) = \mathbb{R} \times \mathbb{R} $.
Then $Z(\mathcal{D})$ is either $\mathbb{R}I$ or $\mathbb{R}I\oplus\mathbb{R}Z$, and it is easy to show that 
$Z$ is homogeneous and its degree $f$ is an element of order $2$
(see for example \cite[Lemma 14]{Rodrigo-2016}).

Now the classification is obtained by repeating
the arguments of \cite[Theorems 15 and 16]{Rodrigo-2016}.
Remark that $ f_{\beta} = f $
and that now the case $ \mu( f_{\beta} ) = +1 $ is possible.

As an example, let us recall the construction of
a graded division algebra $\mathcal{D}$
for a given datum $(T,\mu)$ in the case (1-c), that is,
$ T \cong \mathbb{Z}_2^{2m+1} $,
$\beta_{\mu}$ of type II and $ \mu(f_{\beta}) = -1 $.
We pick a basis $ \{ a_1 , b_1 , \ldots , a_m , b_m , f_{\beta} \} $
of $T$ as in Proposition \ref{prop:BasisAltBich}.
Changing if necessary $a_j$ or $b_j$
to $ a_j f_{\beta} $ or $ b_j f_{\beta} $, respectively,
we can get $ \mu(a_j) = \mu(b_j) = +1 $
($ j = 1 , \dots , m $).
Hence we can choose as the graded algebra
$ M_2(\mathbb{R}) \otimes \ldots \otimes M_2(\mathbb{R}) \otimes \mathbb{C} $,
as it is stated in the list,
where the support of the $j$-th factor is $ \langle a_j , b_j \rangle $
($ j = 1 , \dots , m $)
and the support of the last factor is $ \langle f_{\beta} \rangle $.

The only new argument is the fact that
$ M_2(\mathbb{R}) \otimes [ M_2(\mathbb{R}) \times \mathbb{H} ] $ and
$ \mathbb{H} \otimes [ M_2(\mathbb{R}) \times \mathbb{H} ] $
are in the same equivalence class, (1-i).
Indeed, consider the symplectic basis $ \{ a_1 , b_1 , a_2 , b_2 \} $
of $ \mathbb{Z}_2^2 \times ( \mathbb{Z}_2 \times \mathbb{Z}_4 ) $
relative to the grading of
$ \mathbb{H} \otimes [ M_2(\mathbb{R}) \times \mathbb{H} ] $,
that is, $a_1$ and $b_1$ generate the support of the grading on $\mathbb{H}$,
while $a_2$ and $b_2$ generate the support of the grading on $M_2(\mathbb{R}) \times \mathbb{H}$ 
playing the roles of $a$ and $b$ in Example \ref{exam:GradDim1}, so $b_2^2 \neq e $.
The quadratic form is determined by
$ \mu(a_1) = \mu(b_1) = -1 $
and $ \mu(a_2) = -1 $, $ \mu(b_2^2) = +1 $.
Then $ a_1' = a_1 a_2 $, $ b_1' = b_1 a_2 $,
$ a_2' = a_2 $, $ b_2' = a_1 b_1 b_2 $
form another symplectic basis, but now
$ \mu(a_1') = \mu(b_1') = +1 $ and still
$ \mu(a_2') = -1 $, $ \mu((b_2')^2) = +1 $.
Therefore, we can rewrite
$ \mathbb{H} \otimes [ M_2(\mathbb{R}) \times \mathbb{H} ] $ as
$ M_2(\mathbb{R}) \otimes [ M_2(\mathbb{R}) \times \mathbb{H} ] $
by renaming the elements of the group, so these graded algebras are equivalent.
\end{proof}

\begin{remark}
If $Z(\mathcal{D})$ is $\mathbb{R}\times\mathbb{R}$ then it must be nontrivially graded, so it is isomorphic
to the group algebra of a subgroup of $G$ of order $2$, namely, $\{e,f\}$ where $f=f_\beta$. 
Hence, $\mathcal{D}$ with its $G$-grading can be obtained from a simple algebra with a grading by 
the quotient group $G/\langle f\rangle$ by means of the \emph{loop construction} (see \cite{ABFP}):
(1-e) and (1-g) from (1-a); (1-f) and (1-h) from (1-b); and (1-i) from either (1-a) or (1-b).
\end{remark}

\section{Classification in the one-dimensional case}\label{sect:Dim1}

\begin{example}\label{exam:InvOneD}
We define involutions on $M_2(\mathbb{R})$,
$\mathbb{H}$, $\mathbb{C}$ and $M_2(\mathbb{C})$
that respect the gradings of Example \ref{exam:GradDim1}.
The notation with subscripts will make sense later on,
when the classification of this section is stated.
\begin{itemize}
\item
Let $\varphi_{\operatorname{(1-a-1)}}$ be
the matrix transpose on $M_2(\mathbb{R})$.
It is an orthogonal involution with signature $2$.
\item
Let $\varphi_{\operatorname{(1-a-2)}}$ be
the involution on $M_2(\mathbb{R})$ given by
$ \varphi_{\operatorname{(1-a-2)}} (X) = A^{-1} X^T A $,
where
\begin{equation}\label{eq:A}
A = \begin{pmatrix} 1 & 0 \\ 0 & -1 \end{pmatrix}.
\end{equation}
It is an orthogonal involution with signature $0$.
\item
Let $\varphi_{\operatorname{(1-a-3)}}$ be
the involution on $M_2(\mathbb{R})$
that acts as minus the identity on the matrices of trace zero,
and acts as the identity on the center of $M_2(\mathbb{R})$.
It is a symplectic involution.
\item
Let $\varphi_{\operatorname{(1-b-1)}}$ be
the standard conjugation on $\mathbb{H}$.
It is a symplectic involution with signature $1$.
\item
Let $\varphi_{\operatorname{(1-b-3)}}$ be
the involution on $\mathbb{H}$
that acts as the identity on $1$, $i$ and $j$,
and acts as minus the identity on $k$.
It is an orthogonal involution.
\item
Let $\varphi_{\operatorname{(1-c-1)}}$ be
the conjugation on $\mathbb{C}$.
It is an involution of the second kind and has signature $1$.
\item
Let $\varphi_{\operatorname{(1-c-3)}}$ be
the identity on $\mathbb{C}$.
It is an involution of the first kind and orthogonal.
\item
Let $\varphi_{\operatorname{(1-d-1)}}$ be
the matrix transpose on $M_2(\mathbb{C})$.
It is an involution of the first kind and orthogonal.
\item
Let $\varphi_{\operatorname{(1-d-3)}}$ be
the involution on $M_2(\mathbb{C})$ given by
$ \varphi_{\operatorname{(1-a-2)}} (X) = A^{-1} X^T A $,
with $A$ as in Equation \eqref{eq:A}.
It is an involution of the first kind and orthogonal.
\item
Let $\varphi_{\operatorname{(1-d-4)}}$ be
the involution on $M_2(\mathbb{C})$
that acts as minus the identity on the matrices of trace zero,
and acts as the identity on the center of $M_2(\mathbb{C})$.
It is an involution of the first kind and symplectic.
\end{itemize}
The involution $\varphi_{\operatorname{(1-a-1)}}$ will occur most frequently,
so we will abbreviate it as $\varphi_*$.
\end{example}

Let $G$ be an abelian group,
$\mathcal{D}$ a finite-dimensional simple real (associative) algebra,
and $\Gamma$ a division $G$-grading on $\mathcal{D}$ with homogeneous components of dimension $1$.
These gradings are classified in \cite[Theorems 15 and 16]{Rodrigo-2016};
there are four families of equivalence classes: (1-a), (1-b), (1-c) and (1-d).
We keep the same notation, so let $T$ be the support of $\Gamma$,
and let $ \beta : T \times T \rightarrow \{ \pm 1 \} $ be the alternating bicharacter given by the commutation relations.

Then, any antiautomorphism $\varphi$ of the $G$-graded algebra $\mathcal{D}$ is an involution.
We want to classify the pairs $(\Gamma,\varphi)$, up to isomorphism and up to equivalence.
The isomorphism classes are in bijective correspondence with the quadratic forms $\eta$ on $T$ such that $ \beta_{\eta} = \beta $.
Namely, the correspondence is given by the equation
\begin{equation}\label{eq:Eta}
\varphi(X_t) = \eta(t) X_t
\end{equation}
for all $ X_t \in \mathcal{D}_t $.
Now we give a list of the equivalence classes
together with a representative of every class,
and we also compile the classification up to isomorphism to serve as a reference:

\medskip

(1-a)
The grading $\Gamma$ on $ \mathcal{D} \cong M_n(\mathbb{R}) $ ($ n = 2^m \geq 1 $) was determined up to isomorphism by $(T,\mu)$,
where $T$ was a subgroup of $G$ isomorphic to $\mathbb{Z}_2^{2m}$,
and $\mu$ was a quadratic form on $T$ such that $\beta := \beta_{\mu}$ had type I and $ \mathrm{Arf}(\mu) = +1 $.
Now $(\Gamma,\varphi)$ is determined up to isomorphism by $(T,\mu,\eta)$,
where $\eta$ is a quadratic form on $T$ such that $ \beta_{\eta} = \beta $.
These isomorphism classes belong to one of the following three equivalence classes:
\begin{enumerate}
	\item[(1)] $ \eta = \mu $ ($ n = 2^m \geq 1 $).\\
	The involution $\varphi$ is orthogonal with signature $n$.\\
	A representative is
	$ \varphi_* \otimes \dots \otimes \varphi_* $
	(if $n=1$, $\varphi$ is just the identity on $ \mathbb{R} $).
	\item[(2)] $ \mathrm{Arf}(\eta) = +1 $ but $ \eta \neq \mu $ ($ n = 2^m \geq 2 $).\\
	The involution $\varphi$ is orthogonal with signature $0$.\\
	A representative is
	$ \varphi_* \otimes \dots \otimes \varphi_* \otimes \varphi_{\operatorname{(1-a-2)}} $.
	\item[(3)] $ \mathrm{Arf}(\eta) = -1 $ ($ n = 2^m \geq 2 $).\\
	The involution $\varphi$ is symplectic.\\
	A representative is
	$ \varphi_* \otimes \dots \otimes \varphi_* \otimes \varphi_{\operatorname{(1-a-3)}} $.
\end{enumerate}

\medskip

(1-b)
The grading $\Gamma$ on $ \mathcal{D} \cong M_{n/2}(\mathbb{H}) $ ($ n = 2^m \geq 2 $) was determined up to isomorphism by $(T,\mu)$,
where $T$ was a subgroup of $G$ isomorphic to $\mathbb{Z}_2^{2m}$,
and $\mu$ was a quadratic form on $T$ such that $\beta := \beta_{\mu}$ had type I and $ \mathrm{Arf}(\mu) = -1 $.
Now $(\Gamma,\varphi)$ is determined up to isomorphism by $(T,\mu,\eta)$,
where $\eta$ is a quadratic form on $T$ such that $ \beta_{\eta} = \beta $.
These isomorphism classes belong to one of the following three equivalence classes:
\begin{enumerate}
	\item[(1)] $ \eta = \mu $ ($ n = 2^m \geq 2 $).\\
	The involution $\varphi$ is symplectic with signature $n/2$.\\
	A representative is
	$ \varphi_* \otimes \dots \otimes \varphi_* \otimes \varphi_{\operatorname{(1-b-1)}} $.
	\item[(2)] $ \mathrm{Arf}(\eta) = -1 $ but $ \eta \neq \mu $ ($ n = 2^m \geq 4 $).\\
	The involution $\varphi$ is symplectic with signature $0$.\\
	A representative is
	$ \varphi_* \otimes \dots \otimes \varphi_* \otimes \varphi_{\operatorname{(1-a-2)}} \otimes \varphi_{\operatorname{(1-b-1)}} $.
	\item[(3)] $ \mathrm{Arf}(\eta) = +1 $ ($ n = 2^m \geq 2 $).\\
	The involution $\varphi$ is orthogonal.\\
	A representative is
	$ \varphi_* \otimes \dots \otimes \varphi_* \otimes \varphi_{\operatorname{(1-b-3)}} $.
\end{enumerate}

\medskip

(1-c)
The grading $\Gamma$ on $ \mathcal{D} \cong M_n(\mathbb{C}) $ ($ n = 2^m \geq 1 $) was determined up to isomorphism by $(T,\mu)$,
where $T$ was a subgroup of $G$ isomorphic to $\mathbb{Z}_2^{2m+1}$,
and $\mu$ was a quadratic form on $T$ such that $\beta := \beta_{\mu}$ had type II and $ \mu(f_{\beta}) = -1 $.
Now $(\Gamma,\varphi)$ is determined up to isomorphism by $(T,\mu,\eta)$,
where $\eta$ is a quadratic form on $T$ such that $ \beta_{\eta} = \beta $.
These isomorphism classes belong to one of the following four equivalence classes:
\begin{enumerate}
	\item[(1)] $ \eta = \mu $ ($ n = 2^m \geq 1 $).\\
	The involution $\varphi$ is of the second kind and has signature $n$.\\
	A representative is
	$ \varphi_* \otimes \dots \otimes \varphi_* \otimes \varphi_{\operatorname{(1-c-1)}} $.
	\item[(2)] $ \eta(f_{\beta}) = -1 $ but $ \eta \neq \mu $ ($ n = 2^m \geq 2 $).\\
	The involution $\varphi$ is of the second kind and has signature $0$.\\
	A representative is
	$ \varphi_* \otimes \dots \otimes \varphi_* \otimes \varphi_{\operatorname{(1-a-2)}} \otimes \varphi_{\operatorname{(1-c-1)}} $.
	\item[(3)] $ \eta(f_{\beta}) = +1 $ and $ \mathrm{Arf}(\eta) = +1 $ ($ n = 2^m \geq 1 $).\\
	The involution $\varphi$ is of the first kind and orthogonal.\\
	A representative is
	$ \varphi_* \otimes \dots \otimes \varphi_* \otimes \varphi_{\operatorname{(1-c-3)}} $.
	\item[(4)] $ \eta(f_{\beta}) = +1 $ and $ \mathrm{Arf}(\eta) = -1 $ ($ n = 2^m \geq 2 $).\\
	The involution $\varphi$ is of the first kind and symplectic.\\
	A representative is
	$ \varphi_* \otimes \dots \otimes \varphi_* \otimes \varphi_{\operatorname{(1-a-3)}} \otimes \varphi_{\operatorname{(1-c-3)}} $.
\end{enumerate}

\medskip

(1-d)
The grading $\Gamma$ on $ \mathcal{D} \cong M_n(\mathbb{C}) $ ($ n = 2^m \geq 2 $) was determined up to isomorphism by $(T,\beta,\mu)$,
where $T$ was a subgroup of $G$ isomorphic to $ \mathbb{Z}_2^{2m-1} \times \mathbb{Z}_4 $,
$\beta$ was an alternating bicharacter on $T$ of type II,
and $\mu$ was a quadratic form on $T_{[2]}$ such that $ \beta_{\mu} = \beta \vert_{ T_{[2]} \times T_{[2]} } $ and $ \mu(f_T) = -1 $.
Now $(\Gamma,\varphi)$ is determined up to isomorphism by $(T,\beta,\mu,\eta)$,
where $\eta$ is a quadratic form on $T$ such that $ \beta_{\eta} = \beta $ (so $ \eta(f_T) = +1 $ by Remark \ref{rem:fT}).
These isomorphism classes belong to one of the following four equivalence classes:
\begin{enumerate}
	\item[(1)] $ \eta( \mathrm{rad}'(\beta) ) = \{ +1 \} $ and $ \mathrm{Arf}(\eta) = +1 $ ($ n = 2^m \geq 2 $).\\
	The involution $\varphi$ is of the first kind and orthogonal.\\
	A representative is
	$ \varphi_* \otimes \dots \otimes \varphi_* \otimes \varphi_{\operatorname{(1-d-1)}} $.
	\item[(2)] $ \eta( \mathrm{rad}'(\beta) ) = \{ +1 \} $ and $ \mathrm{Arf}(\eta) = -1 $ ($ n = 2^m \geq 4 $).\\
	The involution $\varphi$ is of the first kind and symplectic.\\
	A representative is
	$ \varphi_* \otimes \dots \otimes \varphi_* \otimes \varphi_{\operatorname{(1-a-3)}} \otimes \varphi_{\operatorname{(1-d-1)}} $.
	\item[(3)] $ \eta( \mathrm{rad}'(\beta) ) = \{ -1 \} $ and $ \mathrm{Arf}(\eta) = +1 $ ($ n = 2^m \geq 2 $).\\
	The involution $\varphi$ is of the first kind and orthogonal.\\
	A representative is
	$ \varphi_* \otimes \dots \otimes \varphi_* \otimes \varphi_{\operatorname{(1-d-3)}} $.
	\item[(4)] $ \eta( \mathrm{rad}'(\beta) ) = \{ -1 \} $ and $ \mathrm{Arf}(\eta) = -1 $ ($ n = 2^m \geq 2 $).\\
	The involution $\varphi$ is of the first kind and symplectic.\\
	A representative is
	$ \varphi_* \otimes \dots \otimes \varphi_* \otimes \varphi_{\operatorname{(1-d-4)}} $.
\end{enumerate}

\begin{proof}
Define $ \eta : T \rightarrow \mathbb{R}^{\times} $
by Equation \eqref{eq:Eta}.
For all $ t \in T $,
we can pick $X_t$ so that $ X_t^8 = +I $,
hence $ \eta(t)^8 = +1 $;
therefore $\eta$ takes values in $ \{ \pm 1 \} $
and $\varphi$ is an involution.
The fact that $\varphi$ reverses the order of the product
is equivalent to $\eta$ being a quadratic form
with $ \beta_{\eta} = \beta $; indeed:
\[
\eta(uv) X_v X_u
= \varphi( X_v X_u )
= \varphi(X_u) \varphi(X_v)
= \eta(u) \eta(v) \beta(u,v) X_v X_u .
\]
Thus we have a bijective correspondence between
the isomorphism classes of pairs $(\Gamma,\varphi)$ and
the quadratic forms $\eta$ on $T$ such that $ \beta_{\eta} = \beta $.

Involutions belonging to (1-a-1) or (1-a-2)
are not equivalent to those in (1-a-3),
because of the Arf invariant.
The involution (1-a-1) is determined by Equation \eqref{eq:QuadrForm},
so it is not equivalent to the involutions that belong to (1-a-2),
in other words, it is a distinguished involution of the grading.
Considering also $\eta(f_{\beta})$
and $ \eta( \mathrm{rad}'(\beta) ) $,
we see that the rest of the equivalence classes of the list do not overlap.

We know that there exist quadratic forms $\eta$
for the indicated values of $n$
because of Section \ref{sect:GradQF}.

The tricky point is to prove that
involutions that lie in the same item of the list are equivalent.
The idea is to write any $\varphi$ in a given equivalence class
as the representative that we indicated in the list.
Let us start with the case (1-a-2),
so assume that $ T \cong \mathbb{Z}_2^{2m} $,
$\beta$ has type I,
$ \mathrm{Arf}(\mu) = \mathrm{Arf}(\eta) = +1 $,
and $ \mu \neq \eta $.
By Lemmas \ref{lem:index2} and \ref{lem:Ind2SubgrTyp1},
there exists $ b_1 \in T $ such that
$ b_1^{\perp} = \{ t \in T \mid \mu(t) = \eta(t) \} $.
We are going to prove that $ \mu(b_1) = +1 $.
Take $ c \in T \setminus b_1^{\perp} $,
so $ T \cong \langle c \rangle \times b_1^{\perp} $.
Because of Equation \eqref{eq:BetaMu},
$\mu$ takes the value $-1$ either once or three times
on $ \langle c,b_1 \rangle $.
Since $\eta$ has the same Arf invariant as $\mu$ and
$ T \cong \langle c,b_1 \rangle \times \langle c,b_1 \rangle^{\perp} $,
$\eta$ takes the value $-1$ on $ \langle c,b_1 \rangle $
as many times as $\mu$.
This number cannot be three,
because $ \mu(c) \neq \eta(c) $,
so $ \mu(b_1) = \eta(b_1) = +1 $.

We can take $ a_1 \in T $, and then inductively
$ a_2 , b_2 , \ldots , a_m , b_m \in T $ so that
$ \{ a_1 , \allowbreak b_1 , \allowbreak \ldots , \allowbreak a_m , \allowbreak b_m \} $
is a symplectic basis as defined before Proposition \ref{prop:BasisAltBich}
(follow, for example, the arguments in \cite[Equation (2.6) on p.~36]{EK-2013}).
Moreover, since $ \mathrm{Arf}(\mu) = +1 $ and $ \mu(b_1) = +1 $,
we can argue as in the last paragraph of
\cite[proof of Theorems 15 and 16]{Rodrigo-2016}
and assume that our symplectic basis satisfies
\[
\mu(a_1) = \mu(b_1) = \ldots = \mu(a_m) = \mu(b_m) = +1.
\]
By construction, this implies
\[
\eta(a_1) = -1
\quad\text{and}\quad
\eta(b_1) = \ldots = \eta(a_m) = \eta(b_m) = +1.
\]
We have shown that any $\varphi$ in (1-a-2) can be
written as $ \varphi_{\operatorname{(1-a-2)}} \otimes \varphi_* \otimes \dots \otimes \varphi_* $,
thus they are all equivalent.

The same reasoning works for (1-a-3), but note
that now $ \mu(b_1) = \eta(b_1) = -1 $.
Analogously for (1-b-2), (1-b-3) and (1-c-2), but
in this last case we use Lemma \ref{lem:Ind2SubgrTyp2}
instead of Lemma \ref{lem:Ind2SubgrTyp1},
and we may replace $b_1$ by $ b_1 f_{\beta} $ so that $ \mu(b_1) = +1 $.
In the cases (1-c-3), (1-c-4), (1-d-1), (1-d-2), (1-d-3) and (1-d-4),
we cannot apply Lemma \ref{lem:Ind2SubgrTyp2},
but in fact they are easier, because
$ \mu(f_{\beta}) = -1 $ whereas $ \eta(f_{\beta}) = +1 $.
We can first pick $ a_1 , b_1 , \ldots , a_m , b_m \in T $
so that $\eta$ takes the values that we want on them.
Then, changing, if necessary, the $a_i$ and $b_j$
to $ a_i f_{\beta} $ and $ b_j f_{\beta} $,
we can also select the values taken by $\mu$.
For example, in the case (1-c-3),
$\eta$ is a quadratic form on $ T \cong \mathbb{Z}_2^{2m+1} $
such that $\beta_{\eta}$ has type II,
$ \eta(f_{\beta}) = +1 $ and $ \mathrm{Arf}(\eta) = +1 $.
This means that $\eta$ belongs to the item (1-e)
of the list of Section \ref{sect:GradQF},
hence there exists a basis
$ \{ a_1 , b_1 , \ldots , a_m , b_m , f_{\beta} \} $ of $T$
as in Proposition \ref{prop:BasisAltBich} such that
$ \eta(a_1) = \eta(b_1) = \dots = \eta(a_m) = \eta(b_m) = 1 $
(and $ \eta(f_{\beta}) = 1 $).
We can assume without loss of generality that also
$ \mu(a_1) = \mu(b_1) = \dots = \mu(a_m) = \mu(b_m) = 1 $
(and $ \mu(f_{\beta}) = -1 $).
Therefore any $\varphi$ can be written as
$ \varphi_* \otimes \dots \otimes \varphi_* \otimes \varphi_{\operatorname{(1-c-3)}} $.

Finally, in order to compute the signatures
it is enough to apply Lemma \ref{lem:SignReal}
to the representative of every equivalence class,
since we already calculated the signature
of each factor in Example \ref{exam:InvOneD}.
\end{proof}

\begin{remark}\label{rem:all_inv_1}
Recall from Lemma \ref{lem:InnAut} that,
given an involution $\varphi$
on the graded algebra $\mathcal{D}$,
we can obtain the rest of the involutions
(of the same kind, if $ \mathcal{D} \cong M_n(\mathbb{C}) $)
as $ \mathrm{Int}(X_u) \circ \varphi $,
where $u$ runs through $T$.
If $\eta$ is the quadratic form on $T$
corresponding to $\varphi$,
then the quadratic form $ \eta_u : T \rightarrow \{ \pm 1 \} $
corresponding to $ \mathrm{Int}(X_u) \circ \varphi $
is given by
$ \eta_u(v) = \beta(u,v) \eta(v) = \eta(uv) \eta(u) $.
In particular,
$ \mathrm{Arf}(\eta_u) = \mathrm{Arf}(\eta) \eta(u) $
if the Arf invariant is defined.
\end{remark}

\section{Classification in the two-dimensional non-complex case}\label{sect:Dim2NonComplex}

\begin{example}
Consider the division grading by the group $\mathbb{Z}_2$ on the algebra $M_2(\mathbb{R})$
obtained by coarsening of the grading of Example \ref{exam:GradDim1}:
\[ M_2(\mathbb{R}) = \left[
\mathbb{R} \begin{pmatrix} 1 & 0 \\ 0 & 1 \end{pmatrix} \oplus
\mathbb{R} \begin{pmatrix} 0 & -1 \\ 1 & 0 \end{pmatrix} \right] \oplus \left[
\mathbb{R} \begin{pmatrix} 0 & 1 \\ 1 & 0 \end{pmatrix} \oplus
\mathbb{R} \begin{pmatrix} -1 & 0 \\ 0 & 1 \end{pmatrix} \right] . \]
When $M_2(\mathbb{R})$ is endowed with this grading,
we denote the involutions of Example \ref{exam:InvOneD} as:
\begin{itemize}
\item
$ \varphi_{\operatorname{(2-a-1)}} = \varphi_{\operatorname{(1-a-2)}} $;
\item
$ \varphi_{\operatorname{(2-a-3)}} = \varphi_{\operatorname{(1-a-1)}} $;
\item
$ \varphi_{\operatorname{(2-a-5)}} = \varphi_{\operatorname{(1-a-3)}} $.
\end{itemize}
Analogously, if $\mathbb{H}$ is $\mathbb{Z}_2$-graded as
$ \mathbb{H} = \left[ \mathbb{R}1 \oplus \mathbb{R}i \right]
\oplus \left[ \mathbb{R}j \oplus \mathbb{R}k \right] $, we denote:
\begin{itemize}
\item
$ \varphi_{\operatorname{(2-b-2)}} = \varphi_{\operatorname{(1-b-3)}} $;
\item
$ \varphi_{\operatorname{(2-b-3)}} = \varphi_{\operatorname{(1-b-1)}} $;
\item
we also denote $\varphi_{\operatorname{(2-b-5)}}$
the involution on $\mathbb{H}$
that acts as the identity on $1$, $j$ and $k$,
and acts as minus the identity on $i$;
it is an orthogonal involution.
\end{itemize}
Finally, consider the division grading by the group $\mathbb{Z}_4$ on the algebra $M_2(\mathbb{C})$
obtained by the coarsening of the grading of Figure \ref{fig:DivGrad} that joins,
for all $ t \in \langle a \rangle \times \langle b \rangle
\cong \mathbb{Z}_2 \times \mathbb{Z}_4 $,
the homogeneous component of degree $t$
to the homogeneous component of degree $ a b^2 t $.
When $M_2(\mathbb{C})$ is endowed with this grading,
we denote the involutions of Example \ref{exam:InvOneD} as:
\begin{itemize}
\item
$ \varphi_{\operatorname{(2-e-1)}} = \varphi_{\operatorname{(1-d-1)}} $;
\item
$ \varphi_{\operatorname{(2-e-3)}} = \varphi_{\operatorname{(1-d-3)}} $;
\item
$ \varphi_{\operatorname{(2-e-4)}} = \varphi_{\operatorname{(1-d-4)}} $.
\end{itemize}
\end{example}

Let $G$ be an abelian group,
$\mathcal{D}$ a finite-dimensional simple real (associative) algebra,
and $\Gamma$ a division $G$-grading on $\mathcal{D}$ with homogeneous components of dimension $2$
such that the identity component does not coincide with the center of $\mathcal{D}$.
These gradings are classified in \cite[Theorems 22 and 23]{Rodrigo-2016};
there are five families of equivalence classes: (2-a), (2-b), (2-c), (2-d) and (2-e).
We keep the same notation,
so write $ \mathcal{D}_e = \mathbb{R}I \oplus \mathbb{R}J $ ($ \cong \mathbb{C} $), where $I$ is the unity of $\mathcal{D}$ and $J^2=-I$;
and let $T$ be the support of $\Gamma$, $K$ the support of the centralizer of the identity component,
and $ \beta : K \times K \rightarrow \{ \pm 1 \} $ the alternating bicharacter given by the commutation relations in the centralizer of the identity component.

Then,
for any antiautomorphism $\varphi$ of the $G$-graded algebra $\mathcal{D}$, either $\varphi(J)=+J$ or $\varphi(J)=-J$.
We want to classify the pairs $(\Gamma,\varphi)$, up to isomorphism and up to equivalence, when $\varphi$ is an involution.
In the case $\varphi(J)=+J$, any antiautomorphism is an involution,
and there is exactly one proper refinement of $\Gamma$ compatible with a given involution;
the isomorphism classes are in bijective correspondence with the quadratic forms $\eta$ on $K$ such that $ \beta_{\eta} = \beta $
(and, in the case (2-e), $ \eta(f_T) = +1 $)
by means of the equation:
\begin{equation}
\varphi(X_k) = \eta(k) X_k
\end{equation}
for all $ k \in K $ and $ X_k \in \mathcal{D}_k $.
In the case $\varphi(J)=-J$, there are antiautomorphisms that are not involutions,
but any proper refinement of $\Gamma$ is compatible with a given involution;
the isomorphism classes
of involutions
are in bijective correspondence with the nice maps $\omega$ on $ T \setminus K $ such that $ \beta_{\omega} = \beta $
(and, in the case (2-e), $ \omega(f_T) = +1 $)
by means of the equation:
\begin{equation}
\varphi(X_t) = \omega(t) X_t
\end{equation}
for all $ t \in T \setminus K $ and $ X_t \in \mathcal{D}_t $.
Now we give a list of the equivalence classes
together with a representative of every class,
and we also compile the classification up to isomorphism to serve as a reference:

\medskip

(2-a)
The grading $\Gamma$ on $ \mathcal{D} \cong M_n(\mathbb{R}) $ ($ n = 2^m \geq 2 $) was determined up to isomorphism by $(T,K,\nu)$,
where $T$ was a subgroup of $G$ isomorphic to $\mathbb{Z}_2^{2m-1}$,
$K$ was a subgroup of $T$ of index $2$,
and $\nu$ was a nice map on $ T \setminus K $ such that $\beta := \beta_{\nu}$ had type I and $ \mathrm{Arf}(\nu) = +1 $.
Now, in the case $\varphi(J)=+J$, $(\Gamma,\varphi)$ is determined up to isomorphism by $(T,K,\nu,\eta)$,
where $\eta$ is a quadratic form on $K$ such that $ \beta_{\eta} = \beta $.
These isomorphism classes belong to one of the following two equivalence classes:
\begin{enumerate}
	\item[(1)] $ \mathrm{Arf}(\eta) = +1 $ ($ n = 2^m \geq 2 $).\\
	The involution $\varphi$ is orthogonal with signature $0$.\\
	A representative is
	$ \varphi_{\operatorname{(2-a-1)}} \otimes \varphi_* \otimes \dots \otimes \varphi_* $.
	\item[(2)] $ \mathrm{Arf}(\eta) = -1 $ ($ n = 2^m \geq 4 $).\\
	The involution $\varphi$ is symplectic.\\
	A representative is
	$ \varphi_{\operatorname{(2-a-1)}} \otimes \varphi_* \otimes \dots \otimes \varphi_* \otimes \varphi_{\operatorname{(1-a-3)}} $.
\end{enumerate}
On the other hand, in the case $\varphi(J)=-J$, $(\Gamma,\varphi)$ is determined up to isomorphism by
$ ( T , \allowbreak K , \allowbreak \nu , \allowbreak \omega ) $,
where $\omega$ is a nice map on $ T \setminus K $ such that $ \beta_{\omega} = \beta $.
These isomorphism classes belong to one of the following four equivalence classes:
\begin{enumerate}
	\item[(3)] $ \omega = \nu $ ($ n = 2^m \geq 2 $).\\
	The involution $\varphi$ is orthogonal with signature $n$.\\
	A representative is
	$ \varphi_{\operatorname{(2-a-3)}} \otimes \varphi_* \otimes \dots \otimes \varphi_* $.
	\item[(4)] $ \mathrm{Arf}(\omega) = +1 $ but $ \omega \neq \nu $ ($ n = 2^m \geq 4 $).\\
	The involution $\varphi$ is orthogonal with signature $0$.\\
	A representative is
	$ \varphi_{\operatorname{(2-a-3)}} \otimes \varphi_* \otimes \dots \otimes \varphi_* \otimes \varphi_{\operatorname{(1-a-2)}} $.
	\item[(5)] $ \omega = -\nu $ ($ n = 2^m \geq 2 $).\\
	The involution $\varphi$ is symplectic.\\
	A representative is
	$ \varphi_{\operatorname{(2-a-5)}} \otimes \varphi_* \otimes \dots \otimes \varphi_* $.
	\item[(6)] $ \mathrm{Arf}(\omega) = -1 $ but $ \omega \neq -\nu $ ($ n = 2^m \geq 4 $).\\
	The involution $\varphi$ is symplectic.\\
	A representative is
	$ \varphi_{\operatorname{(2-a-5)}} \otimes \varphi_* \otimes \dots \otimes \varphi_* \otimes \varphi_{\operatorname{(1-a-2)}} $.
\end{enumerate}

\medskip

(2-b)
The grading $\Gamma$ on $ \mathcal{D} \cong M_{n/2}(\mathbb{H}) $ ($ n = 2^m \geq 2 $) was determined up to isomorphism by $(T,K,\nu)$,
where $T$ was a subgroup of $G$ isomorphic to $\mathbb{Z}_2^{2m-1}$,
$K$ was a subgroup of $T$ of index $2$,
and $\nu$ was a nice map on $ T \setminus K $ such that $\beta := \beta_{\nu}$ had type I and $ \mathrm{Arf}(\nu) = -1 $.
Now, in the case $\varphi(J)=+J$, $(\Gamma,\varphi)$ is determined up to isomorphism by $(T,K,\nu,\eta)$,
where $\eta$ is a quadratic form on $K$ such that $ \beta_{\eta} = \beta $.
These isomorphism classes belong to one of the following two equivalence classes:
\begin{enumerate}
	\item[(1)] $ \mathrm{Arf}(\eta) = -1 $ ($ n = 2^m \geq 4 $).\\
	The involution $\varphi$ is symplectic with signature $0$.\\
	A representative is
	$ \varphi_{\operatorname{(2-b-2)}} \otimes \varphi_* \otimes \dots \otimes \varphi_* \otimes \varphi_{\operatorname{(1-a-3)}} $.
	\item[(2)] $ \mathrm{Arf}(\eta) = +1 $ ($ n = 2^m \geq 2 $).\\
	The involution $\varphi$ is orthogonal.\\
	A representative is
	$ \varphi_{\operatorname{(2-b-2)}} \otimes \varphi_* \otimes \dots \otimes \varphi_* $.
\end{enumerate}
On the other hand, in the case $\varphi(J)=-J$, $(\Gamma,\varphi)$ is determined up to isomorphism by
$ ( T , \allowbreak K , \allowbreak \nu , \allowbreak \omega ) $,
where $\omega$ is a nice map on $ T \setminus K $ such that $ \beta_{\omega} = \beta $.
These isomorphism classes belong to one of the following four equivalence classes:
\begin{enumerate}
	\item[(3)] $ \omega = \nu $ ($ n = 2^m \geq 2 $).\\
	The involution $\varphi$ is symplectic with signature $n/2$.\\
	A representative is
	$ \varphi_{\operatorname{(2-b-3)}} \otimes \varphi_* \otimes \dots \otimes \varphi_* $.
	\item[(4)] $ \mathrm{Arf}(\omega) = -1 $ but $ \omega \neq \nu $ ($ n = 2^m \geq 4 $).\\
	The involution $\varphi$ is symplectic with signature $0$.\\
	A representative is
	$ \varphi_{\operatorname{(2-b-3)}} \otimes \varphi_* \otimes \dots \otimes \varphi_* \otimes \varphi_{\operatorname{(1-a-2)}} $.
	\item[(5)] $ \omega = -\nu $ ($ n = 2^m \geq 2 $).\\
	The involution $\varphi$ is orthogonal.\\
	A representative is
	$ \varphi_{\operatorname{(2-b-5)}} \otimes \varphi_* \otimes \dots \otimes \varphi_* $.
	\item[(6)] $ \mathrm{Arf}(\omega) = +1 $ but $ \omega \neq -\nu $ ($ n = 2^m \geq 4 $).\\
	The involution $\varphi$ is orthogonal.\\
	A representative is
	$ \varphi_{\operatorname{(2-b-5)}} \otimes \varphi_* \otimes \dots \otimes \varphi_* \otimes \varphi_{\operatorname{(1-a-2)}} $.
\end{enumerate}

\medskip

(2-c)
The grading $\Gamma$ on $ \mathcal{D} \cong M_n(\mathbb{C}) $ ($ n = 2^m \geq 2 $) was determined up to isomorphism by $(T,K,\nu)$,
where $T$ was a subgroup of $G$ isomorphic to $\mathbb{Z}_2^{2m}$,
$K$ was a subgroup of $T$ of index $2$,
and $\nu$ was a nice map on $ T \setminus K $ such that $\beta := \beta_{\nu}$ had type II and $ \nu(f_{\beta}) = -1 $.
Now, in the case $\varphi(J)=+J$, $(\Gamma,\varphi)$ is determined up to isomorphism by $(T,K,\nu,\eta)$,
where $\eta$ is a quadratic form on $K$ such that $ \beta_{\eta} = \beta $.
These isomorphism classes belong to one of the following three equivalence classes:
\begin{enumerate}
	\item[(1)] $ \eta(f_{\beta}) = -1 $ ($ n = 2^m \geq 2 $).\\
	The involution $\varphi$ is of the second kind and has signature $0$.\\
	A representative is
	$ \varphi_{\operatorname{(2-a-1)}} \otimes \varphi_* \otimes \dots \otimes \varphi_* \otimes \varphi_{\operatorname{(1-c-1)}} $.
	\item[(2)] $ \eta(f_{\beta}) = +1 $ and $ \mathrm{Arf}(\eta) = +1 $ ($ n = 2^m \geq 2 $).\\
	The involution $\varphi$ is of the first kind and orthogonal.\\
	A representative is
	$ \varphi_{\operatorname{(2-a-1)}} \otimes \varphi_* \otimes \dots \otimes \varphi_* \otimes \varphi_{\operatorname{(1-c-3)}} $.
	\item[(3)] $ \eta(f_{\beta}) = +1 $ and $ \mathrm{Arf}(\eta) = -1 $ ($ n = 2^m \geq 4 $).\\
	The involution $\varphi$ is of the first kind and symplectic.\\
	A representative is
	$ \varphi_{\operatorname{(2-a-1)}} \otimes \varphi_* \otimes \dots \otimes \varphi_*
	\otimes \varphi_{\operatorname{(1-a-3)}} \otimes \varphi_{\operatorname{(1-c-3)}} $.
\end{enumerate}
On the other hand, in the case $\varphi(J)=-J$, $(\Gamma,\varphi)$ is determined up to isomorphism by
$ ( T , \allowbreak K , \allowbreak \nu , \allowbreak \omega ) $,
where $\omega$ is a nice map on $ T \setminus K $ such that $ \beta_{\omega} = \beta $.
These isomorphism classes belong to one of the following five equivalence classes:
\begin{enumerate}
	\item[(4)] $ \omega = \nu $ ($ n = 2^m \geq 2 $).\\
	The involution $\varphi$ is of the second kind and has signature $n$.\\
	A representative is
	$ \varphi_{\operatorname{(2-a-3)}} \otimes \varphi_* \otimes \dots \otimes \varphi_* \otimes \varphi_{\operatorname{(1-c-1)}} $.
	\item[(5)] $ \omega = -\nu $ ($ n = 2^m \geq 2 $).\\
	The involution $\varphi$ is of the second kind and has signature $0$.\\
	A representative is
	$ \varphi_{\operatorname{(2-a-5)}} \otimes \varphi_* \otimes \dots \otimes \varphi_* \otimes \varphi_{\operatorname{(1-c-1)}} $.
	\item[(6)] $ \omega(f_{\beta}) = -1 $ but $ \omega \neq \pm \nu $ ($ n = 2^m \geq 4 $).\\
	The involution $\varphi$ is of the second kind and has signature $0$.\\
	A representative is
	$ \varphi_{\operatorname{(2-a-3)}} \otimes \varphi_* \otimes \dots \otimes \varphi_*
	\otimes \varphi_{\operatorname{(1-a-2)}} \otimes \varphi_{\operatorname{(1-c-1)}} $.
	\item[(7)] $ \omega(f_{\beta}) = +1 $ and $ \mathrm{Arf}(\omega) = +1 $ ($ n = 2^m \geq 2 $).\\
	The involution $\varphi$ is of the first kind and orthogonal.\\
	A representative is
	$ \varphi_{\operatorname{(2-a-3)}} \otimes \varphi_* \otimes \dots \otimes \varphi_* \otimes \varphi_{\operatorname{(1-c-3)}} $.
	\item[(8)] $ \omega(f_{\beta}) = +1 $ and $ \mathrm{Arf}(\omega) = -1 $ ($ n = 2^m \geq 2 $).\\
	The involution $\varphi$ is of the first kind and symplectic.\\
	A representative is
	$ \varphi_{\operatorname{(2-a-5)}} \otimes \varphi_* \otimes \dots \otimes \varphi_* \otimes \varphi_{\operatorname{(1-c-3)}} $.
\end{enumerate}

\medskip

(2-d)
The grading $\Gamma$ on $ \mathcal{D} \cong M_n(\mathbb{C}) $ ($ n = 2^m \geq 4 $) was determined up to isomorphism by $(T,K,\beta,\nu)$,
where $T$ was a subgroup of $G$ isomorphic to $ \mathbb{Z}_2^{2m-2} \times \mathbb{Z}_4 $,
$K$ was a subgroup of $T$ of index $2$ but different from $T_{[2]}$,
$\beta$ was an alternating bicharacter on $K$ of type II,
and $\nu$ was a nice map on $ T_{[2]} \setminus K_{[2]} $ such that $ \beta_{\nu} = \beta \vert_{ K_{[2]} \times K_{[2]} } $ and $ \nu(f_T) = -1 $.
Now, in the case $\varphi(J)=+J$, $(\Gamma,\varphi)$ is determined up to isomorphism by $(T,K,\beta,\nu,\eta)$,
where $\eta$ is a quadratic form on $K$ such that $ \beta_{\eta} = \beta $ (so $ \eta(f_T) = +1 $).
These isomorphism classes belong to one of the following four equivalence classes:
\begin{enumerate}
	\item[(1)] $ \eta( \mathrm{rad}'(\beta) ) = \{ +1 \} $ and $ \mathrm{Arf}(\eta) = +1 $ ($ n = 2^m \geq 4 $).\\
	The involution $\varphi$ is of the first kind and orthogonal.\\
	A representative is
	$ \varphi_{\operatorname{(2-a-1)}} \otimes \varphi_* \otimes \dots \otimes \varphi_* \otimes \varphi_{\operatorname{(1-d-1)}} $.
	\item[(2)] $ \eta( \mathrm{rad}'(\beta) ) = \{ +1 \} $ and $ \mathrm{Arf}(\eta) = -1 $ ($ n = 2^m \geq 8 $).\\
	The involution $\varphi$ is of the first kind and symplectic.\\
	A representative is
	$ \varphi_{\operatorname{(2-a-1)}} \otimes \varphi_* \otimes \dots \otimes \varphi_*
	\otimes \varphi_{\operatorname{(1-a-3)}} \otimes \varphi_{\operatorname{(1-d-1)}} $.
	\item[(3)] $ \eta( \mathrm{rad}'(\beta) ) = \{ -1 \} $ and $ \mathrm{Arf}(\eta) = +1 $ ($ n = 2^m \geq 4 $).\\
	The involution $\varphi$ is of the first kind and orthogonal.\\
	A representative is
	$ \varphi_{\operatorname{(2-a-1)}} \otimes \varphi_* \otimes \dots \otimes \varphi_* \otimes \varphi_{\operatorname{(1-d-3)}} $.
	\item[(4)] $ \eta( \mathrm{rad}'(\beta) ) = \{ -1 \} $ and $ \mathrm{Arf}(\eta) = -1 $ ($ n = 2^m \geq 4 $).\\
	The involution $\varphi$ is of the first kind and symplectic.\\
	A representative is
	$ \varphi_{\operatorname{(2-a-1)}} \otimes \varphi_* \otimes \dots \otimes \varphi_* \otimes \varphi_{\operatorname{(1-d-4)}} $.
\end{enumerate}
On the other hand, in the case $\varphi(J)=-J$, $(\Gamma,\varphi)$ is determined up to isomorphism by
$ ( T , \allowbreak K , \allowbreak \beta , \allowbreak \nu , \allowbreak \omega ) $,
where $\omega$ is a nice map on $ T \setminus K $ such that $ \beta_{\omega} = \beta $ (so $ \omega(f_T) = +1 $).
These isomorphism classes belong to one of the following four equivalence classes:
\begin{enumerate}
	\item[(5)] $ \omega( \mathrm{rad}'(\beta) ) = \{ +1 \} $ and $ \mathrm{Arf}(\omega) = +1 $ ($ n = 2^m \geq 4 $).\\
	The involution $\varphi$ is of the first kind and orthogonal.\\
	A representative is
	$ \varphi_{\operatorname{(2-a-3)}} \otimes \varphi_* \otimes \dots \otimes \varphi_* \otimes \varphi_{\operatorname{(1-d-1)}} $.
	\item[(6)] $ \omega( \mathrm{rad}'(\beta) ) = \{ +1 \} $ and $ \mathrm{Arf}(\omega) = -1 $ ($ n = 2^m \geq 8 $).\\
	The involution $\varphi$ is of the first kind and symplectic.\\
	A representative is
	$ \varphi_{\operatorname{(2-a-3)}} \otimes \varphi_* \otimes \dots \otimes \varphi_*
	\otimes \varphi_{\operatorname{(1-a-3)}} \otimes \varphi_{\operatorname{(1-d-1)}} $.
	\item[(7)] $ \omega( \mathrm{rad}'(\beta) ) = \{ -1 \} $ and $ \mathrm{Arf}(\omega) = +1 $ ($ n = 2^m \geq 4 $).\\
	The involution $\varphi$ is of the first kind and orthogonal.\\
	A representative is
	$ \varphi_{\operatorname{(2-a-3)}} \otimes \varphi_* \otimes \dots \otimes \varphi_* \otimes \varphi_{\operatorname{(1-d-3)}} $.
	\item[(8)] $ \omega( \mathrm{rad}'(\beta) ) = \{ -1 \} $ and $ \mathrm{Arf}(\omega) = -1 $ ($ n = 2^m \geq 4 $).\\
	The involution $\varphi$ is of the first kind and symplectic.\\
	A representative is
	$ \varphi_{\operatorname{(2-a-3)}} \otimes \varphi_* \otimes \dots \otimes \varphi_* \otimes \varphi_{\operatorname{(1-d-4)}} $.
\end{enumerate}

\medskip

(2-e)
The grading $\Gamma$ on $ \mathcal{D} \cong M_n(\mathbb{C}) $ ($ n = 2^m \geq 2 $) was determined up to isomorphism by $(T,[\nu])$,
where $T$ was a subgroup of $G$ isomorphic to $ \mathbb{Z}_2^{2m-2} \times \mathbb{Z}_4 $ ($ K = T_{[2]} $),
and $[\nu]$ was an equivalence class of nice maps $\nu$ on $ T \setminus T_{[2]} $ such that
$\beta := \beta_{\nu}$ had type II, $ f_{\beta} = f_T $ and $ \nu(f_T) = -1 $,
with the equivalence relation $ \nu \sim \nu' $ if either $ \nu' = \nu $ or $ \nu' = -\nu $.
Now, in the case $\varphi(J)=+J$, $(\Gamma,\varphi)$ is determined up to isomorphism by $(T,[\nu],\eta)$,
where $\eta$ is a quadratic form on $T_{[2]}$ such that $ \beta_{\eta} = \beta $ and $ \eta(f_T) = +1 $.
These isomorphism classes belong to one of the following two equivalence classes:
\begin{enumerate}
	\item[(1)] $ \mathrm{Arf}(\eta) = +1 $ ($ n = 2^m \geq 2 $).\\
	The involution $\varphi$ is of the first kind and orthogonal.\\
	A representative is
	$ \varphi_{\operatorname{(2-e-1)}} \otimes \varphi_* \otimes \dots \otimes \varphi_* $.
	\item[(2)] $ \mathrm{Arf}(\eta) = -1 $ ($ n = 2^m \geq 4 $).\\
	The involution $\varphi$ is of the first kind and symplectic.\\
	A representative is
	$ \varphi_{\operatorname{(2-e-1)}} \otimes \varphi_* \otimes \dots \otimes \varphi_* \otimes \varphi_{\operatorname{(1-a-3)}} $.
\end{enumerate}
On the other hand, in the case $\varphi(J)=-J$, $(\Gamma,\varphi)$ is determined up to isomorphism by $(T,[\nu],\omega)$,
where $\omega$ is a nice map on $ T \setminus T_{[2]} $ such that $ \beta_{\omega} = \beta $ and $ \omega(f_T) = +1 $.
These isomorphism classes belong to one of the following two equivalence classes:
\begin{enumerate}
	\item[(3)] $ \mathrm{Arf}(\omega) = +1 $ ($ n = 2^m \geq 2 $).\\
	The involution $\varphi$ is of the first kind and orthogonal.\\
	A representative is
	$ \varphi_{\operatorname{(2-e-3)}} \otimes \varphi_* \otimes \dots \otimes \varphi_* $.
	\item[(4)] $ \mathrm{Arf}(\omega) = -1 $ ($ n = 2^m \geq 2 $).\\
	The involution $\varphi$ is of the first kind and symplectic.\\
	A representative is
	$ \varphi_{\operatorname{(2-e-4)}} \otimes \varphi_* \otimes \dots \otimes \varphi_* $.
\end{enumerate}

\begin{proof}
Let $ g \in T \setminus K$.
We start with the case (2-a).
We know from \cite[Theorem 22]{Rodrigo-2016} that
we can write $\mathcal{D}$ as follows:
\begin{equation}
( \mathcal{D}_e \oplus \mathcal{D}_g )
\otimes_{\mathbb{R}}
C_{\mathcal{D}} ( \mathcal{D}_e \oplus \mathcal{D}_g ).
\end{equation}
Recall that
$ C_{\mathcal{D}} ( \mathcal{D}_e \oplus \mathcal{D}_g ) $
is a subalgebra isomorphic to
$M_{n/2}(\mathbb{R})$ or $M_{n/4}(\mathbb{H})$,
endowed with a division grading
whose homogeneous components have dimension $1$.
Since $ \varphi ( \mathcal{D}_e \oplus \mathcal{D}_g ) =
\mathcal{D}_e \oplus \mathcal{D}_g $, also
$ \varphi ( C_{\mathcal{D}} ( \mathcal{D}_e \oplus \mathcal{D}_g ) )
= C_{\mathcal{D}} ( \mathcal{D}_e \oplus \mathcal{D}_g ) $.
Therefore we have reduced the problem to
the study of antiautomorphisms on
$ \mathcal{D}_e \oplus \mathcal{D}_g $,
which is isomorphic either to
$M_2(\mathbb{R})$ if $ \nu(g) = +1 $,
or to $\mathbb{H}$ if $ \nu(g) = -1 $.

If $ \varphi(J) = +J $, then $\varphi$ is
$\mathcal{D}_e$-semilinear on $\mathcal{D}_g$,
hence there exists $ X \in \mathcal{D}_g $
such that $ \varphi(X) = X $
(and $ \varphi(JX) = -JX $).
Therefore, $\varphi$ is an involution and
it is only compatible with the proper refinement
that splits $\mathcal{D}_g$ as
$ \mathbb{R} X \oplus \mathbb{R} JX $.
It is  straightforward to check the assertions about the isomorphisms classes.
Let us see that involutions that lie
in the same item of the list
are equivalent.
If $ n \geq 4 $, we can always choose $g$ such that
$ \nu(g) = +1 $ and the quadratic form
$ \mu_g(k) := \nu(gk) \nu(g)^{-1} $
is different from $\eta$,
so we can write any involution in (2-a-1) (respectively (2-a-2))
as the tensor product of
an involution on $M_2(\mathbb{R})$ and
an involution on $M_{n/2}(\mathbb{R})$
that lies in (1-a-2) (respectively (1-a-3)),
hence they are all equivalent.

If $ \varphi(J) = -J $, then
$ \varphi \vert_{\mathcal{D}_g} =
\lambda \, \mathrm{id}_{\mathcal{D}_g} $,
where $ \lambda \in \mathcal{D}_e $.
Therefore, $\varphi$ is an involution
if and only if $ \lambda = \pm 1 $ and,
in that case,
every refinement is compatible with $\varphi$.
Again,
we can always choose $g$ such that
$ \nu(g) = +1 $ and
$ \omega(g) = +1 $ (respectively $ \omega(g) = -1 $),
so we can write any involution in (2-a-4) (respectively (2-a-6))
as the tensor product of
an involution on $M_2(\mathbb{R})$ and
an involution on $M_{n/2}(\mathbb{R})$
that lies in (1-a-2),
hence they are all equivalent.

The same arguments work for (2-b) and (2-c),
and also for the case (2-d), which is, in fact, easier
because there is no distinguished involution.

Let us now consider the remaining case (2-e).
Any proper refinement of the grading
has to split $\mathcal{D}_k$,
for all $ k \in T_{[2]} $,
as $ \mathcal{D}_k^{+} \oplus \mathcal{D}_k^{-} $,
where the squares of the elements
in $\mathcal{D}_k^{+}$ (respectively $\mathcal{D}_k^{-}$)
are positive (respectively negative) multiples of $I$.
Also recall from \cite[Remark 21]{Rodrigo-2016} that,
if $ X \in \mathcal{D}_g $,
then there exists a proper refinement of the grading
such that the element $X$ is still homogeneous;
this implies that $\mathcal{D}_{gk}$ splits as
$ X \mathcal{D}_k^{+} \oplus X \mathcal{D}_k^{-} $
for all $ k \in T_{[2]} $.

We have 
$ \varphi( \mathcal{D}_k^{+} ) = \mathcal{D}_k^{+} $ and
$ \varphi( \mathcal{D}_k^{-} ) = \mathcal{D}_k^{-} $
for all $ k \in T_{[2]} $.
Assume that $ \varphi(J) = +J $.
As before, $\varphi$ is
$\mathcal{D}_e$-semilinear on $\mathcal{D}_g$
and there exists $ X \in \mathcal{D}_g $
such that $ \varphi(X) = X $
(and $ \varphi(JX) = -JX $).
This implies that
$ \varphi( X \mathcal{D}_k^{+} ) = X \mathcal{D}_k^{+} $ and
$ \varphi( X \mathcal{D}_k^{-} ) = X \mathcal{D}_k^{-} $
for all $ k \in T_{[2]} $,
that is,
$\varphi$ is an involution and
there is exactly one proper refinement
compatible with $\varphi$.
Assume that $ \varphi(J) = -J $.
Then
$ \varphi \vert_{\mathcal{D}_g} =
\lambda \, \mathrm{id}_{\mathcal{D}_g} $,
where $ \lambda \in \mathcal{D}_e $, thus
$ \varphi \vert_{\mathcal{D}_{gk}} =
\pm \lambda \, \mathrm{id}_{\mathcal{D}_{gk}} $
for all $ k \in T_{[2]} $.
Therefore $\varphi$ is an involution if and only if
$ \lambda = \pm 1 $, and, in that case,
every refinement is compatible with $\varphi$.

Now that we know that every involution is compatible
with at least one proper refinement,
we can use this fact to prove
the rest of the assertions of the theorem
(see Remark \ref{rem:fT}).
Unlike in the previous cases,
in (2-e), if $\psi$ is any isomorphism or equivalence
between two refinements with supports
$ \langle h_1 \rangle \times T_1 $ and
$ \langle h_2 \rangle \times T_2 $,
then $\psi$ will continue to be an isomorphism or equivalence
with respect to the original gradings,
with supports $T_1$ and $T_2$.
Indeed, $\psi$ has to send $h_1$ to $h_2$,
because they are distinguished elements.

Finally, the computation of signature of $\varphi$ can be done similarly to Section \ref{sect:Dim1}
or, alternatively, we can take a compatible refinement and see the signature of the corresponding isomorphism class 
already on the list of Section \ref{sect:Dim1}.
\end{proof}

\section{Classification in the two-dimensional complex case}\label{sect:Dim2Complex}

\begin{example}\label{exam:Inv2f}
Let $ \varepsilon = e^{ 2 \pi i / l } \in \mathbb{C} $
and consider the generalized Pauli Matrices
$ X_a , X_b \in M_l(\mathbb{C}) $
of Figure \ref{fig:Matr}.
Note that
\[
X_a X_b = \varepsilon X_b X_a
\quad\text{and}\quad
X_a^l = X_b^l = I.
\]
Therefore, we can construct a division grading on $M_l(\mathbb{C})$
by the group $ \mathbb{Z}_l \times \mathbb{Z}_l $
if we define the homogeneous component of degree
$ ( \overline{j} , \overline{k} ) $ to be
$ \mathbb{C} X_a^j X_b^k $.
Let $\varphi_A$ and $\varphi_B$ be
the second kind antiautomorphisms on $M_l(\mathbb{C})$
given by $ \varphi_A (X) = A^{-1} X^* A $
and $ \varphi_B (X) = B^{-1} X^* B $,
where $ X^* := \overline{X^T} $
and the matrices $ A,B \in M_l(\mathbb{C}) $
are those of Figure \ref{fig:Matr}.
Since $ A^* = A $ and $ B^* = B $,
both $\varphi_A$ and $\varphi_B$ are involutions.
The signatures of $\varphi_A$ and $\varphi_B$ are,
respectively,
$2$ and $0$ if $l$ is even,
and $1$ and $1$ if $l$ is odd.
Both involutions respect the grading because:
\[
\varphi_A (X_a) = X_a ,
\quad
\varphi_A (X_b) = X_b ;
\qquad
\varphi_B (X_a) = \varepsilon X_a ,
\quad
\varphi_B (X_b) = X_b .
\]
\begin{figure}
\[
X_a =
\begin{pmatrix}
\varepsilon^{l-1} & 0 & 0 & \cdots & 0 \\
0 & \varepsilon^{l-2} & 0 & \cdots & 0 \\
0 & 0 & \varepsilon^{l-3} & \cdots & 0 \\
\vdots & \vdots & \vdots & \ddots & \vdots \\
0 & 0 & 0 & \cdots & 1
\end{pmatrix}
\qquad
X_b =
\begin{pmatrix}
0 & 1 & 0 & \cdots & 0 \\
0 & 0 & 1 & \cdots & 0 \\
\vdots & \vdots & \vdots & \ddots & \vdots \\
0 & 0 & 0 & \cdots & 1 \\
1 & 0 & 0 & \cdots & 0
\end{pmatrix}
\]

\[
A =
\begin{pmatrix}
0 & 0 & \cdots & 1 & 0 \\
\vdots & \vdots & \iddots & \vdots & \vdots \\
0 & 1 & \cdots & 0 & 0 \\
1 & 0 & \cdots & 0 & 0 \\
0 & 0 & \cdots & 0 & 1
\end{pmatrix}
\qquad
B =
\begin{pmatrix}
0 & 0 & 0 & \cdots & 1 \\
\vdots & \vdots & \vdots & \iddots & \vdots \\
0 & 0 & 1 & \cdots & 0 \\
0 & 1 & 0 & \cdots & 0 \\
1 & 0 & 0 & \cdots & 0
\end{pmatrix}
\]
\caption{Matrices in $M_l(\mathbb{C})$
of Example \ref{exam:Inv2f};
$ \varepsilon = e^{ 2 \pi i / l } $.}
\label{fig:Matr}
\end{figure}
We will write:
\begin{itemize}
\item
$ \varphi_l = \varphi_A $,
\item
$ \phi_l = \varphi_B $.
\end{itemize}
\end{example}

\begin{example}
We introduce two involutions on $M_2(\mathbb{C})$
that respect the division grading induced
by the Pauli matrices in the case $l=2$:
\begin{itemize}
\item
Let $\varphi_{\operatorname{(2-f-1-1)}}$ be
the matrix transpose on $M_2(\mathbb{C})$.
It is an involution of the first kind and orthogonal.
\item
Let $\varphi_{\operatorname{(2-f-1-2)}}$ be
the involution on $M_2(\mathbb{C})$
that acts as minus the identity on the matrices of trace zero,
and acts as the identity on the center of $M_2(\mathbb{C})$.
It is an involution of the first kind and symplectic.
\end{itemize}
\end{example}

Let $G$ be an abelian group,
$\mathcal{D}$ a real algebra isomorphic to $M_n(\mathbb{C})$,
and $\Gamma$ a division $G$-grading on $\mathcal{D}$ with homogeneous components of dimension $2$
such that the identity component coincides with the center of $\mathcal{D}$.
$\Gamma$ can be regarded as a grading of the complex algebra $M_n(\mathbb{C})$,
and these gradings are classified in \cite[Theorem 2.15]{EK-2013};
there is one family of equivalence classes: (2-f).
The isomorphism and equivalence classes in this classification remain the same over $\mathbb{R}$,
because the invariants that differentiate them, namely, the pair $(T,\beta)$ and the isomorphism class of $T$ 
respectively, are also preserved by isomorphisms of real algebras.
As always, $T$ is the support of $\Gamma$ and $\beta$ is the alternating bicharacter 
given by the commutation relations, $X_u X_v = \beta(u,v) X_v X_u$ (where $0\ne X_t\in\mathcal{D}_t$ for all $t\in T$), 
but, in contrast with Section \ref{sect:Dim1}, $\beta$ is now $\mathbb{C}$-valued.

Any antiautomorphism $\varphi$ of the $G$-graded algebra $\mathcal{D}$ is an involution,
and satisfies either $\varphi(iI)=+iI$ or $\varphi(iI)=-iI$ (where $I$ is the unity of $\mathcal{D}$).
We classify the pairs $(\Gamma,\varphi)$, up to isomorphism and up to equivalence,
and we give a representative of every equivalence class:

\medskip

(2-f)
The grading $\Gamma$ on $ \mathcal{D} \cong M_n(\mathbb{C}) $ ($ n \geq 1 $) was determined up to isomorphism by $(T,\beta)$,
where $T$ was a subgroup of $G$ isomorphic to $ \mathbb{Z}_{l_1}^2 \times \ldots \times \mathbb{Z}_{l_r}^2 $ ($ l_1 \cdots l_r = n $),
and $\beta$ was a $\mathbb{C}$-valued alternating bicharacter on $T$ such that $ \mathrm{rad}(\beta) = \{ e \} $.
The equivalence class of the grading $\Gamma$ was determined by the isomorphism class of the group $T$,
which we fix henceforth.
Now, in the case $\varphi(iI)=+iI$, we have $ l_1 = \ldots = l_r = 2 $
(so $\beta$ takes values in $ \{ \pm 1 \} \subseteq \mathbb{R}^{\times} $)
and $(\Gamma,\varphi)$ is determined up to isomorphism by $(T,\beta,\eta)$,
where $\eta$ is a quadratic form on $T$ such that $ \beta_{\eta} = \beta $,
and $\eta$ is defined by the equation
\begin{equation}
\varphi(X_t) = \eta(t) X_t
\end{equation}
for all $ X_t \in \mathcal{D}_t $.
These isomorphism classes belong to one of the following two equivalence classes:
\begin{enumerate}
	\item[(1-1)] $ \mathrm{Arf}(\eta) = +1 $ ($ n = 2^m \geq 1 $).\\
	The involution $\varphi$ is of the first kind and orthogonal.\\
	A representative is
	$ \varphi_{\operatorname{(2-f-1-1)}} \otimes_{\mathbb{C}} \dots \otimes_{\mathbb{C}} \varphi_{\operatorname{(2-f-1-1)}} $
	(if $n=1$,  we take $\varphi=\mathrm{id}_\mathbb{C}$).
	\item[(1-2)] $ \mathrm{Arf}(\eta) = -1 $ ($ n = 2^m \geq 2 $).\\
	The involution $\varphi$ is of the first kind and symplectic.\\
	A representative is
	$ \varphi_{\operatorname{(2-f-1-1)}} \otimes_{\mathbb{C}} \dots \otimes_{\mathbb{C}}
	\varphi_{\operatorname{(2-f-1-1)}} \otimes_{\mathbb{C}} \varphi_{\operatorname{(2-f-1-2)}} $.
\end{enumerate}
On the other hand, in the case $\varphi(iI)=-iI$, $(\Gamma,\varphi)$ is determined up to isomorphism by $(T,\beta,S)$,
where $S$ is a subgroup of $T_{[2]}$ of index $1$ or $2$,
and $S$ is defined as
\begin{equation}
S := \{ t \in T_{[2]} \mid
\exists X \in \mathcal{D}_t
\text{ such that } X^2 = +I
\text{ and } \varphi(X) = X \} .
\end{equation}
These isomorphism classes belong to one of the following equivalence classes:
\begin{enumerate}
	\item[(2-0)] $ S = T_{[2]} $ ($ n \geq 1 $).\\
	The involution $\varphi$ is of the second kind and has signature $\sqrt{ \vert T_{[2]} \vert } $.\\
	A representative is
	$ \varphi_{l_1} \otimes_{\mathbb{C}} \dots \otimes_{\mathbb{C}} \varphi_{l_r} $.
	\item[(2-$p$)] $ S \neq T_{[2]} $,
	and any $ t \in T $ of order $2^{p+1}$ satisfies $ t^{2^p} \in S $,
	but there exists $ t \in T $ of order $2^p$ such that $ t^{2^{p-1}} \in T_{[2]} \setminus S $
	($ p \geq 1 $).\\
	The involution $\varphi$ is of the second kind and has signature $0$.\\
	A representative is
	$ \phi_{l_1} \otimes_{\mathbb{C}} \dots \otimes_{\mathbb{C}} \phi_{l_s} \otimes_{\mathbb{C}}
	\varphi_{l_{s+1}} \otimes_{\mathbb{C}} \dots \otimes_{\mathbb{C}} \varphi_{l_r} $,
	provided that $ l_1 \mid l_2 \mid \dots \mid l_r $
	and $2^{p+1}$ divides $l_{s+1}$ but does not divide $l_s$.
\end{enumerate}

\begin{proof}
The case $\varphi(iI)=+iI$ was proved in
\cite[Propositions 2.51 and 2.53]{EK-2013}.
So assume that $\varphi(iI)=-iI$.
For any element $ t \in T $, denote its order by $ o(t) $ and define:
\begin{equation}\label{eq:powers}
\mathcal{D}_t^{[+]} :=
\{ X \in \mathcal{D}_t \mid X^{ o(t) } = +I \}
\quad\text{and}\quad
\mathcal{D}_t^{[-]} :=
\{ X \in \mathcal{D}_t \mid X^{ o(t) } = -I \}.
\end{equation}
Note that,
if $ X \in \mathcal{D}_t^{[+]} $ and
$ \varepsilon \in \mathbb{C} $ is
a primitive $ o(t) $-th root of unity,
then $ \mathcal{D}_t^{[+]} =
\{ X, \allowbreak \varepsilon X , \allowbreak \ldots ,
\allowbreak \varepsilon^{ o(t) - 1 } X \} $,
and similarly for $ \mathcal{D}_t^{[-]} $.
Besides, $ \varphi( \mathcal{D}_t^{[+]} ) = \mathcal{D}_t^{[+]} $
and $ \varphi( \mathcal{D}_t^{[-]} ) = \mathcal{D}_t^{[-]} $;
in particular, $\varphi$ is an involution.
We define the following subsets of $T$,
which are invariants of the isomorphism class of $(\Gamma,\varphi)$:
\begin{equation}
S' := \{ t \in T \mid
\exists X \in \mathcal{D}_t^{[+]}
\text{ such that } \varphi(X) = X \}
\quad\text{and}\quad
S := S' \cap T_{[2]}.
\end{equation}
If $ o(t) $ is odd, then $ t \in S' $,
while if $ o(t) $ is even,
then $ t \in T \setminus S' $ if and only if
there exists $ X \in \mathcal{D}_t^{[-]} $
such that $ \varphi(X) = X $.

Write $T$ as $ U \times V $, where $U$ is
the subgroup of $T$ formed by the elements whose order is a power of $2$,
and $V$ is the subgroup of $T$ formed by the elements of odd order.
We know that $ V \subseteq S' $.
Moreover, if $ u \in U $ and $ v \in V $,
then $ u \in S' $ if and only if $ uv \in S' $,
because $ \beta(u,v) = 1 $.
Finally, if $ u \in U \setminus T_{[2]} $,
then $ u \in S' $ if and only if $ u^2 \in S' $.
Therefore, $S$ determines $S'$.

The restriction of $\beta$ to $ T_{[2]} \times T_{[2]} $
takes values in $ \{ \pm 1 \} $.
Hence, $ u,v \in S $ implies $ uv \in S $,
and also $ u,v \in T_{[2]} \setminus S $ implies $ uv \in S $.
Therefore, $S$ is a subgroup of $T_{[2]}$ of index $1$ or $2$.

We know, for example from \cite[Equation (2.6)]{EK-2013},
that we can write $T$ as follows:
\begin{equation}\label{eq:SymBas}
T =
\langle a_1 \rangle \times \langle b_1 \rangle
\times \ldots \times
\langle a_r \rangle \times \langle b_r \rangle,
\end{equation}
where
$ a_i , b_i \in T $,
$ \langle a_i \rangle \times \langle b_i \rangle
\cong \mathbb{Z}_{l_i}^2 $,
$l_i$ is a power of a prime,
$ \beta(a_i,b_i) = \beta(b_i,a_i)^{-1} = e^{ 2 \pi i / l_i } $,
and the value of $\beta$ on all other pairs is $1$.
We claim that $(T,\beta,S)$ determines
$(\Gamma,\varphi)$ up to isomorphism.
We can pick $ X_{a_i} \in \mathcal{D}_{a_i} $
such that $ \varphi(X_{a_i}) = X_{a_i} $,
and either $ X_{a_i} \in \mathcal{D}_{a_i}^{[+]} $
if $ a_i \in S' $,
or $ X_{a_i} \in \mathcal{D}_{a_i}^{[-]} $
if $ a_i \in T \setminus S' $.
We pick $ X_{b_i} \in \mathcal{D}_{b_i} $ in the same way.
The elements $X_{a_i}$, $X_{b_i}$ generate $\mathcal{D}$,
with defining relations of two kinds: the commutation relations are determined by $\beta$
and the power relations are determined by $S$ through Equation \eqref{eq:powers}.
This proves the claim;
in fact, the isomorphism can be chosen
to be an isomorphism of complex algebras.
Conversely, let us find an involution $\varphi$
for a given subgroup $S$ of $T_{[2]}$ of index $1$ or $2$.
Thanks to Lemma \ref{lem:SignCompl},
it is enough to construct it for every factor 
$ \langle a_i \rangle \times \langle b_i \rangle $ of $T$,
but we have already done it in Example \ref{exam:Inv2f}.

Let us see that, for a fixed $ p \geq 1 $, all the involutions that lie
in (2-f-2-$p$) are equivalent.
In fact, we will show that
$ a_1 , \allowbreak b_1 , \allowbreak
\ldots , \allowbreak a_r , \allowbreak b_r $
in Equation \eqref{eq:SymBas} may be chosen
so that they also satisfy:
$ a_i , b_i \in S' $ for all $i$,
except in the case $ l_i = 2^{m_i} \leq 2^p $,
when $ a_i \in S' $ but $ b_i \in T \setminus S' $.
We can follow the same induction process as the one leading to
\cite[Equation (2.6)]{EK-2013},
until we arrive to a situation in which $T$ is a $2$-group
and there are elements in $T$ of maximal order, $2^p$,
that do not belong to $S'$.
Rearranging, we may assume that
$ l_1 = 2^{m_1} \geq l_2 = 2^{m_2}
\geq \ldots \geq l_r = 2^{m_r} $.
If $ r = 1 $, the statement is clear,
so suppose that $ r \geq 2 $.
Then we choose the next $a,b$ in the following way.

We want to take $ a,b \in T $ such that
$ o(a) = o(b) = 2^{m_1} $,
$ \beta(a,b) = e^{ 2 \pi i / l_1 } $,
$ a \in S' $, $ b \in T \setminus S' $,
and such that there are elements in
$ \langle a,b \rangle^{\perp} $
of maximal order, $2^{m_2}$, that do not belong to $S'$,
because then $ T = \langle a,b \rangle
\times \langle a,b \rangle^{\perp} $
and we will be able to continue the induction process with
$ \langle a,b \rangle^{\perp} $.
We know the existence of a decomposition $ T =
\langle \tilde{a}_1 \rangle \times
\langle \tilde{b}_1 \rangle
\times \ldots \times
\langle \tilde{a}_r \rangle \times
\langle \tilde{b}_r \rangle $
as in Equation \eqref{eq:SymBas},
but we cannot assure that $ \tilde{a}_i \in S' $
and $ \tilde{b}_i \in T \setminus S' $.
Without loss of generality,
$ \tilde{a}_1 , \tilde{a}_2 \in S' $
and $ \tilde{b}_1 \in T \setminus S' $,
hence $ \tilde{a}_1 \tilde{a}_2 \in S' $.
If $ \tilde{b}_2 \in T \setminus S' $,
simply take $ a = \tilde{a}_1 $
and $ b = \tilde{b}_1 $.
If $ \tilde{b}_2 \in S' $,
take $ a = \tilde{a}_1 \tilde{a}_2 $
and $ b = \tilde{b}_1 $, and note that
$ \tilde{b}_1^{ l_1 / l_2 } \tilde{b}_2^{-1} $
has order $2^{m_2}$ and belongs both to
$ \langle a,b \rangle^{\perp} $
and to $ T \setminus S' $.

Finally, the computation of signature is analogous to Section \ref{sect:Dim1}, but using 
Example \ref{exam:Inv2f} instead of Example \ref{exam:GradDim1} and
Lemma \ref{lem:SignCompl} instead of Lemma \ref{lem:SignReal}.
For involutions in (2-f-2-$p$), we pick up a zero factor. 
For involutions in (2-f-2-0), we may assume that $ l_1 , \ldots , l_s $ are even and $ l_{s+1} , \ldots , l_r $ are odd,
then $s$ is the number of factors $2$, so the signature equals $2^s=\sqrt{ \vert T_{[2]} \vert }$.
\end{proof}

\begin{remark}\label{rem:all_inv_2}
Consider an involution $\varphi$ of the second kind
on the graded algebra $\mathcal{D}$.
By Lemma \ref{lem:InnAut}, all such involutions 
can be obtained as $ \mathrm{Int}(X_u) \circ \varphi $ where $u$ runs through $T$.
Since $\beta$ is nondegenerate, it is easy to see that 
$u\in T^{[2]}$ (recall Notation \ref{nota:subgroups})
if and only if $ \beta(u,v) = 1 $ for all $ v \in T_{[2]} $.
Therefore, $ \mathrm{Int}(X_u) \circ \varphi $ and $\varphi$
are in the same isomorphism class
if and only if $u\in T^{[2]}$, because 
$ ( \mathrm{Int}(X_u) \circ \varphi )(X_v)
= \beta(u,v) \varphi(X_v) $.
Now assume that $\varphi$ lies in (2-f-2-0).
We have just shown that $ \mathrm{Int}(X_u) \circ \varphi $
lies in (2-f-2-0) if and only if $u$ is a square in $T$.
Now we claim that $ \mathrm{Int}(X_u) \circ \varphi $
lies in (2-f-2-$p$) ($ p \geq 1 $) if and only if
$ u T_{[2^p]} $ is a square in $ T / T_{[2^p]} $ but
$ u T_{[2^{p-1}]} $ is not a square in $ T / T_{[2^{p-1}]} $.
Indeed, using the nondegeneracy of $\beta$ (or explicitly using its values on the pairs of generators in 
Equation \eqref{eq:SymBas}), it is straightforward to show that
$ u T_{[2^p]} $ is a square in $ T / T_{[2^p]} $
if and only if $\beta(u,v^{2^p})=1$ for all $v\in T_{[2^{p+1}]}$.
%
%
\end{remark}

\section{Classification in the four-dimensional case}\label{sect:Dim4}

Let $G$ be an abelian group,
$\mathcal{D}$ a finite-dimensional simple real (associative) algebra,
and $\Gamma$ a division $G$-grading on $\mathcal{D}$ with homogeneous components of dimension $4$.
We can apply the Double Centralizer Theorem
(see for example \cite[Theorem 4.7]{Jacobson-1989})
to the identity component $\mathcal{D}_e$,
which is isomorphic to $\mathbb{H}$,
to conclude that $\mathcal{D}$ is isomorphic,
as a graded algebra, to $ \mathcal{D}_e
\otimes_{\mathbb{R}} C_{\mathcal{D}} (\mathcal{D}_e) $
(see \cite[Theorem 19]{Rodrigo-2016} for more details).
Note that $ C_{\mathcal{D}} (\mathcal{D}_e) $ is again a
finite-dimensional simple real 
graded-division algebra,
but with homogeneous components of dimension $1$.
Any antiautomorphism $\varphi$
of the $G$-graded algebra $\mathcal{D}$ is
the tensor product of its restrictions to $\mathcal{D}_e$
and to $ C_{\mathcal{D}} (\mathcal{D}_e) $.

The following result is well known and easily follows from Skolem--Noether Theorem.

\begin{proposition}\label{prop:InvH}
Any antiautomorphism $\varphi$ of the real algebra $\mathbb{H}$ can be written as $ \varphi(X) = A^{-1} \overline{X} A $,
for some $ A = a+bi+cj+dk \in \mathbb{H}^{\times} $.
So $\varphi$ is an involution if and only if either $b=c=d=0$ or $a=0$.
In the first case $\varphi$ is symplectic with signature $1$ ($\varphi$ is the conjugation),
while in the second $\varphi$ is orthogonal.
\qed
\end{proposition}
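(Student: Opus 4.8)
The plan is to deduce the description of $\varphi$ from the Skolem--Noether Theorem and then to extract the remaining assertions by a direct computation. First I would observe that the standard conjugation $c\colon X\mapsto\overline{X}$ is itself an antiautomorphism of $\mathbb{H}$, so that $\varphi\circ c$ is an \emph{automorphism} of $\mathbb{H}$. Since $\mathbb{H}$ is a central simple $\mathbb{R}$-algebra, the Skolem--Noether Theorem guarantees that $\varphi\circ c$ is inner, say $(\varphi\circ c)(X)=A^{-1}XA$ for some $A\in\mathbb{H}^{\times}$. Replacing $X$ by $\overline{X}$ and using $c^2=\mathrm{id}$ then yields $\varphi(X)=A^{-1}\overline{X}A$, which is the first claim.

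Next I would compute $\varphi^2$. Using that conjugation reverses products and that $\overline{A^{-1}}=(\overline{A})^{-1}$, one obtains
\[
\varphi^2(X)=BXB^{-1},\qquad B:=A^{-1}\overline{A}.
\]
Hence $\varphi$ is an involution if and only if $B$ lies in the center $Z(\mathbb{H})=\mathbb{R}$, that is, $\overline{A}=\lambda A$ for some $\lambda\in\mathbb{R}^{\times}$. Conjugating this identity once more forces $\lambda^2=1$. The case $\lambda=1$ gives $\overline{A}=A$, so $A$ is real and $\varphi=c$ is the conjugation (equivalently $b=c=d=0$); the case $\lambda=-1$ gives $\overline{A}=-A$, so $A$ is a pure quaternion (equivalently $a=0$). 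Since scaling $A$ by a nonzero real does not change $\varphi$, in the second case we may take $A$ to be a unit pure quaternion, i.e.\ $A^2=-1$.

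Finally I would determine the type and signature using the characterization of orthogonal versus symplectic involutions recalled in Section \ref{sect:BackInv}, which compares the dimensions of the $\pm1$-eigenspaces of $\varphi$. For $\lambda=1$ the fixed space is $\mathbb{R}1$ and the $(-1)$-eigenspace is the space of pure quaternions, so $\varphi$ is symplectic; its signature is that of the standard conjugation $\varphi_{\operatorname{(1-b-1)}}$ of Example \ref{exam:InvOneD}, namely $1$ (equivalently, the adjoint hermitian form on the one-dimensional right $\mathbb{H}$-space is $h(x,y)=\overline{x}y$, whose diagonalization has a single positive entry). For $\lambda=-1$ one checks that $\varphi(1)=1$, while on the pure quaternions $\varphi$ acts as the negative of conjugation by $A$, which is a rotation by $\pi$ about the axis $\mathbb{R}A$; hence $\varphi$ fixes $\mathbb{R}1$ together with the plane $A^{\perp}$ in the pure quaternions (dimension $3$) and negates $\mathbb{R}A$, so it is orthogonal. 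The one point requiring a little care is this last eigenspace count, which must hold for every pure $A$ and not merely for $A=i$; but this is immediate from the rotation description, all such $\varphi$ being conjugate to $\varphi_{\operatorname{(2-b-5)}}$, so no genuine difficulty arises.
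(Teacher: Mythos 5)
Your proposal is correct and follows exactly the route the paper indicates: the paper states this proposition as well known, marking it with a \qed and only pointing to the Skolem--Noether Theorem, and your argument is precisely the filling-in of that reference (compose with conjugation to get an inner automorphism, compute $\varphi^2$ to force $\overline{A}=\pm A$, then decide orthogonal versus symplectic by comparing the dimensions of the $\pm1$-eigenspaces as recalled in Section~\ref{sect:BackInv}). All the computational steps check out, including the eigenspace count $3>1$ for pure $A$, so nothing further is needed.
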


For us, this means the following: if $\Gamma$ is the trivial grading on $\mathbb{H}$, 
then there are exactly two isomorphism classes of pairs $(\Gamma,\varphi)$, where $\varphi$ 
is an involution, and they coincide with the equivalence classes.
When $\mathbb{H}$ is endowed with the trivial grading, we denote:
\begin{itemize}
\item
Let $\varphi_{\operatorname{(3-b-1)}}$ be
the conjugation on $\mathbb{H}$, that is,
$ \varphi_{\operatorname{(3-b-1)}} = \varphi_{\operatorname{(1-b-1)}} $.
\item
Let $\varphi_{\operatorname{(3-b-4)}}$ be
a representative of the orthogonal equivalence class, for example,
$ \varphi_{\operatorname{(3-b-4)}} = \varphi_{\operatorname{(1-b-3)}} $.
\end{itemize}

Now, the classification of pairs $(\Gamma,\varphi)$, where $\Gamma$ is a division grading on 
$\mathcal{D}$ as above and $\varphi$ is an involution,
is easily obtained from
Proposition \ref{prop:InvH} and Section \ref{sect:Dim1}.
Therefore, the isomorphism classes are in two-to-one correspondence with
the quadratic forms $\eta$ on $T$ such that $ \beta_{\eta} = \beta $,
where $T$ is the support of $\Gamma$ and
$\beta$ is the alternating bicharacter on $T$
given by the commutation relations
in the centralizer of the identity component.
Each quadratic form $\eta$ corresponds to two isomorphism classes,
one for every class of involutions on
$ \mathcal{D}_e \cong \mathbb{H} $,
by means of the equation:
\begin{equation}
\varphi(X_t) = \eta(t) X_t
\end{equation}
for all $ X_t \in \mathcal{D}_t \cap C_{\mathcal{D}} (\mathcal{D}_e) $.
Note that, if we have to compute the signature of $\varphi$,
we take a compatible refinement
and check the signature of
the corresponding isomorphism class
in the list of Section \ref{sect:Dim1}.
We compile the classification,
up to isomorphism and up to equivalence,
and we give a representative of every equivalence class,
to serve as a reference:

\medskip

(3-a)
The grading $\Gamma$ on $ \mathcal{D} \cong M_n(\mathbb{R}) $ ($ n = 2^m \geq 4 $) was determined up to isomorphism by $(T,\mu)$,
where $T$ was a subgroup of $G$ isomorphic to $\mathbb{Z}_2^{2m-2}$,
and $\mu$ was a quadratic form on $T$ such that $\beta_{\mu}$ had type I and $ \mathrm{Arf}(\mu) = -1 $.
Now, if $\varphi$ is the conjugation on $\mathcal{D}_e$, 
$(\Gamma,\varphi)$ is determined up to isomorphism by $(T,\mu,\eta)$,
where $\eta$ is a quadratic form on $T$ such that $ \beta_{\eta} = \beta_{\mu} $.
These isomorphism classes belong to one of the following three equivalence classes:
\begin{enumerate}
	\item[(1)] $ \eta = \mu $ ($ n = 2^m \geq 4 $).\\
	The involution $\varphi$ is orthogonal with signature $n$.\\
	A representative is
	$ \varphi_{\operatorname{(3-b-1)}} \otimes
	\varphi_* \otimes \dots \otimes \varphi_* \otimes
	\varphi_{\operatorname{(1-b-1)}} $.
	\item[(2)] $ \mathrm{Arf}(\eta) = -1 $ but $ \eta \neq \mu $ ($ n = 2^m \geq 8 $).\\
	The involution $\varphi$ is orthogonal with signature $0$.\\
	A representative is
	$ \varphi_{\operatorname{(3-b-1)}} \otimes
	\varphi_* \otimes \dots \otimes \varphi_* \otimes
	\varphi_{\operatorname{(1-a-2)}} \otimes \varphi_{\operatorname{(1-b-1)}} $.
	\item[(3)] $ \mathrm{Arf}(\eta) = +1 $ ($ n = 2^m \geq 4 $).\\
	The involution $\varphi$ is symplectic.\\
	A representative is
	$ \varphi_{\operatorname{(3-b-1)}} \otimes
	\varphi_* \otimes \dots \otimes \varphi_* \otimes
	\varphi_{\operatorname{(1-b-3)}} $.
\end{enumerate}
On the other hand, if $\varphi$ is orthogonal on $\mathcal{D}_e$, 
$(\Gamma,\varphi)$ is determined up to isomorphism by $(T,\mu,\eta)$,
where again $\eta$ is a quadratic form on $T$ such that $ \beta_{\eta} = \beta_{\mu} $.
These isomorphism classes belong to one of the following three equivalence classes:
\begin{enumerate}
	\item[(4)] $ \eta = \mu $ ($ n = 2^m \geq 4 $).\\
	The involution $\varphi$ is symplectic.\\
	A representative is
	$ \varphi_{\operatorname{(3-b-4)}} \otimes
	\varphi_* \otimes \dots \otimes \varphi_* \otimes
	\varphi_{\operatorname{(1-b-1)}} $.
	\item[(5)] $ \mathrm{Arf}(\eta) = -1 $ but $ \eta \neq \mu $ ($ n = 2^m \geq 8 $).\\
	The involution $\varphi$ is symplectic.\\
	A representative is
	$ \varphi_{\operatorname{(3-b-4)}} \otimes
	\varphi_* \otimes \dots \otimes \varphi_* \otimes
	\varphi_{\operatorname{(1-a-2)}} \otimes \varphi_{\operatorname{(1-b-1)}} $.
	\item[(6)] $ \mathrm{Arf}(\eta) = +1 $ ($ n = 2^m \geq 4 $).\\
	The involution $\varphi$ is orthogonal with signature $0$.\\
	A representative is
	$ \varphi_{\operatorname{(3-b-4)}} \otimes
	\varphi_* \otimes \dots \otimes \varphi_* \otimes
	\varphi_{\operatorname{(1-b-3)}} $.
\end{enumerate}

\medskip

(3-b)
The grading $\Gamma$ on $ \mathcal{D} \cong M_{n/2}(\mathbb{H}) $ ($ n = 2^m \geq 2 $) was determined up to isomorphism by $(T,\mu)$,
where $T$ was a subgroup of $G$ isomorphic to $\mathbb{Z}_2^{2m-2}$,
and $\mu$ was a quadratic form on $T$ such that $\beta_{\mu}$ had type I and $ \mathrm{Arf}(\mu) = +1 $.
Now, if $\varphi$ is the conjugation on $\mathcal{D}_e$, 
$(\Gamma,\varphi)$ is determined up to isomorphism by $(T,\mu,\eta)$,
where $\eta$ is a quadratic form on $T$ such that $ \beta_{\eta} = \beta_{\mu} $.
These isomorphism classes belong to one of the following three equivalence classes:
\begin{enumerate}
	\item[(1)] $ \eta = \mu $ ($ n = 2^m \geq 2 $).\\
	The involution $\varphi$ is symplectic with signature $n/2$.\\
	A representative is
	$ \varphi_{\operatorname{(3-b-1)}} \otimes
	\varphi_* \otimes \dots \otimes \varphi_* $.
	\item[(2)] $ \mathrm{Arf}(\eta) = +1 $ but $ \eta \neq \mu $ ($ n = 2^m \geq 4 $).\\
	The involution $\varphi$ is symplectic with signature $0$.\\
	A representative is
	$ \varphi_{\operatorname{(3-b-1)}} \otimes
	\varphi_* \otimes \dots \otimes \varphi_* \otimes
	\varphi_{\operatorname{(1-a-2)}} $.
	\item[(3)] $ \mathrm{Arf}(\eta) = -1 $ ($ n = 2^m \geq 4 $).\\
	The involution $\varphi$ is orthogonal.\\
	A representative is
	$ \varphi_{\operatorname{(3-b-1)}} \otimes
	\varphi_* \otimes \dots \otimes \varphi_* \otimes
	\varphi_{\operatorname{(1-a-3)}} $.
\end{enumerate}
On the other hand, if $\varphi$ is orthogonal on $\mathcal{D}_e$, 
$(\Gamma,\varphi)$ is determined up to isomorphism by $(T,\mu,\eta)$,
where again $\eta$ is a quadratic form on $T$ such that $ \beta_{\eta} = \beta_{\mu} $.
These isomorphism classes belong to one of the following three equivalence classes:
\begin{enumerate}
	\item[(4)] $ \eta = \mu $ ($ n = 2^m \geq 2 $).\\
	The involution $\varphi$ is orthogonal.\\
	A representative is
	$ \varphi_{\operatorname{(3-b-4)}} \otimes
	\varphi_* \otimes \dots \otimes \varphi_* $.
	\item[(5)] $ \mathrm{Arf}(\eta) = +1 $ but $ \eta \neq \mu $ ($ n = 2^m \geq 4 $).\\
	The involution $\varphi$ is orthogonal.\\
	A representative is
	$ \varphi_{\operatorname{(3-b-4)}} \otimes
	\varphi_* \otimes \dots \otimes \varphi_* \otimes
	\varphi_{\operatorname{(1-a-2)}} $.
	\item[(6)] $ \mathrm{Arf}(\eta) = -1 $ ($ n = 2^m \geq 4 $).\\
	The involution $\varphi$ is symplectic with signature $0$.\\
	A representative is
	$ \varphi_{\operatorname{(3-b-4)}} \otimes
	\varphi_* \otimes \dots \otimes \varphi_* \otimes
	\varphi_{\operatorname{(1-a-3)}} $.
\end{enumerate}

\medskip

(3-c)
The grading $\Gamma$ on $ \mathcal{D} \cong M_n(\mathbb{C}) $ ($ n = 2^m \geq 2 $) was determined up to isomorphism by $(T,\mu)$,
where $T$ was a subgroup of $G$ isomorphic to $\mathbb{Z}_2^{2m-1}$,
and $\mu$ was a quadratic form on $T$ such that $\beta := \beta_{\mu}$ had type II and $ \mu(f_{\beta}) = -1 $.
Now, if $\varphi$ is the conjugation on $\mathcal{D}_e$, 
$(\Gamma,\varphi)$ is determined up to isomorphism by $(T,\mu,\eta)$,
where $\eta$ is a quadratic form on $T$ such that $ \beta_{\eta} = \beta $.
These isomorphism classes belong to one of the following four equivalence classes:
\begin{enumerate}
	\item[(1)] $ \eta = \mu $ ($ n = 2^m \geq 2 $).\\
	The involution $\varphi$ is of the second kind and has signature $n$.\\
	A representative is
	$ \varphi_{\operatorname{(3-b-1)}} \otimes
	\varphi_* \otimes \dots \otimes \varphi_* \otimes
	\varphi_{\operatorname{(1-c-1)}} $.
	\item[(2)] $ \eta(f_{\beta}) = -1 $ but $ \eta \neq \mu $ ($ n = 2^m \geq 4 $).\\
	The involution $\varphi$ is of the second kind and has signature $0$.\\
	A representative is
	$ \varphi_{\operatorname{(3-b-1)}} \otimes
	\varphi_* \otimes \dots \otimes \varphi_* \otimes
	\varphi_{\operatorname{(1-a-2)}} \otimes \varphi_{\operatorname{(1-c-1)}} $.
	\item[(3)] $ \eta(f_{\beta}) = +1 $ and $ \mathrm{Arf}(\eta) = +1 $ ($ n = 2^m \geq 2 $).\\
	The involution $\varphi$ is of the first kind and symplectic.\\
	A representative is
	$ \varphi_{\operatorname{(3-b-1)}} \otimes
	\varphi_* \otimes \dots \otimes \varphi_* \otimes
	\varphi_{\operatorname{(1-c-3)}} $.
	\item[(4)] $ \eta(f_{\beta}) = +1 $ and $ \mathrm{Arf}(\eta) = -1 $ ($ n = 2^m \geq 4 $).\\
	The involution $\varphi$ is of the first kind and orthogonal.\\
	A representative is
	$ \varphi_{\operatorname{(3-b-1)}} \otimes
	\varphi_* \otimes \dots \otimes \varphi_* \otimes
	\varphi_{\operatorname{(1-a-3)}} \otimes \varphi_{\operatorname{(1-c-3)}} $.
\end{enumerate}
On the other hand, if $\varphi$ is orthogonal in $\mathcal{D}_e$, 
$(\Gamma,\varphi)$ is determined up to isomorphism by $(T,\mu,\eta)$,
where again $\eta$ is a quadratic form on $T$ such that $ \beta_{\eta} = \beta $.
These isomorphism classes belong to one of the following four equivalence classes:
\begin{enumerate}
	\item[(5)] $ \eta = \mu $ ($ n = 2^m \geq 2 $).\\
	The involution $\varphi$ is of the second kind and has signature $0$.\\
	A representative is
	$ \varphi_{\operatorname{(3-b-4)}} \otimes
	\varphi_* \otimes \dots \otimes \varphi_* \otimes
	\varphi_{\operatorname{(1-c-1)}} $.
	\item[(6)] $ \eta(f_{\beta}) = -1 $ but $ \eta \neq \mu $ ($ n = 2^m \geq 4 $).\\
	The involution $\varphi$ is of the second kind and has signature $0$.\\
	A representative is
	$ \varphi_{\operatorname{(3-b-4)}} \otimes
	\varphi_* \otimes \dots \otimes \varphi_* \otimes
	\varphi_{\operatorname{(1-a-2)}} \otimes \varphi_{\operatorname{(1-c-1)}} $.
	\item[(7)] $ \eta(f_{\beta}) = +1 $ and $ \mathrm{Arf}(\eta) = +1 $ ($ n = 2^m \geq 2 $).\\
	The involution $\varphi$ is of the first kind and orthogonal.\\
	A representative is
	$ \varphi_{\operatorname{(3-b-4)}} \otimes
	\varphi_* \otimes \dots \otimes \varphi_* \otimes
	\varphi_{\operatorname{(1-c-3)}} $.
	\item[(8)] $ \eta(f_{\beta}) = +1 $ and $ \mathrm{Arf}(\eta) = -1 $ ($ n = 2^m \geq 4 $).\\
	The involution $\varphi$ is of the first kind and symplectic.\\
	A representative is
	$ \varphi_{\operatorname{(3-b-4)}} \otimes
	\varphi_* \otimes \dots \otimes \varphi_* \otimes
	\varphi_{\operatorname{(1-a-3)}} \otimes \varphi_{\operatorname{(1-c-3)}} $.
\end{enumerate}

\medskip

(3-d)
The grading $\Gamma$ on $ \mathcal{D} \cong M_n(\mathbb{C}) $ ($ n = 2^m \geq 4 $) was determined up to isomorphism by $(T,\beta,\mu)$,
where $T$ was a subgroup of $G$ isomorphic to $ \mathbb{Z}_2^{2m-3} \times \mathbb{Z}_4 $,
$\beta$ was an alternating bicharacter on $T$ of type II,
and $\mu$ was a quadratic form on $T_{[2]}$ such that $ \beta_{\mu} = \beta \vert_{ T_{[2]} \times T_{[2]} } $ and $ \mu(f_T) = -1 $.
Now, if $\varphi$ is the conjugation on $\mathcal{D}_e$, $(\Gamma,\varphi)$ is determined up to isomorphism by $(T,\beta,\mu,\eta)$,
where $\eta$ is a quadratic form on $T$ such that $ \beta_{\eta} = \beta $ (so $ \eta(f_T) = +1 $).
These isomorphism classes belong to one of the following four equivalence classes:
\begin{enumerate}
	\item[(1)] $ \eta( \mathrm{rad}'(\beta) ) = \{ +1 \} $ and $ \mathrm{Arf}(\eta) = +1 $ ($ n = 2^m \geq 4 $).\\
	The involution $\varphi$ is of the first kind and symplectic.\\
	A representative is
	$ \varphi_{\operatorname{(3-b-1)}} \otimes
	\varphi_* \otimes \dots \otimes \varphi_* \otimes
	\varphi_{\operatorname{(1-d-1)}} $.
	\item[(2)] $ \eta( \mathrm{rad}'(\beta) ) = \{ +1 \} $ and $ \mathrm{Arf}(\eta) = -1 $ ($ n = 2^m \geq 8 $).\\
	The involution $\varphi$ is of the first kind and orthogonal.\\
	A representative is
	$ \varphi_{\operatorname{(3-b-1)}} \otimes
	\varphi_* \otimes \dots \otimes \varphi_* \otimes
	\varphi_{\operatorname{(1-a-3)}} \otimes \varphi_{\operatorname{(1-d-1)}} $.
	\item[(3)] $ \eta( \mathrm{rad}'(\beta) ) = \{ -1 \} $ and $ \mathrm{Arf}(\eta) = +1 $ ($ n = 2^m \geq 4 $).\\
	The involution $\varphi$ is of the first kind and symplectic.\\
	A representative is
	$ \varphi_{\operatorname{(3-b-1)}} \otimes
	\varphi_* \otimes \dots \otimes \varphi_* \otimes
	\varphi_{\operatorname{(1-d-3)}} $.
	\item[(4)] $ \eta( \mathrm{rad}'(\beta) ) = \{ -1 \} $ and $ \mathrm{Arf}(\eta) = -1 $ ($ n = 2^m \geq 4 $).\\
	The involution $\varphi$ is of the first kind and orthogonal.\\
	A representative is
	$ \varphi_{\operatorname{(3-b-1)}} \otimes
	\varphi_* \otimes \dots \otimes \varphi_* \otimes
	\varphi_{\operatorname{(1-d-4)}} $.
\end{enumerate}
On the other hand, if $\varphi$ is orthogonal on $\mathcal{D}_e$, 
$(\Gamma,\varphi)$ is determined up to isomorphism by $(T,\beta,\mu,\eta)$,
where again $\eta$ is a quadratic form on $T$ such that $ \beta_{\eta} = \beta $ (so $ \eta(f_T) = +1 $).
These isomorphism classes belong to one of the following four equivalence classes:
\begin{enumerate}
	\item[(5)] $ \eta( \mathrm{rad}'(\beta) ) = \{ +1 \} $ and $ \mathrm{Arf}(\eta) = +1 $ ($ n = 2^m \geq 4 $).\\
	The involution $\varphi$ is of the first kind and orthogonal.\\
	A representative is
	$ \varphi_{\operatorname{(3-b-4)}} \otimes
	\varphi_* \otimes \dots \otimes \varphi_* \otimes
	\varphi_{\operatorname{(1-d-1)}} $.
	\item[(6)] $ \eta( \mathrm{rad}'(\beta) ) = \{ +1 \} $ and $ \mathrm{Arf}(\eta) = -1 $ ($ n = 2^m \geq 8 $).\\
	The involution $\varphi$ is of the first kind and symplectic.\\
	A representative is
	$ \varphi_{\operatorname{(3-b-4)}} \otimes
	\varphi_* \otimes \dots \otimes \varphi_* \otimes
	\varphi_{\operatorname{(1-a-3)}} \otimes \varphi_{\operatorname{(1-d-1)}} $.
	\item[(7)] $ \eta( \mathrm{rad}'(\beta) ) = \{ -1 \} $ and $ \mathrm{Arf}(\eta) = +1 $ ($ n = 2^m \geq 4 $).\\
	The involution $\varphi$ is of the first kind and orthogonal.\\
	A representative is
	$ \varphi_{\operatorname{(3-b-4)}} \otimes
	\varphi_* \otimes \dots \otimes \varphi_* \otimes
	\varphi_{\operatorname{(1-d-3)}} $.
	\item[(8)] $ \eta( \mathrm{rad}'(\beta) ) = \{ -1 \} $ and $ \mathrm{Arf}(\eta) = -1 $ ($ n = 2^m \geq 4 $).\\
	The involution $\varphi$ is of the first kind and symplectic.\\
	A representative is
	$ \varphi_{\operatorname{(3-b-4)}} \otimes
	\varphi_* \otimes \dots \otimes \varphi_* \otimes
	\varphi_{\operatorname{(1-d-4)}} $.
\end{enumerate}

\section{Semisimple algebras with involution}\label{sect:Semisimple}

The results of the previous sections can be extended 
to finite-dimensional real algebras that are not necessarily simple,
but do not have nontrivial ideals preserved by the involution.
As mentioned in the Introduction, our purpose is to apply these results for the classification of gradings
on classical real Lie algebras in a forthcoming article \cite{BKR-2017},
so here we restrict ourselves to the situation relevant for that application.

Let $G$ be an abelian group,
$\mathcal{D}$ a finite-dimensional
\emph{non}-simple real (associative) algebra
whose center has dimension $2$,
and $\Gamma$ a division $G$-grading on $\mathcal{D}$.
Recall from Section \ref{sect:GradQF} that this implies that
$\mathcal{D}$ is the direct product of two central simple algebras over $\mathbb{R}$.
Let $\varphi$ be a second kind involution
on the $G$-graded algebra $\mathcal{D}$.
We want to classify the pairs $(\Gamma,\varphi)$,
up to isomorphism
(but not up to equivalence).
In fact, we can repeat the arguments
in \cite{Rodrigo-2016} and in the previous sections,
because they do not depend on
the simplicity of the underlying algebra.

\medskip

Let us start by considering the grading $\Gamma$ and
disregarding the involution $\varphi$.
As always, we denote by $T$ the support of $\Gamma$,
by $K$ the support of the centralizer of the identity component,
and by $ \beta : K \times K \rightarrow \{ \pm 1 \} $
the alternating bicharacter given by
the commutation relations in
the centralizer of the identity component.
Also, if the homogeneous components have dimension $2$,
we write $ \mathcal{D}_e = \mathbb{R}I \oplus \mathbb{R}J $
($ \cong \mathbb{C} $),
where $I$ is the unity of $\mathcal{D}$ and $J^2=-I$.
By \cite[Proposition 20]{Rodrigo-2016},
if the homogeneous components have dimension $2$ or $4$,
then there exists a proper refinement of the grading.

The existence of a second kind involution $\varphi$
prevents $T$ from having a factor $\mathbb{Z}_4$,
in other words, $T$ is an elementary abelian $2$-group.
Indeed, Remark \ref{rem:fT} can be invoked
if the homogeneous components have dimension $1$.
As in Section \ref{sect:Dim4}, the case of dimension $4$ reduces to dimension $1$ 
using the Double Centralizer Theorem
(note that \cite[Theorem 4.7]{Jacobson-1989}
does not require the ambient algebra to be simple).
Finally, in the case where the homogeneous components have dimension $2$,
if there existed an element $ g \in T \setminus K $ of order $4$, then,
by \cite[Remark 21]{Rodrigo-2016},
any $ 0 \neq X,X' \in \mathcal{D}_g $ would satisfy
$ X^2 \in Z(\mathcal{D}) $ and
$ (X')^2 \in \mathbb{R}_{ > 0 } X^2 $,
so $ \varphi(X^2) = \varphi(X)^2 \in \mathbb{R}_{ > 0 } X^2 $
would give us a contradiction with
$\varphi$ being of the second kind.

\medskip

Looking at the list in Section \ref{sect:GradQF}, we see 
that $\mathcal{D}$ must be isomorphic to
either $ M_n(\mathbb{R}) \times M_n(\mathbb{R}) $
($ n = 2^m \geq 1 $)
or $ M_{n/2}(\mathbb{H}) \times M_{n/2}(\mathbb{H}) $
($ n = 2^m \geq 2 $),
both with a grading whose support is an elementary $2$-group of rank $2m+1$, $2m$ or $2m-1$,
according to the homogeneous components being of dimension $1$, $2$ or $4$ respectively.

\begin{itemize}
\item
If the homogeneous components have dimension $1$,
then $(\Gamma,\varphi)$ is determined up to isomorphism
by $(T,\mu,\eta)$, where
$T$ is a subgroup of $G$ isomorphic to $\mathbb{Z}_2^{2m+1}$,
$\mu$ is a quadratic form on $T$ such that
$\beta := \beta_{\mu}$ has type II
and $ \mu(f_{\beta}) = +1 $,
and $\eta$ is a quadratic form on $T$ such that
$ \beta_{\eta} = \beta $ and $ \eta(f_{\beta}) = -1 $.
\\
If $ \mathrm{Arf}(\mu) = +1 $, then $ \mathcal{D}
\cong M_n(\mathbb{R}) \times M_n(\mathbb{R}) $ ($ n = 2^m \geq 1 $),
whereas if $ \mathrm{Arf}(\mu) = -1 $, then $ \mathcal{D}
\cong M_{n/2}(\mathbb{H}) \times M_{n/2}(\mathbb{H}) $ ($ n = 2^m \geq 2 $).
\item
If the homogeneous components have dimension $4$,
then the classification is again reduced
to the case of dimension $1$ (see Section \ref{sect:Dim4}).
\item
If the homogeneous components have dimension $2$,
then the grading $\Gamma$ is determined up to isomorphism
by $(T,K,\nu)$,
where $T$ is a subgroup of $G$
isomorphic to $\mathbb{Z}_2^{2m}$,
$K$ is a subgroup of $T$ of index $2$,
and $\nu$ is a nice map on $ T \setminus K $ such that
$\beta := \beta_{\nu}$ has type II
and $ \nu(f_{\beta}) = +1 $.
\\
If $ \mathrm{Arf}(\nu) = +1 $, then $ \mathcal{D}
\cong M_n(\mathbb{R}) \times M_n(\mathbb{R}) $ ($ n = 2^m \geq 2 $),
whereas if $ \mathrm{Arf}(\nu) = -1 $, then $ \mathcal{D}
\cong M_{n/2}(\mathbb{H}) \times M_{n/2}(\mathbb{H}) $ ($ n = 2^m \geq 2 $).
\\
Now, in the case $\varphi(J)=+J$,
$(\Gamma,\varphi)$ is determined up to isomorphism
by $(T,K,\nu,\eta)$,
where $\eta$ is a quadratic form on $K$ such that
$ \beta_{\eta} = \beta $ and $ \eta(f_{\beta}) = -1 $.
On the other hand, in the case $\varphi(J)=-J$,
$(\Gamma,\varphi)$ is determined up to isomorphism
by $ ( T , \allowbreak K , \allowbreak \nu ,
\allowbreak \omega ) $,
where $\omega$ is a nice map on $ T \setminus K $
such that $ \beta_{\omega} = \beta $
and $ \omega(f_{\beta}) = -1 $.
\end{itemize}

\section{Distinguished involutions}\label{sect:Distinguished}

Let $\mathcal{D}$ be as in Sections \ref{sect:Dim1}, \ref{sect:Dim2NonComplex}, \ref{sect:Dim2Complex} or 
\ref{sect:Dim4}, that is, a finite-dimensional simple real algebra with a division grading $\Gamma$ 
by an abelian group $G$ such that $\mathcal{D}$ admits an involution as a graded algebra. 
Let $T$ be the support of $\Gamma$.

We already observed (see Remarks \ref{rem:all_inv_1} and \ref{rem:all_inv_2}) that, 
given one such involution $\varphi$, we can obtain all involutions 
(of the same kind in the case $Z(\mathcal{D})=\mathbb{C}$) as 
$\mathrm{Int}(X)\circ\varphi$, where $X$ runs through nonzero homogeneous elements of $\mathcal{D}$.
Over an algebraically closed field such as $\mathbb{C}$, which appears in this paper in 
Section \ref{sect:Dim2Complex} when $\varphi$ is of the first kind, there is no special choice of $\varphi$.
Over the field $\mathbb{R}$, however, we conclude from our results that there is often a special choice, 
which we refer to as a \emph{distinguished involution}.
Here we collect some of the properties of these involutions.

First assume that $T$ is an elementary $2$-group and, if the identity component $\mathcal{D}_e$
has dimension $2$, it does not coincide with $Z(\mathcal{D})$. Then, looking at the lists in Sections 
\ref{sect:Dim1}, \ref{sect:Dim2NonComplex} and \ref{sect:Dim4}, we can see that there is a unique involution
$\varphi$ characterized by any of the following equivalent properties:
\begin{enumerate}
\item[$(i)$] $\varphi$ has a nonzero signature;
\item[$(ii)$] $\varphi$ has the maximal possible signature;
\item[$(iii)$] $X\varphi(X)\in\mathbb{R}_{>0}$ for all nonzero homogeneous $X\in\mathcal{D}$. 
\end{enumerate} 
This distinguished involution appears in items (1-a-1), (1-b-1), (1-c-1), (2-a-3), (2-b-3), (2-c-4), 
(3-a-1), (3-b-1) and (3-c-1).

Let us now turn to the case of Section \ref{sect:Dim2Complex}, that is, $\mathcal{D}\cong M_n(\mathbb{C})$ and 
$\Gamma$ is a division grading on $\mathcal{D}$ as a complex algebra, and consider involutions of the second kind. 
Then there is a unique isomorphism class, (2-f-2-0), of distinguished involutions $\varphi$
characterized by any of the following equivalent properties:
\begin{enumerate}
\item[$(i')$] $\varphi$ has a nonzero signature;
\item[$(ii')$] $\varphi$ has signature $\sqrt{|T_{[2]}|}$;
\item[$(iii')$] for any $t\in T$ of even order $o(t)$, we have that $\varphi(X)=X$ implies $X^{o(t)}\in\mathbb{R}_{>0}$ for all nonzero $X\in\mathcal{D}_t$. 
\end{enumerate} 
Note that the signature of distinguished involutions reaches the maximal possible value, $n$, if and only if 
$T$ is an elementary $2$-group. This latter condition is also necessary and sufficient for the uniqueness of 
a distinguished involution (see Remark \ref{rem:all_inv_2}). Moreover, if it is satisfied, then property $(iii')$ 
is equivalent to property $(iii)$. 

If $T$ is not an elementary $2$-group then the presence of a (fixed) distinguished involution $\varphi$ 
allows us to construct a special basis in the graded subalgebra
\[
\mathcal{D}^{[2]} := \bigoplus_{s\in T^{[2]}}\mathcal{D}_s.
\]
(If $T$ is an elementary $2$-group then $\mathcal{D}^{[2]}=\mathcal{D}_e=\mathbb{C}$.)
The construction is as follows. 

In each component $\mathcal{D}_t$, $t\in T$, we can find a nonzero element $X_t$ 
such that $\varphi(X_t)=X_t$, and this element is determined up to multiplication by a real scalar.
If $o(t)$ is odd, then we can scale $X_t$ so that $X_t^{o(t)}=1$, and this determines the element $X_t$ uniquely.
If $o(t)$ is even, then we can also scale $X_t$ so that $X_t^{o(t)}=1$ because $\varphi$ is distinguished, but 
such an element $X_t$ is unique only up to sign. For $t\in T^{[2]}$, we have a way to choose the sign, which is given 
by the following result.

\begin{lemma}
Fix an isomorphism $\mathcal{D}\cong\mathrm{End}_\mathbb{C}(V)$ and a hermitian form $h$ on $V$ that defines 
$\varphi$, that is, $\varphi=\sigma_h$ as in Equation \eqref{eq:inv_by_h}. 
For any $X\in\mathcal{D}$, set $h_X(v,w) := h(v,Xw)$ for all $v,w\in V$.
Then, for any $s\in T^{[2]}$, we have:
\begin{enumerate}
\item If $o(s)$ is odd, then 
(a) for any $t\in T$, $t^2=s$ implies $X_t^2=X_s$ and 
(b) the signature of $h_{X_s}$ equals the signature of $h$.
\item If $o(s)$ is even, then there exists $\epsilon\in\{\pm 1\}$ such that 
(a) for any $t\in T$, $t^2=s$ implies $X_t^2=\epsilon X_s$ and 
(b) the signature of $h_{X_s}$ equals the signature of $\epsilon h$.
\end{enumerate}
\end{lemma}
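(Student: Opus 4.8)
The plan is to reduce both statements to a single computation of the real scalar relating $X_t^2$ to $X_s$, and then to read off the signatures from one congruence identity. Throughout I write $\mathrm{sig}(\cdot)=m_+-m_-$ for the (signed) signature of a hermitian form, which is well defined here since $h$ is fixed. First I would record the reductions. Since $\varphi=\sigma_h$, the condition $\varphi(X)=X$ is exactly self-adjointness, $h(Xv,w)=h(v,Xw)$ for all $v,w\in V$, and for such $X$ the form $h_X$ is again hermitian (combine self-adjointness with $h(w,v)=\overline{h(v,w)}$). Because $\varphi$ is of the second kind, its restriction to the center $\mathbb{C}I$ is complex conjugation, so in each component $\mathcal{D}_t=\mathbb{C}X_t$ one has $\varphi(\lambda X_t)=\overline{\lambda}X_t$; hence the self-adjoint elements of $\mathcal{D}_t$ form the real line $\mathbb{R}X_t$. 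Now fix $t$ with $t^2=s$. As $\varphi$ is an antiautomorphism and $\varphi(X_t)=X_t$, we get $\varphi(X_t^2)=X_t^2$, and since $X_t^2\in\mathcal{D}_s$ is self-adjoint, $X_t^2=c\,X_s$ for a unique $c\in\mathbb{R}$.

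Next I would pin down $c$ up to sign using the power normalization. One checks $X_t^{2\,o(s)}=I$: indeed $t^{2\,o(s)}=s^{o(s)}=e$, so $X_t^{2\,o(s)}\in\mathbb{C}I$ is a power of $X_t$, and whether $o(t)=o(s)$ (when $o(t)$ is odd) or $o(t)=2\,o(s)$ (when $o(t)$ is even) the normalization $X_t^{o(t)}=I$ forces this power to be $I$. Raising $X_t^2=cX_s$ to the power $o(s)$ and using $X_s^{o(s)}=I$ then gives $c^{o(s)}=1$. Hence $c=1$ when $o(s)$ is odd, which proves (1a), and $c\in\{\pm1\}$ when $o(s)$ is even, which produces an $\epsilon:=c$ as required in (2a) except for the independence of $\epsilon$ from the choice of $t$.

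The decisive step is the identity $h_{X_t^2}(v,w)=h(v,X_t^2w)=h(X_tv,X_tw)$, obtained by moving one factor of $X_t$ across via its self-adjointness. The right-hand side is the pullback of $h$ along the $\mathbb{C}$-linear invertible map $X_t$, hence congruent to $h$, so by Sylvester's law $\mathrm{sig}(h_{X_t^2})=\mathrm{sig}(h)$. On the other hand $h_{X_t^2}=c\,h_{X_s}$ gives $\mathrm{sig}(h_{X_t^2})=\mathrm{sign}(c)\,\mathrm{sig}(h_{X_s})$. Combining, $\mathrm{sign}(c)\,\mathrm{sig}(h_{X_s})=\mathrm{sig}(h)$. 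Since $\varphi$ lies in class (2-f-2-0), its signature $\sqrt{|T_{[2]}|}$ is nonzero, so $\mathrm{sig}(h)\neq0$; therefore $c=\mathrm{sign}(c)=\mathrm{sig}(h)/\mathrm{sig}(h_{X_s})$ depends only on $s$ and not on $t$, which settles the independence in (2a). Finally, rearranging the displayed equality yields $\mathrm{sig}(h_{X_s})=c\,\mathrm{sig}(h)$, i.e. $\mathrm{sig}(h_{X_s})=\mathrm{sig}(h)$ in the odd case ($c=1$) and $\mathrm{sig}(h_{X_s})=\mathrm{sig}(\epsilon h)$ in the even case, proving (1b) and (2b).

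I expect the main obstacle to be the independence of $\epsilon$ from $t$ in (2a): a direct algebraic comparison of $X_t^2$ and $X_{t'}^2$ for $t'=tu$ with $u\in T_{[2]}$ would require tracking the normalizing scalars and the sign $\beta(u,t)$, which is awkward. The signature identity above circumvents this entirely, since it expresses $c$ through quantities depending only on $s$; the only points to be careful about are to use the signed signature of the fixed form $h$ and to invoke $\mathrm{sig}(h)\neq0$, which is guaranteed precisely because $\varphi$ is distinguished.
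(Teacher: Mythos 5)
Your proof is correct and follows essentially the same route as the paper's: you identify $X_t^2=\epsilon X_s$ via self-adjointness and the power normalization $X_t^{2\,o(s)}=I$, and you use the same key identity $h(v,X_t^2w)=h(X_tv,X_tw)$ to conclude that $h_{X_s}$ is isometric to $\epsilon h$. The paper settles the $t$-independence of $\epsilon$ in the remark immediately following the lemma by the very observation you make explicit, namely that condition (b) pins down $\epsilon$ because the distinguished involution has nonzero signature.
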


\begin{proof}
Suppose $t^2=s$. Since $X_t^2$ belongs to $\mathcal{D}_s$ and satisfies $\varphi(X_t^2)=X_t^2$ and $(X_t^2)^{o(s)}=1$, 
we have $X_t^2=\epsilon X_s$ where $\epsilon=1$ if $o(s)$ is odd and $\epsilon\in\{\pm 1\}$ if $o(s)$ is even.
Next, since $\varphi(X_t)=X_t$, we can write $h_{X_s}(v,w)=h(v,X_s w)=\epsilon h(X_t v, X_t w)$, which shows that
the hermitian forms $ \epsilon h $ and $h_{X_s}$ are isometric.
\end{proof}

Note that, since $\epsilon$ is determined by each of the conditions (a) and (b), it depends only on $X_s$, and 
neither on the choice of $t\in T$ satisfying $t^2=s$ nor on the choice of the isomorphism 
$\mathcal{D}\cong\mathrm{End}_\mathbb{C}(V)$ and hermitian form $h$. If $\epsilon=-1$, we replace $X_s$ by $-X_s$.
We have proved the existence and uniqueness of a basis $\{X_s \mid s\in T^{[2]}\}$ 
for the complex algebra $\mathcal{D}^{[2]}$ 
with the following properties: $\varphi(X_s)=X_s$, $X_s^{o(s)}=1$, and, for any $t\in T$ with $t^2=s$, we have 
that $\varphi(X)=X$ implies $X^2\in\mathbb{R}_{>0}X_s$ for all nonzero $X\in\mathcal{D}_t$.
We will refer to it as the \emph{distinguished basis}.
The following result gives explicit formulas for the products of the elements of the distinguished basis
of $\mathcal{D}^{[2]}$ and also for the quadratic Jordan operators of the basis elements of $\mathcal{D}$ acting on
the distinguished basis of $\mathcal{D}^{[2]}$.

\begin{proposition}
Let $\{X_s \mid s\in T^{[2]}\}$ be the distinguished basis of $\mathcal{D}^{[2]}$. Then:
\begin{enumerate}
\item $X_{u^2}X_{v^2}=\beta(u,v)^2 X_{u^2v^2}$ for all $u,v\in T$.
\item For any $t\in T$,
we have that $\varphi(X)=X$ implies $X X_s X\in\mathbb{R}_{>0}X_{st^2}$ 
for all $ s \in T^{[2]} $ and nonzero $X\in\mathcal{D}_t$.
In particular, if $X$ is scaled to satisfy $X^{o(t)}=1$ then $X X_s X=X_{st^2}$.
\end{enumerate}
\end{proposition}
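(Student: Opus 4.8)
The plan is to establish (2) first and then obtain (1) as a consequence, since the product $X_{u^2}X_{v^2}$ can be rewritten, via a commutation relation, in terms of the quadratic operator $Y\mapsto X_u Y X_u$ applied to $X_{v^2}$.

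First I would fix $t\in T$, a nonzero $X\in\mathcal{D}_t$ with $\varphi(X)=X$, and $s\in T^{[2]}$, and record the two elementary facts that localize the product. Because $T$ is abelian and $st^2\in T^{[2]}$, we have $XX_sX\in\mathcal{D}_{st^2}=\mathbb{C}X_{st^2}$; and because $\varphi$ is an antiautomorphism with $\varphi(X)=X$, $\varphi(X_s)=X_s$ and the two outer factors equal, $\varphi(XX_sX)=\varphi(X)\varphi(X_s)\varphi(X)=XX_sX$. A $\varphi$-fixed element of $\mathbb{C}X_{st^2}$ is a real multiple of $X_{st^2}$ (as $\varphi$ is conjugate-linear and fixes $X_{st^2}$), so $XX_sX=\lambda X_{st^2}$ with $\lambda\in\mathbb{R}$; since $X$ and $X_s$ are invertible, $\lambda\neq 0$. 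What remains is to pin down the sign of $\lambda$ and, under normalization, its value.

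The hard part will be the positivity $\lambda>0$, and this is exactly where the hypothesis that $\varphi$ is distinguished enters. Writing $\varphi=\sigma_h$ for a nonsingular hermitian form $h$ on $V$, and using $h_X(v,w)=h(v,Xw)$, I would exploit the identity $h_{XYX}(v,w)=h_Y(Xv,Xw)$, valid for self-adjoint $Y$ because $X$ is self-adjoint. As $X$ is invertible, this exhibits $h_{XYX}$ as isometric to $h_Y$. Taking $Y=X_s$, using $XX_sX=\lambda X_{st^2}$ and the linearity of $Y\mapsto h_Y$ in $Y$, we get that $\lambda\,h_{X_{st^2}}$ is isometric to $h_{X_s}$. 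By the preceding Lemma, with the sign convention built into the distinguished basis (so that $\epsilon=1$ throughout), every $h_{X_r}$ for $r\in T^{[2]}$ is isometric to $h$; hence $\lambda\,h_{X_{st^2}}$ is isometric to $h_{X_{st^2}}$. Finally, the signed signature of $h$ is the signature of the distinguished involution, which by item (2-f-2-0) equals $\sqrt{|T_{[2]}|}\neq 0$, and a nondegenerate hermitian form with nonzero signature cannot be isometric to a negative multiple of itself; therefore $\lambda>0$.

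For the value $\lambda=1$ under the normalization $X^{o(t)}=I$, I would pass to determinants. The relations $X^{o(t)}=I$, $X_s^{o(s)}=I$ and $X_{st^2}^{o(st^2)}=I$ force $X$, $X_s$ and $X_{st^2}$ to be diagonalizable with roots of unity as eigenvalues, so each has determinant of modulus $1$; taking determinants in $XX_sX=\lambda X_{st^2}$ and using $\det(\lambda M)=\lambda^{n}\det M$ on $M_n(\mathbb{C})$ gives $|\lambda|^{n}=1$, hence $\lambda=1$, proving (2). To deduce (1), choose for each $u$ a $\varphi$-fixed $X_u\in\mathcal{D}_u$ with $X_u^{o(u)}=I$, so that $X_u^2=X_{u^2}$, and apply the normalized case of (2) with $t=u$, $s=v^2$ to obtain $X_uX_{v^2}X_u=X_{u^2v^2}$. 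Moving one factor $X_u$ past $X_{v^2}$ by $X_{v^2}X_u=\beta(v^2,u)X_uX_{v^2}=\beta(u,v)^{-2}X_uX_{v^2}$ turns the left-hand side into $\beta(u,v)^{-2}X_{u^2}X_{v^2}$, and comparing yields $X_{u^2}X_{v^2}=\beta(u,v)^{2}X_{u^2v^2}$, as claimed.
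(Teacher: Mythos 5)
Your proof is correct, but it runs in the opposite direction from the paper's and replaces its key mechanism. The paper proves (1) first, purely algebraically: writing $X_uX_v=\lambda X_{uv}$ and applying $\varphi$ gives $X_vX_u=\bar{\lambda}X_{uv}$, hence $\beta(u,v)=\lambda^2$; computing $(X_uX_v)^2$ in two ways and using the sign convention $X_t^2=X_{t^2}$ (that is, $\epsilon=+1$) built into the distinguished basis then yields (1). Item (2) is reduced, since the $\varphi$-fixed elements of $\mathcal{D}_t$ are exactly the real multiples of $X_t$, to the single identity $X_tX_sX_t=X_{st^2}$, which follows from (1) and the commutation relations --- the same manipulation you perform at the end, run in reverse. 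You instead prove (2) directly: the localization $XX_sX\in\mathbb{R}X_{st^2}$ is the same elementary step, but for positivity you return to the hermitian form, re-running the isometry trick from the paper's Lemma at the level of the triple product and invoking the nonzero signature of the distinguished involution (class (2-f-2-0)); the normalized value $\lambda=1$ then follows from your determinant argument. The paper's route is shorter because the analytic content has already been packaged into the normalization $X_t^2=X_{t^2}$ of the basis, so no further appeal to $h$ or to signatures is needed; your route makes explicit where distinguishedness enters (positivity would genuinely fail for a signature-zero involution) and handles an arbitrary $\varphi$-fixed $X\in\mathcal{D}_t$ without first reducing it to a real multiple of a basis element. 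One point you should make explicit: the step ``choose $X_u$ with $X_u^{o(u)}=I$, so that $X_u^2=X_{u^2}$'' is a consequence rather than a choice --- it follows from the distinguished-basis property $X_u^2\in\mathbb{R}_{>0}X_{u^2}$ together with $(X_u^2)^{o(u^2)}=I=X_{u^2}^{o(u^2)}$, or, most cleanly, from your own item (2) applied with $s=e$.
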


\begin{proof}
Recall that we chose $X_t$ for all $t\in T$ such that $\varphi(X_t)=X_t$ and $X_t^{o(t)}=1$. 
Then $X_t^2=X_{t^2}$ for all $t\in T$. 

We have $X_u X_v = \lambda X_{uv}$ for some $\lambda\in\mathbb{C}$ with $|\lambda|=1$. 
Applying $\varphi$ to both sides of this equation, we get $X_v X_u=\bar{\lambda}X_{uv}$ and hence $\beta(u,v)=\lambda^2$.
Then, on the one hand, $(X_u X_v)^2=(\lambda X_{uv})^2=\lambda^2 X_{u^2v^2}=\beta(u,v)X_{u^2v^2}$ and,
on the other hand, $(X_u X_v)^2=X_u X_v X_u X_v=\beta(v,u) X_u^2 X_v^2=\beta(v,u)X_{u^2}X_{v^2}$. This proves (1).

For (2), it is necessary and sufficient to prove that $X_t X_s X_t = X_{st^2}$. 
Indeed, pick $u\in T$ such that $u^2=s$ and compute:
\[
X_t X_s X_t = \beta(t,s)X_s X_t^2 = \beta(t,s)X_s X_{t^2} = \beta(t,s)\beta(u,t)^2 X_{st^2}=X_{st^2},
\] 
where in the second last step we have used (1).
\end{proof}

\section*{Acknowledgements and license}

The authors are thankful to Alberto Elduque,
for numerous fruitful discussions about gradings
and mathematics in general,
and to the anonymous referee,
for taking the time to read in detail such a technical text
and suggest significant improvements of exposition.
Adri\'an also wants to thank Alberto
for his support as his thesis supervisor.

Yuri Bahturin was supported by
Discovery Grant 227060-2014
of the Natural Sciences and Engineering Research Council
(NSERC) of Canada.

Mikhail Kochetov was supported by
Discovery Grant 341792-2013
of the Natural Sciences and Engineering Research Council
(NSERC) of Canada
and a grant for visiting scientists by
Instituto Universitario de Matem\'aticas y Aplicaciones,
University of Zaragoza.

Adri\'an Rodrigo-Escudero was supported by
a doctoral grant of the Diputaci\'on General de Arag\'on.
His three-month stay
with Atlantic Algebra Centre
at Memorial University of Newfoundland
was supported by the
Fundaci\'on Bancaria Ibercaja and Fundaci\'on CAI
(reference number CB 4/15),
and by the NSERC of Canada
(Discovery Grant 227060-2014).
He was also supported by
the Spanish Mi\-nis\-te\-rio de Econom\'ia y Competitividad
and Fondo Europeo de De\-sa\-rro\-llo Regional FEDER
(MTM2013-45588-C3-2-P);
and by the Diputaci\'on General de Arag\'on
and Fondo Social Europeo
(Grupo de Investigaci\'on de \'Algebra).

This work is licensed under the
Creative Commons Attribution-NonCommercial-NoDerivatives 4.0 International License.
To view a copy of this license, visit
http://creativecommons.org/licenses/by-nc-nd/4.0/
or send a letter to
Creative Commons, PO Box 1866, Mountain View, CA 94042, USA.

\end{document}